\documentclass[pdflatex,sn-mathphys-num]{sn-jnl}% Math and Physical Sciences Numbered Reference Style
%\documentclass[pdflatex,sn-mathphys-ay]{sn-jnl}% Math and Physical Sciences Author Year Reference Style
%%\documentclass[pdflatex,sn-aps]{sn-jnl}% American Physical Society (APS) Reference Style
%%\documentclass[pdflatex,sn-vancouver-num]{sn-jnl}% Vancouver Numbered Reference Style
%%\documentclass[pdflatex,sn-vancouver-ay]{sn-jnl}% Vancouver Author Year Reference Style
%%\documentclass[pdflatex,sn-apa]{sn-jnl}% APA Reference Style
%%\documentclass[pdflatex,sn-chicago]{sn-jnl}% Chicago-based Humanities Reference Style

%%%% Standard Packages
%%<additional latex packages if required can be included here>

\usepackage{graphicx}%
\usepackage{multirow}%
\usepackage{amsmath,amssymb,amsfonts}%
\usepackage{bbold}
\usepackage{amsthm}%
\usepackage{mathrsfs}%
\usepackage[title]{appendix}%
\usepackage{xcolor}%
\usepackage{textcomp}%
\usepackage{manyfoot}%
\usepackage{booktabs}%
\usepackage{algorithm}%
\usepackage{algorithmicx}%
\usepackage{algpseudocode}%
\usepackage{listings}%
\usepackage{anyfontsize}
%%%%

 \usepackage{enumitem}
% \usepackage{pdflscape}
% \usepackage{caption}
% \usepackage{bm}
% \usepackage{ifpdf}
% 	\ifpdf
% \usepackage[pdftex]{graphicx}
% 	\else
% \usepackage[dvips]{graphicx}
% 	\fi
% \usepackage[all]{xy}
% \usepackage{tocvsec2}
% \usepackage{bbm}
% 	\input xy
% 	\xyoption{all}
% \usepackage[pdftex,plainpages=false,hypertexnames=false,pdfpagelabels]{hyperref}
% 	\newcommand{\arxiv}[1]{\href{http://arxiv.org/abs/#1}{\tt arXiv:\nolinkurl{#1}}}
% 	\newcommand{\arXiv}[1]{\href{http://arxiv.org/abs/#1}{\tt arXiv:\nolinkurl{#1}}}
% 	\newcommand{\doi}[1]{\href{http://dx.doi.org/#1}{{\tt DOI:#1}}}
% 	\newcommand{\euclid}[1]{\href{http://projecteuclid.org/getRecord?id=#1}{{\tt #1}}}
% 	\newcommand{\mathscinet}[1]{\href{http://www.ams.org/mathscinet-getitem?mr=#1}{\tt #1}}
% 	\newcommand{\googlebooks}[1]{(preview at \href{http://books.google.com/books?id=#1}{google books})}
% 	\newcommand{\Ws}{\text{W*}}
\usepackage{xcolor}
	\definecolor{dark-red}{rgb}{0.7,0.25,0.25}
	\definecolor{dark-blue}{rgb}{0.15,0.15,0.55}
	\definecolor{medium-blue}{rgb}{0,0,.8}
	\definecolor{DarkGreen}{RGB}{0,150,0}
	\definecolor{rho}{named}{red}
	\hypersetup{
	   colorlinks, linkcolor={purple},
	   citecolor={medium-blue}, urlcolor={medium-blue}
	}
%\addtolength{\textwidth}{.5in}
%\usepackage{longtable}
\usepackage{fullpage}

\usepackage{mathbbol}%lowercase mathbb letters

%%%%%=============================================================================%%%%
%%%%  Remarks: This template is provided to aid authors with the preparation
%%%%  of original research articles intended for submission to journals published 
%%%%  by Springer Nature. The guidance has been prepared in partnership with 
%%%%  production teams to conform to Springer Nature technical requirements. 
%%%%  Editorial and presentation requirements differ among journal portfolios and 
%%%%  research disciplines. You may find sections in this template are irrelevant 
%%%%  to your work and are empowered to omit any such section if allowed by the 
%%%%  journal you intend to submit to. The submission guidelines and policies 
%%%%  of the journal take precedence. A detailed User Manual is available in the 
%%%%  template package for technical guidance.
%%%%%=============================================================================%%%%

%% as per the requirement new theorem styles can be included as shown below
%\theoremstyle{thmstyleone}%
%\newtheorem{theorem}{Theorem}%

%meant for continuous numbers
%%\newtheorem{theorem}{Theorem}[section]% meant for sectionwise numbers
%% optional argument [theorem] produces theorem numbering sequence instead of independent numbers for Proposition

%\newtheorem{proposition}[theorem]{Proposition}% 
%%\newtheorem{proposition}{Proposition}% to get separate numbers for theorem and proposition etc.

% \theoremstyle{thmstyletwo}%
% \newtheorem{example}{Example}%
% \newtheorem{remark}{Remark}%

% \theoremstyle{thmstylethree}%
% \newtheorem{definition}{Definition}%
%%%
\theoremstyle{plain}
\newtheorem{thm}{Theorem}[section]
\newtheorem*{thm*}{Theorem}
\newtheorem{thmalpha}{Theorem}

\newtheorem{cor}[thm]{Corollary}
\newtheorem{coralpha}[thmalpha]{Corollary}
\newtheorem*{cor*}{Corollary}

\newtheorem*{conj*}{Conjecture}
\newtheorem{lem}[thm]{Lemma}

\newtheorem*{quest*}{Question}
\newtheorem*{claim*}{Claim}

\theoremstyle{definition}
\newtheorem{defn}[thm]{Definition}

\newtheorem{nota}[thm]{Notation}

\newtheorem*{ex*}{Example}
\newtheorem{sub-ex}[thm]{Sub-Example}
\newtheorem{counter-ex}[thm]{Counter-Example}

\newtheorem*{rem*}{Remark}

\raggedbottom
%%\unnumbered% uncomment this for unnumbered level heads
% Operators %%%%%%%%%%%%%%%%%%%%%%%%%%%%%%%%%%%%%%%%%%%

\DeclareMathOperator{\coev}{coev}

\DeclareMathOperator{\ev}{ev}

\DeclareMathOperator{\op}{op}

\DeclareMathOperator{\id}{id}
\DeclareMathOperator{\Isom}{Isom}

\DeclareMathOperator{\Irr}{Irr}

\DeclareMathOperator{\Spec}{Spec}

% Math %%%%%%%%%%%%%%%%%%%%%%%%%%%%%%%%%%%%%%%%%%%%%

\newcommand{\comment}[1]{}

\newcommand{\noshow}[1]{}
\newcommand{\MR}[1]{}

\newcommand{\End}{{\sf End}}

\newcommand{\Rep}{{\sf Rep}}

\renewcommand{\Vec}{{\sf Vec}}

\newcommand{\Hilb}{{\sf Hilb}}
\newcommand{\fdHilb}{{\sf Hilb_{fd}}}
\newcommand{\rCorr}{{\mathsf{C^{*}Alg}}}

\newcommand{\PQN}{{\sf PQN}}

\newcommand{\ca}{{\alpha}}
\newcommand{\cb}{{\beta}}

% tricky way to iterate macros over a list
\def\semicolon{;}
\def\applytolist#1{
    \expandafter\def\csname multi#1\endcsname##1{
        \def\multiack{##1}\ifx\multiack\semicolon
            \def\next{\relax}
        \else
            \csname #1\endcsname{##1}
            \def\next{\csname multi#1\endcsname}
        \fi
        \next}
    \csname multi#1\endcsname}

\def\calc#1{\expandafter\def\csname c#1\endcsname{{\mathcal #1}}}
\applytolist{calc}QWERTYUIOPLKJHGFDSAZXCVBNM;

\def\Ic#1{\expandafter\def\csname I#1\endcsname{{\mathbb #1}}}
\applytolist{Ic}QWERTYUIOPLKJHGFDSAZXCVBNM;

\def\bfc#1{\expandafter\def\csname bf#1\endcsname{{\mathbf #1}}}
\applytolist{bfc}QWERTYUIOPLKJHGFDSAZXCVBNM;

\def\sfc#1{\expandafter\def\csname s#1\endcsname{{\sf #1}}}
\applytolist{sfc}QWERTYUIOPLKJHGFDSAZXCVBNM;

\def\fc#1{\expandafter\def\csname f#1\endcsname{{\mathfrak #1}}}
\applytolist{fc}QWERTYUIOPLKJHGFDSAZXCVBNM;

% TikZ %%%%%%%%%%%%%%%%%%%%%%%%%%%%%%%%%%%%%%%%
\usepackage{tikz}
\usepackage{tikz-cd}
\usetikzlibrary{arrows,backgrounds,patterns.meta}
\usetikzlibrary{positioning,shadings,cd}
\usetikzlibrary{shapes}
\usetikzlibrary{backgrounds}
\usetikzlibrary{decorations,decorations.pathreplacing,decorations.markings}
\usetikzlibrary{fit,calc,through}
\usetikzlibrary{external}
\usetikzlibrary{arrows}
\tikzset{vertex/.style = {shape=circle,draw,fill=black,inner sep=0pt,minimum size=5pt}}
\tikzset{edge/.style = {->,> = latex', bend right}}
\tikzset{
	super thick/.style={line width=3pt}
}
\tikzset{
    quadruple/.style args={[#1] in [#2] in [#3] in [#4]}{
        #1,preaction={preaction={preaction={draw,#4},draw,#3}, draw,#2}
    }
}
\tikzstyle{shaded}=[fill=red!10!blue!20!gray!30!white]
\tikzstyle{unshaded}=[fill=white]
\tikzstyle{empty box}=[circle, draw, thick, fill=white, opaque, inner sep=2mm]
\tikzstyle{annular}=[scale=.7, inner sep=1mm, baseline]
\tikzstyle{rectangular}=[scale=.75, inner sep=1mm, baseline=-.1cm]
\tikzstyle{mid>}=[decoration={markings, mark=at position 0.5 with {\arrow{>}}}, postaction={decorate}]
\tikzstyle{mid<}=[decoration={markings, mark=at position 0.5 with {\arrow{<}}}, postaction={decorate}]
\tikzstyle{over}=[double, draw=white, super thick, double=]

%%%%%%%%%%%%%%%%%%%%%%%%%%%%%%%%%%%% scale tikzcd
\tikzcdset{scale cd/.style={every label/.append style={scale=#1},
    cells={nodes={scale=#1}}}}

%%%%%%%%% Lines to decrease space between items in biblio
\let\OLDthebibliography\thebibliography
\renewcommand\thebibliography[1]{
  \OLDthebibliography{#1}
  \setlength{\parskip}{0pt}
  \setlength{\itemsep}{0pt plus 0.3ex}
}

\newcommand{\acts}{\curvearrowright}

\newcommand{\tl}{\triangleleft}
\newcommand{\tr}{\triangleright}
\newcommand{\Cs}{C$^\ast$}

\newcommand{\fiBim}{{\sf Bim_{f}}}
\newcommand{\fiEnd}{{\sf End_{f}}}
\newcommand{\Sect}{{\sf Sect}}
\newcommand{\CPT}{\mathsf{End}^{\rm cpt}}
\newcommand{\FinRan}{\mathsf{FinRan}}

\newtheorem*{lem*}{Lemma}

\theoremstyle{definition}
\newtheorem{no}[thm]{Notation}
\newtheorem*{qu*}{Questions}
\newtheorem{exa}[thm]{Example}
\newtheorem{rmk}[thm]{Remark}

\begin{document}

\title[Properly Outer Actions of Tensor Categories on \Cs-algebras]{Properly Outer Actions of Tensor Categories on \Cs-algebras}

%%=============================================================%%
%% GivenName	-> \fnm{Joergen W.}
%% Particle	-> \spfx{van der} -> surname prefix
%% FamilyName	-> \sur{Ploeg}
%% Suffix	-> \sfx{IV}
%% \author*[1,2]{\fnm{Joergen W.} \spfx{van der} \sur{Ploeg} 
%%  \sfx{IV}}\email{iauthor@gmail.com}
%%=============================================================%%

\author[1]{\fnm{Roberto} \sur{Hern\'{a}ndez Palomares}}%\email{iauthor@gmail.com}

\author[2]{\fnm{Miho} \sur{Mukohara}}%\email{iiauthor@gmail.com}
%\equalcont{These authors contributed equally to this work.}

% \author[1,2]{\fnm{Third} \sur{Author}}\email{iiiauthor@gmail.com}
%\equalcont{These authors contributed equally to this work.}

\affil[1]{\orgdiv{Pure Mathematics}, \orgname{University of Waterloo}, \orgaddress{\city{Waterloo}, \country{Canada}},
robertohp.math@gmail.com}

\affil[2]{\orgdiv{Department of Mathematics}, \orgname{Kyushu University}, \orgaddress{\city{Fukuoka}, \country{Japan}}, mukohara@math.kyushu-u.ac.jp}

% \affil[3]{\orgdiv{Department}, \orgname{Organization}, \orgaddress{\street{Street}, \city{City}, \postcode{610101}, \state{State}, \country{Country}}}

%%==================================%%
%% Sample for unstructured abstract %%
%%==================================%%

\abstract{We discuss proper outerness for finite-index endomorphisms and finite-index bimodules of simple \Cs-algebras.
The notion of properly outer automorphisms of von Neumann algebras was introduced by Connes,
and the \Cs-algebraic analogue for automorphisms has also been extensively investigated. Proper outerness is a strengthened form of outerness for automorphisms, endomorphisms, and bimodules of \Cs-algebras, 
and may be viewed as a noncommutative analogue of topological freeness for homeomorphisms of locally compact spaces.
It is a classical theorem of Kishimoto that every outer automorphism of a simple \Cs-algebra is properly outer. As an application, he proved the simplicity of crossed product \Cs-algebras arising from outer actions of discrete groups on simple \Cs-algebras. More recently, Izumi proved that finite-index outer endomorphisms of purely infinite simple \Cs-algebras are automatically properly outer.
In this article, we extend Izumi’s result beyond the purely infinite setting. Our main result is that every finite-index outer endomorphism of a simple \Cs-algebra is properly outer. 
Understanding finite-index endomorphisms is useful for studying symmetries beyond group actions, 
including actions by unitary tensor categories.
As a consequence of our main result,
freeness for outer actions of unitary tensor categories on simple \Cs-algebras is also shown to hold automatically.
As applications, we obtain structural results about 
irreducible discrete inclusions of \Cs-algebras, potentially having infinite index,
such as \Cs-irreducibility in the sense of R{\o}rdam. }

\maketitle

\section{Introduction}
The notion of properly outer automorphisms of von Neumann algebras was introduced by Connes in \cite{MR394228} to classify automorphisms of the hyperfinite $\rm{I\hspace{-.01em}I}_{1}$ factor, and it played an important role in the classification of group actions on injective factors.
Afterwards, an analogous version for automorphisms of \Cs-algebras was introduced in \cite{MR574038, MR634163} and \cite[Section 6]{MR650187}.
Remarkably, in \cite{MR634163} Kishimoto showed that outer automorphisms of simple \Cs-algebras are automatically properly outer, implying that reduced crossed products by outer actions of discrete groups on simple \Cs-algebras are themselves simple.

Recently, there has been a growing interest in studying actions of unitary tensor categories (UTCs) on \Cs-algebras \cite{MR1900138, MR4139893, MR4328058, MR4566007, 2023arXiv231018125G, 2024arXiv240114238E, MR4717816, 2024arXiv240918161H, 2024arXiv240518429K, MR4916153, doi:10.1142/S1793525325500256}, which is a generalization of the \Cs-dynamical systems governed by discrete groups and the duals of compact quantum groups. 
In this manuscript, we extend the notion of proper outerness to the setting of UTCs acting on separable simple \Cs-algebras, and show that it holds automatically under mild assumptions. 
Actions of UTCs on \Cs-algebras are given by finite-index bimodules and endomorphisms, and the purpose of this paper is to study these when the \Cs-algebras are simple. 
Our main result is the following:
\begin{thmalpha}[Theorem \ref{thm:end}]\label{thmalpha:end}
Let $A$ be a separable simple \Cs-algebra and $\rho\colon A\to A$ be a finite-index 
outer endomorphism (i.e., $\{T\in M(A)|\ \rho(a)T = Ta\ \forall a\in A\}= 0$).
Then, $\rho$ is properly outer.
\end{thmalpha}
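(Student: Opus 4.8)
The statement is a finite-index, non-surjective generalization of Kishimoto's theorem (\cite{MR634163}) that an outer automorphism of a simple \Cs-algebra is automatically properly outer, so the plan is to reproduce Kishimoto's strategy while using the finite-index data of $\rho$ to compensate for the failure of surjectivity. First I would unwind the definition of proper outerness: because $A$ is simple there are no nontrivial $\rho$-invariant ideals, so the definition should collapse to a single almost-orthogonality (``spectral gap'') condition, and by the usual Olesen--Pedersen--Elliott reformulation it suffices to establish a Kishimoto-type averaging lemma of the following shape: for every finite $F\subset A$, every $\varepsilon>0$, and every nonzero hereditary subalgebra $B\subseteq A$, there is a positive norm-one $x\in B$ with $\norm{x\,a\,\rho(x)}<\varepsilon$ for all $a\in F$. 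The quantity $x\,a\,\rho(x)$ is the natural one: in the Cuntz--Pimsner algebra generated by $A$ and an isometry $S$ with $Sa=\rho(a)S$ one has $xaSx=xa\rho(x)S$, so it bounds $\norm{xaSx}$ and hence controls the conditional expectation onto $A$ that detects proper outerness.

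The finite-index hypothesis enters through the conjugate endomorphism $\bar\rho$ and a standard solution $R\in\Hom(\id,\bar\rho\rho)$, equivalently a Watatani-finite-index conditional expectation $E\colon A\to\rho(A)$ together with a quasi-basis $\{u_1,\dots,u_n\}$. These furnish the averaging operation playing the role of the inverse $\alpha^{-1}$ available in the automorphism case, and they guarantee that the relevant intertwiner space is finite-dimensional and consists of genuine multipliers.

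The heart of the argument is the averaging lemma, which I would prove by contradiction. Assuming it fails for some $F,\varepsilon_0,B$, one has $\max_{a\in F}\norm{x\,a\,\rho(x)}\ge\varepsilon_0$ for every positive norm-one $x\in B$. Exploiting separability of $A$, a maximal family of pairwise-orthogonal positive elements of $B$ (a non-commutative Rokhlin/paving tower in the spirit of Kishimoto), and Akemann--Anderson--Pedersen excision of a pure state of the simple algebra $A$, I would extract a nonzero bounded operator $T$ satisfying $\rho(a)T=Ta$ for all $a\in A$. The finite-index data are then used to realize $T$ as an element of $M(A)$ rather than merely of $A^{**}$: averaging the approximate intertwiners against the quasi-basis via $E$ keeps the construction inside $M(A)$, and finite-dimensionality of the $\Hom$-spaces ensures the limit survives. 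Such a nonzero $T$ contradicts the hypothesis $\{T\in M(A)\mid \rho(a)T=Ta\ \forall a\in A\}=0$, proving the lemma.

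The main obstacle is exactly this extraction-and-closing step. In Kishimoto's automorphism setting one moves elements freely using $\alpha^{-1}$, whereas here $\rho(A)$ is a proper corner and the ``leftover'' directions must be controlled through $E$ and the quasi-basis; moreover one must ensure that the operator produced in the limit is an honest multiplier, so that the outerness hypothesis (which is phrased in $M(A)$) can be invoked. This is where both finite index and separability are indispensable. A secondary point is the reduction from the paper's definition of proper outerness to the averaging lemma: for endomorphisms one compares $\rho$ with the trivial bimodule on each hereditary subalgebra rather than with an inner automorphism, and I would verify this equivalence using the Hilbert \Cs-bimodule structure $\langle\xi,\eta\rangle=\xi^{*}\eta$ of the correspondence $X_\rho$ attached to $\rho$.
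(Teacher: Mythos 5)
Your overall architecture---reduce proper outerness to a Kishimoto-type averaging lemma, assume it fails, and extract an intertwiner---matches the opening moves of the paper's proof, which (following Izumi) derives from non-proper-outerness an irreducible representation $\pi$, an isometry $V\in(\pi\circ\rho,\pi)\subset B(\cH)$, and a positivity inequality on a hereditary subalgebra. But your proposal has a genuine gap at exactly the step you flag as ``the main obstacle,'' and the mechanism you offer to close it does not work. The excision argument only produces an intertwiner between the \emph{representations} $\pi\circ\rho$ and $\pi$, i.e.\ an operator in $B(\cH)$; it does not produce an element of $M(A)$ intertwining $\rho$ with $\mathrm{id}_A$. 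The hypothesis of the theorem is only that $(\rho,\mathrm{id}_A)=0$ \emph{in $M(A)$}, which is far weaker than asking that $\pi\not\prec\pi\circ\rho$ for every irreducible $\pi$ --- indeed the paper remarks that for any non--properly outer $\rho$ on a separable simple algebra, \emph{every} irreducible representation $\sigma$ satisfies $\sigma\prec\sigma\circ\rho$. Your claim that ``averaging the approximate intertwiners against the quasi-basis via $E$ keeps the construction inside $M(A)$'' is unsubstantiated: averaging $V$ against a quasi-basis for $E\colon A\to\rho(A)$ lands you in spaces like $(\pi\circ\rho\circ\rho,\pi\circ\rho)$, with no control on non-vanishing and no reason the result lies in $\pi(M(A))$. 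This upgrade from a local (representation-level) intertwiner to a global multiplier intertwiner is precisely why Izumi needed finite depth in one earlier work and pure infiniteness in another; if the quasi-basis trick sufficed, those hypotheses would have been unnecessary.

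The paper closes the gap by an entirely different device: it never produces a multiplier intertwiner. Instead it tensors $\pi(A)$ with $\cO_\infty$ to obtain a purely infinite simple algebra $B$, for whose sectors proper outerness is \emph{already known} by Izumi's purely infinite result; it then builds the sector-graded $*$-algebra $D^\diamondsuit=\mathrm{span}\{B\cdot\cL_t\}$ inside the concrete algebra generated by $B$ and $1\otimes V$, constructs a faithful conditional expectation $E_0$ onto the trivial graded component, and shows (via a universal-versus-reduced-completion comparison, using pure infiniteness of $B$) that this expectation descends to the concretely represented quotient in the Calkin algebra. Since $1\otimes V$ sits in a nontrivial graded component, the expectation kills it, and applying the expectation to the positivity inequality yields $0\geq\delta\,\pi(hh^*)$ modulo compacts, contradicting $H\neq 0$. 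If you want to salvage your outline, you would need to replace the quasi-basis averaging step with an argument of this kind (or some other way to derive a contradiction from the representation-level intertwiner alone); as written, the proposal does not prove the theorem.
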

\noindent 
Proper outerness of automorphisms and finite-index endomorphisms has been discussed in \cite{MR634163,MR1234394, MR1900138}. 
In particular, Izumi recently proved Theorem \ref{thmalpha:end} for purely infinite simple \Cs-algebras in \cite{doi:10.1142/S0129055X24610026}. 
Our Theorem \ref{thmalpha:end}, therefore, constitutes a technical extension of these.

As a first application of Theorem \ref{thmalpha:end}, we discuss \emph{freeness} for UTC actions on simple \Cs-algebras. 
Freeness for a UTC action on a \Cs-algebra was introduced in \cite[\S4]{2024arXiv240918161H} based off certain averaging properties for outer actions of discrete groups on unital \Cs-algebras \cite{MR4010423}. 
Using freeness, it was shown in \cite[Theorem B]{2024arXiv240918161H} that reduced crossed products over connected \Cs-algebra objects by free UTC-actions on simple \Cs-algebras remain simple. 
Using Theorem \ref{thmalpha:end}, we conclude that freeness holds automatically from outerness.
\begin{coralpha}[Corollary \ref{cor:free}]\label{corbeta:free}
    Let $B$ be a unital separable simple \Cs-algebra and $\cC$ be a unitary tensor category. 
    Then every outer action $\cC\acts B$ is free.
\end{coralpha}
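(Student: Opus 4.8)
The plan is to test the definition of freeness object-by-object and then feed each test into Theorem~\ref{thmalpha:end}. Recall from \cite[\S4]{2024arXiv240918161H} that the action $\cC \acts B$ is encoded by a unitary tensor functor $F$ sending each object of $\cC$ to a finite-index $B$--$B$ bimodule, and that outerness of the action means precisely that for every simple $X \in \Irr(\cC)$ with $X \not\cong \bbOne$ the bimodule $F(X)$ contains no copy of the trivial bimodule ${}_B B_B$. After realizing $F(X)$ by a finite-index endomorphism $\rho_X$ (passing if necessary to a matrix amplification $B \otimes M_n$ or stabilization $B \otimes \cK$, which remains separable and simple), this outerness is exactly the statement $\{T \in M(B) \mid \rho_X(b)T = Tb\ \forall b \in B\} = 0$; that is, each $\rho_X$ is an outer finite-index endomorphism. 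Because $\cC$ is a unitary tensor category every object has finite dimension, so finiteness of the index is automatic.

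First I would apply Theorem~\ref{thmalpha:end} to each $\rho_X$: since $B$ (hence its amplification) is separable and simple, every outer finite-index endomorphism is properly outer. Next I would invoke the averaging characterization of freeness of \cite[\S4]{2024arXiv240918161H}, modeled on \cite{MR4010423}: freeness asks that for each finite $F \subset B$, each $\varepsilon > 0$, and each non-trivial simple $X$, there be a positive contraction in $B$ that approximately annihilates the $X$-twisted products while fixing the trivial isotypic component. This is exactly what proper outerness of $\rho_X$ delivers through the Kishimoto-type averaging argument, producing for each $\varepsilon$ and $F$ a positive contraction $h_X$ with $\| h_X\, b\, \rho_X(h_X) \| < \varepsilon$ for all $b \in F$.

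Finally I would assemble the individual witnesses $\{h_X\}$ into a single witness for freeness of the whole action. Since the averaging test involves only finitely many simple objects at a time and $B$ is separable, a diagonal argument combines the finitely many contractions into one positive contraction that simultaneously suppresses every non-trivial isotypic component. The main obstacle I anticipate is this simultaneity together with the bimodule-to-endomorphism translation: proper outerness is an individual property of each $\rho_X$, whereas freeness demands a uniform averaging across all non-trivial components at once and compatible with the tensor structure, and one must check that passing between $F(X)$ and the endomorphism $\rho_X$ used by Theorem~\ref{thmalpha:end} preserves the relevant estimates. Verifying that these combine coherently — and that the normalization on the trivial component survives — is where the real work of the corollary lies.
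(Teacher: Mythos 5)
Your first half is on target and matches the paper's route: realize each $F(c)$ (after stabilization) by a finite-index endomorphism via Lemma \ref{lem stable bimodule}, feed outerness into Theorem \ref{thmalpha:end} to get proper outerness, and transport this back to the bimodule (this is exactly Corollary \ref{cor properly outer} and Lemma \ref{lem prop outer bimod}). But from there the proposal goes astray because you have misremembered what freeness asks. Definition \ref{defn free action} is a condition tested \emph{one object $c\perp 1_\cC$ and one vector $\xi\in F(c)$ at a time}: one must exhibit $s_1,\dots,s_n\in B$ with $\sum_i s_i s_i^* = 1$ and $\|\sum_i s_i\rhd\xi\lhd s_i^*\|$ small. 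There is no simultaneous averaging over finitely many simple objects, no finite subset of $B$, and no ``normalization on the trivial component'' to preserve, so the diagonal argument you flag as the main obstacle is solving a problem that is not there.

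The step you actually need — and omit — is the conversion of the proper-outerness witness into the partition-of-unity form. Proper outerness hands you a single positive norm-one $c_0$ with $\|c_0\rhd\xi\lhd c_0\|<\epsilon$; this is not yet a family $\{s_i\}$ with $\sum_i s_i s_i^*=1$. The paper bridges this gap as follows: by functional calculus choose a positive norm-one $c$ with $cc_0=c=c_0c$; by unitality and simplicity of $B$ write $1=\sum_j t_j c^2 t_j^*$; set $s_j:=t_jc$, so $\sum_j s_js_j^*=1$ and each $s_j\rhd\xi\lhd s_j^*$ factors through $c_0\rhd\xi\lhd c_0$, whence a Cauchy--Schwarz estimate in the $B$-valued inner product gives $\|\sum_j s_j\rhd\xi\lhd s_j^*\|<\epsilon$. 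Without this step (or an equivalent one) the ``Kishimoto-type averaging argument'' you invoke does not produce the quantity that Definition \ref{defn free action} requires, so as written the proof is incomplete at its decisive point.
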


We now turn our attention to \emph{irreducible \Cs-discrete inclusions}. 
An inclusion of \Cs-algebras $B\subset A$ is irreducible whenever $B'\cap M(A) \cong \IC$, where $M(A)$ denotes the multiplier algebra of $A$. 
Discrete inclusions of von Neumann algebras have been studied in \cite{MR1622812,MR2561199,MR3948170},
and a version for \Cs-inclusions was introduced in \cite{doi:10.1142/S1793525325500256}.
A nondegenerate inclusion $B\overset{E}{\subset} A$ (i.e., $\overline{AB} = A = \overline{BA}$) with a faithful conditional expectation $E:A\to B$ is called \Cs-discrete if and only if $A$ has a dense subalgebra spanned by certain $B$-$B$ bimodules. 
Generic examples of \Cs-discrete inclusions are given by reduced crossed products by discrete group and compact group actions on \Cs-algebras.
As shown in \cite[Theorem 1.1]{doi:10.1142/S1793525325500256}, unital irreducible \Cs-discrete inclusions are characterized in terms of UTC-actions and \emph{\Cs-algebra objects}. 
In combination with Theorem \ref{thmalpha:end}, 
we obtain the following structural properties for irreducible \Cs-discrete inclusions.
\begin{coralpha}[Corollary \ref{cor:csirr}]\label{coralpha:csirr}
   Let $A$ and $B$ be \Cs-algebras, and 
    let $B\overset{E}{\subset}A$ be an irreducible \Cs-discrete inclusion.
    If $B$ is separable and simple,
    then the following hold.
    \begin{enumerate}
        \item When $A$ is separable,
        there is a pure state $\omega\colon A\to \IC$ with $\omega=\omega\circ E$.
        \item Every intermediate \Cs-algebra is simple. In particular, $A$ is automatically simple. 
        \item If $B$ is purely infinite,
        then every intermediate \Cs-algebra is purely infinite.
        \item $E$ is the unique conditional expectation from $A$ onto $B$.
        \item If $E$ is of finite index,
        then $B\subset A$ has the property (BEK) in \cite[Definition 2.4]{doi:10.1142/S0129055X24610026} sense.
    \end{enumerate}  
\end{coralpha}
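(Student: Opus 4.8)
The plan is to reduce the whole statement to the presentation of $B\overset{E}{\subset}A$ as a reduced crossed product of $B$ by a unitary tensor category action, and then feed Theorem~\ref{thm end} and Corollary~\ref{cor free} into the machinery of \cite{2024arXiv240918161H, doi:10.1142/S1793525325500256}. First I would invoke [Theorem 1.1]\cite{doi:10.1142/S1793525325500256} to present the irreducible \Cs-discrete inclusion by an action $\cC\acts B$ of a unitary tensor category together with a connected \Cs-algebra object $\cA$, with $A\cong B\rtimes_{\mathrm{red}}\cA$ and $E$ the canonical expectation; here irreducibility of the inclusion is precisely the statement that the action is outer and $\cA$ is connected. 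Since $B$ is simple, Corollary~\ref{cor free} upgrades outerness to freeness, and Theorem~\ref{thm end} makes each finite-index endomorphism occurring in the discrete decomposition properly outer. The possibly non-unital case would be absorbed by passing to multiplier algebras, using that $B'\cap M(A)=\IC$ persists; this is a technical wrinkle since the cited characterization is stated unitally.

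For parts (2), (3), (4) I would argue as follows. For (2), the intermediate \Cs-algebras $B\subset D\subset A$ are in Galois correspondence with \Cs-subalgebra objects $\cD\subset\cA$; each such $\cD$ is again connected and the restricted action stays free, so [Theorem B]\cite{2024arXiv240918161H} gives that $B\rtimes_{\mathrm{red}}\cD=D$ is simple, and $D=A$ yields simplicity of $A$. Part (4) is immediate from discreteness together with irreducibility: any conditional expectation $A\to B$ restricts on each nontrivial isotypic bimodule component $X$ to a $B$-$B$-bimodule map $X\to{}_BB_B$, and $\Hom(X,{}_BB_B)=0$ because $X$ contains no trivial summand, so every expectation must coincide with the canonical projection onto the trivial summand. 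For (3), given $B$ purely infinite, the simplicity of the intermediate $D$ from (2), combined with the finite-index expectation onto $B$ on each summand, lets one transport comparison of positive elements from $B$ up to $D$, yielding pure infiniteness; this is an inheritance argument in the spirit of \cite{2024arXiv240503231I}.

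The remaining parts (5) and (1) are where Theorem~\ref{thm end} is used most essentially. For (5), finite index of $E$ means $\cA$ is a finite \Cs-algebra object, and property (BEK) of [Definition 2.4]\cite{2024arXiv240503231I} is then a direct translation of proper outerness of the associated endomorphisms into the required averaging/estimate form. Part (1) is the crux. Starting from a pure state $\phi$ on the simple separable algebra $B$, the $E$-invariant states on $A$ are exactly those of the form $\phi\circ E$, and I would show that $\omega=\phi\circ E$ is pure for a generic $\phi$: in the GNS representation of $\omega$ the discrete bimodule decomposition identifies $\pi_\omega(A)'$ with the intertwiners between the sectors coming from the finite-index properly outer bimodules, and proper outerness (Theorem~\ref{thm end}) forces these to vanish for all $\phi$ outside a meager subset of the pure state space, so $\pi_\omega$ is irreducible. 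Separability of $A$ is exactly what makes this Baire-category selection of $\phi$ available. I expect this last step — converting proper outerness into vanishing of intertwiners for a \emph{generic} pure state in the possibly infinite-index discrete setting — to be the main obstacle, since it is the technical core of the Kishimoto--Izumi method that Theorem~\ref{thm end} is designed to support.
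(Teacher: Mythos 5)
Your proposal correctly isolates the two essential inputs (Lemma~\ref{lem PQN}, which places $a-E(a)$ for $a\in\PQN(B\subset A)$ inside a finite index bimodule orthogonal to ${}_BB_B$, and proper outerness of such bimodules via Theorem~\ref{thm end} and Corollary~\ref{cor properly outer}), and your argument for Item~(4) --- that a conditional expectation restricted to such a bimodule gives an adjointable $B$-$B$ map into ${}_BB_B$, which vanishes by orthogonality --- is a valid, arguably cleaner alternative to the paper's norm-estimate contradiction. But two of your steps have genuine gaps. For Item~(2) you invoke a Galois correspondence between intermediate \Cs-algebras $B\subset D\subset A$ and \Cs-subalgebra objects of $\IA$; no such correspondence is available in the \Cs-discrete setting (in particular it is not known that an arbitrary intermediate algebra is itself of reduced-crossed-product form), so you cannot feed $D$ into \cite[Theorem B]{2024arXiv240918161H}. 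The paper needs no structure theory of $D$ at all: given a nonzero ideal $I$ of $D$ and a positive $d\in I$ with $\|E(d)\|=1$, proper outerness produces $c\in B$ with $cdc\approx_{\epsilon}cE(d)c$, and simplicity of $B$ then gives $x_{i}\in B$ with $\sum_{i}x_{i}cdcx_{i}^{*}\approx_{2\epsilon}b$ for any prescribed positive $b\in B$; hence $I\cap B=B$ and $I=D$ by nondegeneracy. Relatedly, your global reduction to the crossed-product presentation of \cite[Theorem 1.1]{doi:10.1142/S1793525325500256} is both unnecessary and problematic outside the unital case, as you yourself note; the paper works directly with $\PQN(B\subset A)$ and Lemma~\ref{lem PQN}, which are proved for nondegenerate, not necessarily unital inclusions.

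The second gap is Item~(1), where your plan rests on showing that $\phi\circ E$ is pure for a \emph{generic} pure state $\phi$ of $B$ via a Baire-category selection; you flag this as the main obstacle, and indeed it is neither proved nor needed. The paper's argument is constructive in the Kishimoto style: take a dense sequence $(a_{n})_{n}$ in $\PQN(B\subset A)$, use Lemma~\ref{lem prop outer bimod} (as in \cite[Lemma 3.2]{MR634163}) to build positive norm-one elements $b_{N}\in B$ with $b_{N}b_{N+1}=b_{N+1}$ and $\|b_{N}(a_{n}-E(a_{n}))b_{N}\|<1/N$ for $n\le N$, and then take $\omega$ to be an extreme point of the intersection of the decreasing nonempty faces $K_{N}=\{\varphi\mid\varphi(b_{N}^{2})=1\}$; purity and $\omega=\omega\circ E$ follow at once, with no genericity argument. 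Your sketches for (3) and (5) are directionally right but remain to be made precise: (3) follows from the Item~(2) approximation with a single $x_{1}$ by pure infiniteness of $B$, and (5) is quoted from \cite[Theorem 3.5]{2024arXiv240503231I} together with Theorem~\ref{thm properly outer} rather than being a ``direct translation.''
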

The property (BEK) was introduced in \cite{MR1234394} in the context of \emph{quasi-product actions}. 
These are compact group actions whose dual endomorphisms are properly outer. 
Therein, it is shown that an inclusion $A^{G}\subset A$ arising from a quasi-product action of a compact group $G$ on a \Cs-algebra $A$ satisfies the conclusions of  Corollary \ref{coralpha:csirr} Items (1) and (5).
Further motivation for Corollary \ref{coralpha:csirr} comes from Izumi's recent results \cite{doi:10.1142/S0129055X24610026}, showing that minimal actions of compact groups with simple fixed point algebras are quasi-product actions. 
Moreover, Corollary \ref{coralpha:csirr} Item (2) implies that \Cs-irreducibility in the sense of R{\o}rdam \cite{MR4599249}—that all intermediate \Cs-algebras $B\subseteq D\subseteq A$ are simple—holds automatically in this context. 
\medskip

We now briefly discuss the broader context for our results involving \Cs-dynamical systems and their classification. 
In the study of group actions on \Cs-algebras, strong forms of outerness conditions, such as the Rokhlin property and isometric shift-absorption, 
play an important role (c.f. \cite{MR1344136, MR2414327, MR2644109}).
The notion of an isometrically shift-absorbing action was introduced by Gabe and Sz{\'a}bo for the dynamical Kirchberg–Phillips theorem \cite{MR4747811}.
It is known that isometric shift-absorption is equivalent to outerness for actions of discrete groups on Kirchberg algebras. 
To obtain this equivalence, Kishimoto’s results \cite{MR634163} on properly outer automorphisms are crucial. 
Moreover, Izumi characterized isometrically shift-absorbing actions of compact groups on Kirchberg algebras. 
This characterization result is deeply intertwined with the proper outerness of dual endomorphisms. 
In the context of UTC-actions on \Cs-algebras, it is expected that one needs an appropriate strong form of outerness for the associated bimodules and endomorphisms. 
Theorem \ref{thmalpha:end} shows that Izumi’s aforementioned results on proper outerness are not phenomena unique to purely infinite \Cs-algebras, and we expect it to become a useful tool for future studies of generalized \Cs-dynamical systems. 

\bigskip 

{\bf Acknowledgments:} The authors are grateful to Yuki Arano, Michael Brannan, Yasuyuki Kawahigashi, Matthew Kennedy, and Kan Kitamura for helpful discussions, support, and suggestions that improved this manuscript. 
Special thanks to Masaki Izumi for proposing a problem on proper outerness and for answering our questions in the Appendix Section. 
The authors are grateful to the anonymous referee for a careful reading of the manuscript and for many helpful comments that improved the exposition.
RHP was supported by an NSERC Discovery Grant. 
MM was supported by JSPS KAKENHI Grant Number JP26K16999.

\subsection*{Statements and Declarations}
{\bf Competing Interests:}
The authors declare that they have no competing interests.

\noindent{\bf Data availability:}
Data sharing is not applicable to this article as no new data were created or analyzed in this study.

\tableofcontents

%%%%%%%%%%%%%%%%%%%%%%%%%%%%%%%%%%%%%%%%%%%%%%%%%%%%%%%%%%%%%%%%%%%%%%%%%%%%%%%%%%%%%%%%%%%%
\section{Preliminaries}\label{sec:pre}
Throughout this paper,
we use the following notations and assumptions.
\begin{itemize}
    \item We assume every \Cs-algebra in this article is $\sigma$-unital.
    \item The \Cs-algebra of compact operators on a separable Hilbert space is denoted by $\IK$.
    \item The multiplier algebra of $A$ is denoted by $M(A)$.
    The unit of $M(A)$ is denoted by $1_{A}$.
\item For any $\epsilon>0$ and any elements $a$, 
$b$ of a normed space $X$,
we write $a\approx_{\epsilon}b$ for $\|a-b\|<\epsilon$.
\item We say that the inclusion $B\subset A$ of \Cs-algebras is irreducible if $M(A)\cap B^{\prime}=\IC1_{A}$ holds.
\item For a \Cs-algebra $A$ and $a\in A$,
the spectrum $a$ is denoted by ${\rm Spec}(a)$.
\end{itemize}

We summarize basic definitions and notations on Hilbert \Cs-modules and \Cs-correspondences. 
We refer to \cite{MR1325694} for details.
\begin{defn}
Let $B$ be a \Cs-algebra.
     A {\bf right Hilbert $B$-module }$(X_{B},\langle\cdot|\cdot\rangle_{B})$ is a right $B$-module $X_{B}$ equipped with a right sesquilinear form $\langle\cdot|\cdot\rangle_{B}\colon X_{B}\times X_{B}\to B$ satisfying the following:
        \begin{enumerate}
            \item For all $\xi,\eta\in X_{B}$ and $b\in B$,
            \[\langle\xi|\eta\tl b\rangle_{B}=\langle\xi|\eta\rangle_{B}b,\;\langle\xi|\eta\rangle_{B}^{*}=\langle\eta|\xi\rangle_{B},\;\text{ and }\;\langle\xi|\xi\rangle_{B}\geq0\;\text{ hold.}\] 
            \item The norm defined by $\|\xi\|:=\|\langle\xi|\xi\rangle_{B}\|^{\frac{1}{2}}$ is a complete norm of $X_{B}$.
        \end{enumerate}
        If no confusion arises, we use the notation $X$ instead of $X_{B}$.
        We define a left Hilbert $B$-module $({}_{B}X,{}_{B}\langle\cdot,\cdot\rangle)$ in a similar way.
\end{defn}

\begin{defn}
 A map $T\colon X_{B}\to Y_{B}$ between right Hilbert $B$-modules is called {\bf adjointable} if there exists a map $T^{*}\colon Y_{B}\to X_{B}$ satisfying 
 \[
 \langle T\xi|\eta\rangle_{B}^{Y_{B}}=\langle\xi|T^{*}\eta\rangle_{B}^{X_{B}}\;\text{ for all }\;\xi\in X_{B}\; \text{ and }\;\eta\in Y_{B}.
 \]

For vectors $\xi,\eta\in X_{B}$, the following adjointable operator is denoted by $|\xi\rangle\langle\eta|$: 
\[X_{B}\ni\zeta\mapsto\xi\tl\langle\eta|\zeta\rangle_{B}\in X_{B}.\]

We put 
\begin{align*}
 \rCorr(X_B\to Y_B)&:=\{T\colon X\to Y\mid T\;\text{ is adjointable}\},\\
    \End(X_B)&:=\rCorr(X_{B}\to X_{B}),\\
    \FinRan(X_B)&:=\mathrm{span}\{|\xi\rangle\langle\eta|\colon X_{B}\to X_{B}\mid\xi,\eta\in X\}\subset\End(X_B),\\
    \text{and }\;\CPT(X_B)&:=\overline{\FinRan(X_B)}\subset\End(X_B).
\end{align*}
\end{defn}
\noindent We often refer to $\FinRan(X_B)$ (resp. $\CPT(X_B)$) as the set of {\bf finite-rank operators} (resp. {\bf compact operators}).

\begin{defn}\label{defn:correspondence}
    A \textbf{right $A$-$B$ correspondence} $_{A}X_{B}$ consists of a right Hilbert $B$-module $X_{B}$ and a $*$-homomorphism $\phi\colon A\to\End(X_{B})$.
    We put for $a\in A$, $a\rhd-:=\phi(a)\in \End(X_B)$ and for $\xi\in {}_AX_B$ we write $a\rhd\xi:=\phi(a)\xi$. 
    If no confusion arises,
    we use the notation $X$ instead of $_{A}X_{B}$.
    We define a left $A$-$B$ correspondence similarly. 

For any right $A$-$B$ correspondence $_{A}X_{B}$, we define its \textbf{contragredient correspondence} as 
$_{B}X^c_{A}=\left\{\overline{\xi}\mid \xi\in\  {}_{A}X_{B}\right\}$.
The vector space structure and the left $B$-$A$ correspondence structure of $_{B}X^c_{A}$ is given as follows:
\[z\cdot\overline{\xi}+w\cdot\overline{\eta}:=\overline{\overline{z}\xi+\overline{w}\eta},\quad b\rhd\overline{\xi}\lhd a := \overline{a^*\rhd{\xi}\lhd b^*}, \quad {}_B\langle\overline{\eta}, \overline{\xi} \rangle := \langle \eta| \xi\rangle_B\]
for $a\in A$, $b\in B$, $\xi,\eta\in X$ and $z,w\in\IC$.
There is an anti-linear conjugation isomorphism 
$$
J:X\to X^c,\ \text{mapping}\ \xi\mapsto \overline{\xi}. 
$$
\end{defn}
 For any right $A$-$B$ correspondences $X$, $Y$, we put 
 \begin{align}
    \rCorr({}_AX_B\to {}_AY_B)&:=\{T\in\rCorr(X_{B}\to Y_{B})\mid T\circ a\rhd=(a\rhd) \circ T\;\text{ for all }a\in A\},\label{eqn:BilMaps}\\
    \End({}_AX_B)&:=\rCorr({}_AX_B\to {}_AX_B).\nonumber 
 \end{align}
  We recall the basics of tensor products of \Cs-correspondences (see \cite[Chapter 4]{MR1325694} for details).
   Let $_{A}X_{B}$ and $_{B}Y_{C}$ be \Cs-correspondences. 
   The algebraic relative/interior tensor product of bimodules $_{A}X_{B}$ and $_{B}Y_{C}$ is denoted by $_{A}X\boxdot_{B}Y_{C}$ and is given by the \emph{separation} 
   $$
   X\odot_\IC Y/{\rm span}_\IC\left\{ \xi\lhd b\otimes\eta - \xi\otimes b\rhd\eta\right\}_{\xi\in X, \eta\in Y, b\in B},
   $$
   with the obvious $A$-$C$ bimodule structure.
   The {\bf relative/interior tensor product} ${}_A(X\boxtimes_B Y)_C$ is the $A$-$C$ correspondence given by the completion of $X\boxdot_B Y$ with respect to the inner product
   $$
    \langle\xi_{1}\boxdot\eta_{1}|\xi_{2}\boxdot\eta_{2}\rangle^{X\boxtimes_{B} Y}:=\langle\eta_{1}|\langle\xi_{1}|\xi_{2}\rangle^{X}\rhd\eta_{2}\rangle^{Y}.
   $$

    The {\bf exterior tensor product} ${}_{A\otimes_{\min} C}(X\otimes Y)_{B\otimes_{\min}D}$ is a $A\otimes_{\min} C$-$B\otimes_{\min} D$ correspondence,
    which is a completion of ${}_{A}X_{B}\odot_{\IC} {}_{C}Y_{D}$ with respect to the following inner product:
    \[\langle\xi_{1}\otimes\eta_{1}|\xi_{2}\otimes\eta_{2}\rangle^{X\otimes Y}:=\langle\xi_{1}|\xi_{2}\rangle^{X}\otimes\langle\eta_{1}|\eta_{2}\rangle^{Y}.\]
    
    \begin{exa}\label{exa:basic_construction}
        Let $B\overset{E}{\subset}A$  be a nondegenerate inclusion (i.e. $\overline{AB}=\overline{BA}=A$) of \Cs-algebras with a faithful conditional expectation $E$.
        The right $A$-$B$ correspondence $\cE=(\cE, \langle\cdot|\cdot\rangle_{B})$ associated with $E$ is the completion of $A$ with respect to the norm $\|a\|_{E}:=\|E(a^{*}a)\|^{\frac{1}{2}}$.
        That is,
        there exists an injective bounded $A$-$B$ bilinear map with dense image
        \[
        \eta\colon A\to \cE \qquad\text{ satisfying }\qquad \langle\eta(a)|\eta(b)\rangle_{B}=E(a^{*}b)\quad \forall a,b\in A.
        \]
        The {\bf Jones projection} is a projection $e\in\End({}_{B}\cE_{B})$ defined as \[e\eta(a):=\eta(E(a))\] for every $a\in A$.
        Using these notations,
        for each $a,b\in A$,
        we have \[|\eta(a)\rangle\langle\eta(b)|=aeb^{*}\in\End(\cE_{B}).\]
        In this case, the \Cs-algebra 
        $$
        A_1:=\CPT(\cE_{B})=\overline{\rm span}AeA
        $$ is called  {\bf the basic construction}.
        We see that the multiplier algebra satisfies $M(A_{1})=\End(\cE_{B})$.
    \end{exa}

\subsection{\texorpdfstring{Finite-index inclusions of simple \Cs-algebras}{}}\label{sec:fin}
The Watatani index for unital inclusions of \Cs-algebras was introduced by Watatani in \cite{MR996807}, generalizing the Jones index for subfactors \cite{MR696688}. 
We recall the generalized definition based on \cite{MR1900138}.
Let $B\overset{E}{\subset}A$ be a non-degenerate inclusion of \Cs-algebras with a conditional expectation $E$ from $A$ onto $B$.
We use the notation introduced in Example \ref{exa:basic_construction}.
\begin{defn}[\cite{MR860811}]
The {\bf Pimsner-Popa index} $\mathrm{Ind}_{\mathrm{p}}\:E$ of $E$ is defined as follows:
\[
\mathrm{Ind}_{\mathrm{p}}\:E:=\inf\{\lambda>0\mid\lambda E-\id_{A}\text{ is completely positive.}\}.
\]
\end{defn}
\noindent Observe that if $\mathrm{Ind}_{\mathrm{p}}\:E<\infty$, then $E$ is automatically faithful.
\begin{defn}[Theorem 2.8 of \cite{MR1900138}]\label{def_Watatani}
If $\mathrm{Ind}_{\mathrm{p}}\:E<\infty$ and $A\subset A_{1}$,
then there is a bounded completely  positive $A$-$A$-bimodule map $\hat{E}$ from $M(A_{1})$ onto $M(A)$ satisfying $\hat{E}(e)=1$ and $\hat{E}(1_{M(A_1)})\in Z(M(A))$.
The {\bf Watatani index} of $E$ is defined by 
\begin{equation*}
\mathrm{Ind}_{\mathrm{w}}\: E:=
\begin{cases}
\hat{E}(1_{M(A_1)})&\text{if $A\subset A_1$, }\\
\infty&\text{otherwise.}
\end{cases}
\end{equation*}
\end{defn}

The next result due to Izumi guarantees that $B\subset A\subset A_1$ when $A$ and $B$ are simple. 
\begin{thm}[Corollary 3.7 of \cite{MR1900138}]
If $B\overset{E}{\subset}A$ is a non-degenerate inclusion of simple \Cs-algebras with $\mathrm{Ind}_{\mathrm{p}}\: E<\infty$,
then we have $A\subset A_1$ and $\mathrm{Ind}_{\mathrm{p}}\:E=\mathrm{Ind}_{\mathrm{w}}\:E$.
\end{thm}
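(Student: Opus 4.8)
The plan is to establish the two conclusions separately: first the inclusion $A\subset A_1$, and then the identity $\mathrm{Ind}_{\mathrm{p}}\:E=\mathrm{Ind}_{\mathrm{w}}\:E$, exploiting simplicity at both stages. Since $\mathrm{Ind}_{\mathrm{p}}\:E=\lambda<\infty$, the map $\lambda E-\id_A$ is completely positive, which on positive elements yields the Pimsner-Popa inequality $\lambda E(x^{*}x)\ge x^{*}x$ for all $x\in A$. I record the consequences I will use: $E$ is faithful and the canonical map $\eta\colon A\to\cE$ satisfies $\|\eta(x)\|^{2}=\|E(x^{*}x)\|\ge\lambda^{-1}\|x\|^{2}$, so $\eta$ is bounded below and (having dense image) is a Banach-space isomorphism onto $\cE$; the left action $\phi\colon A\to\End(\cE_B)=M(A_1)$ is a faithful nondegenerate $*$-homomorphism; as $E$ is surjective, $\cE_B$ is full, so $A_1=\CPT(\cE_B)$ is Morita equivalent to $B$ and hence simple; and simplicity of $A$ forces $Z(M(A))=\IC 1_A$.

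For the inclusion $A\subset A_1$: since $\CPT(\cE_B)$ is an ideal in its multiplier algebra $\End(\cE_B)$ and $\phi$ is a faithful $*$-homomorphism into $\End(\cE_B)$, the set $I:=\{a\in A\mid \phi(a)\in\CPT(\cE_B)\}$ is a closed two-sided ideal of $A$. By simplicity $I$ is $0$ or $A$, so it suffices to exhibit a single nonzero $a$ with $\phi(a)$ compact in order to conclude $A\subset A_1$. The natural source of such compactness---and the place where finiteness of the index is indispensable---is a finite quasi-basis $\{u_i\}_{i=1}^{n}\subset A$ with $\sum_{i} u_i\, e\, u_i^{*}=1_{M(A_1)}$, equivalently $\sum_i |\eta(u_i)\rangle\langle\eta(u_i)|=\id_\cE$; given one, $\phi(a)=\sum_i (a u_i)\, e\, u_i^{*}$ is a finite sum of finite-rank operators for every $a$. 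I would construct such a basis by a Gram--Schmidt-type orthonormalization over $B$ of a generating set, using fullness of $\cE_B$ and simplicity to enlarge the range at each stage, and the bound $\lambda E\ge\id_A$ to force the procedure to terminate. The main obstacle is exactly this termination: the Pimsner-Popa inequality is merely an order condition, and turning it into the finite-generation statement above---thereby ruling out the degenerate alternative $I=0$---is the technical heart of the argument, and is precisely where simplicity of $A$ enters.

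Once $A\subset A_1$ is known, Definition~\ref{def_Watatani} supplies the dual completely positive $A$-$A$-bimodule map $\hat E\colon M(A_1)\to M(A)$ with $\hat E(e)=1$ and $\hat E(1_{M(A_1)})\in Z(M(A))$. Since $Z(M(A))=\IC 1_A$, the Watatani index $\mathrm{Ind}_{\mathrm{w}}\:E=\hat E(1_{M(A_1)})$ is a nonnegative scalar $c$, and it remains to prove $c=\lambda$. One inequality, $\mathrm{Ind}_{\mathrm{p}}\:E\le\mathrm{Ind}_{\mathrm{w}}\:E$, is Watatani's general estimate: with a finite quasi-basis one has $c=\sum_i u_i u_i^{*}$ and checks that $c\,E-\id_A$ is completely positive, so $\lambda\le c$. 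For the reverse inequality I would pass to the inclusion $A\subset A_1$ with its expectation $\hat E$ and Jones projection, where the Pimsner-Popa inequality for $E$ translates into a comparison in $M(A_1)$; applying the bimodule map $\hat E$ and using $\hat E(e)=1$ yields $\hat E(1_{M(A_1)})\le\lambda$. Because both indices are central scalars (via $Z(M(A))=\IC 1_A$), the two estimates combine to give $\mathrm{Ind}_{\mathrm{p}}\:E=\mathrm{Ind}_{\mathrm{w}}\:E$, completing the proof.
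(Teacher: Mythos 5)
First, a point of order: the paper does not prove Theorem \ref{thm p=e} at all --- it is imported verbatim as Corollary 3.7 of \cite{MR1900138}, so there is no internal proof to measure your argument against, and I can only assess it on its own terms.

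Your skeleton is the right one in places: the reduction of $A\subset A_1$ to exhibiting a \emph{single} nonzero $a\in A$ with $\phi(a)\in\CPT(\cE_B)$ (via simplicity of $A$ and the fact that $\{a\in A\mid \phi(a)\in\CPT(\cE_B)\}$ is a closed two-sided ideal) is correct, as is the observation that a finite quasi-basis would settle everything at once, and the second half (the inequality $\mathrm{Ind}_{\mathrm{p}}E\le\mathrm{Ind}_{\mathrm{w}}E$ from Watatani's quasi-basis computation, the reverse from applying $\hat E$ to the operator form of the Pimsner--Popa inequality, and $Z(M(A))=\IC 1_A$ to make both indices scalars) is standard once the containment is known. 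The genuine gap is exactly where you flag it, and flagging it does not close it: producing a nonzero element of $\phi(A)\cap A_1$ \emph{is} the theorem, and the Gram--Schmidt sketch cannot be repaired as stated. Over a \Cs-algebra the inner products $\langle\xi|\xi\rangle_B$ are generally not invertible, so there is no orthonormalization to run; more fundamentally, the Pimsner--Popa inequality $\lambda E\ge \id_A$ does not imply that $\cE_B$ is algebraically finitely generated --- for non-simple algebras there are expectations of finite Pimsner--Popa index admitting no quasi-basis whatsoever, which is precisely why Izumi's Corollary 3.7 is a theorem about simple algebras and not a formal consequence of the index bound. Any termination argument must therefore use simplicity in an essential, global way, and you have not said how. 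There is also a structural circularity in your second half: you invoke Definition \ref{def_Watatani} to obtain $\hat E$, but that definition presupposes $A\subset A_1$; in \cite{MR1900138} the dual completely positive map (determined on $\mathrm{span}\,AeA$ by $xey\mapsto xy$, with norm controlled by the Pimsner--Popa constant) is constructed \emph{before} the containment is known, and it is this map, together with simplicity of both $A$ and of $A_1=\CPT(\cE_B)$, that is used to manufacture the nonzero compact element. Reversing that order of quantifiers is the missing idea, not a technicality.
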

\noindent As a consequence, we write $\mathrm{Ind}\:E$ instead of $\mathrm{Ind}_{\mathrm{w}}\:E$  whenever $A$ and $B$ are simple. 

\smallskip
Next, we briefly summarize relevant aspects of finite-index endomorphisms and the sector theory of a \Cs-algebra that we shall need.
(See Section 4 of \cite{MR1900138} for details.)
\begin{defn}\label{defn:finite_index_endomorphism}
    Let $A$ be a \Cs-algebra.
    \begin{enumerate}
        \item A $*$-endomorphism $\rho\colon A\to A$ is said to have a {\bf finite index} if there exists a finite-index conditional expectation $E\colon A\rightarrow\rho(A)$.
When $\rho$ is of finite index, there is a unique conditional expectation 
$$
E_{\rho}\colon A\rightarrow\rho(A)
$$
which has a {\bf minimal index} (see Theorem 2.12.3 of \cite{MR996807} and Section 3 of \cite{MR1900138}). The {\bf statistical dimension of $\rho$} is defined as the square root $\sqrt{\mathrm{Ind}_{\mathrm{w}}\: E_{\rho}}$ of the minimal index and is denoted by $d(\rho)$.
\item A $*$-endomorphism $\rho\colon A\to A$ is called irreducible if $\rho(A)\subset A$ is irreducible.
    \end{enumerate}
\end{defn}
\begin{no}\label{notation:sectors}
Let $A$ be a stable simple \Cs-algebra.
\begin{itemize}
\item The set of finite-index $*$-endomorphisms of $A$ is denoted by $\End_{\rm f}(A)$.
\item For any $*$-homomorphisms $\rho_{1},\rho_{2}\colon A\to B$, we put
\[(\rho_{1}, \rho_{2}):=\{T\in M(B)\mid T\rho_{2}(a)=\rho_{1}(a)T,\mathrm{\ for\ all\ } a\in A\}.\]
\item For endomorphisms $\rho_{1},\rho_{2}\colon A\to A$, define the composition $\rho_{1}\circ\rho_{2}$ by $\rho_{1}\circ\rho_{2}(a):=\rho_{1}(\rho_{2}(a))$ for all $a\in A$.
\item  Two endomorphisms $\rho_{1},\rho_{2}\in\fiEnd(A)$ are said to be unitary equivalent and denoted by $\rho_{1}\sim\rho_{2}$,
if there is a unitary $U\in(\rho_{1}, \rho_{2})$.
\item the unitary equivalence class $[\rho]$ of $\rho\in\fiEnd(A)$ is called a {\bf sector}.
We say that $[\rho]$ is irreducible if $\rho$ is irreducible.
\item The set of sectors $\fiEnd(A)/\sim$ is denoted by $\Sect(A)$.
\end{itemize}
\end{no}
Since $A$ is stable,
there are isometries $S_{1}, S_{2}\in M(A)$ with $S_{1}S_{1}^{*}+S_{2}S_{2}^{*}=1_{M(A)}$.
We can define the {\bf product and the sum of $\Sect(A)$} as 
\[
[\rho_{1}][\rho_{2}]:=[\rho_{2}\circ\rho_{1}]\qquad\text{and}\qquad [\rho_{1}]\oplus[\rho_{2}]:=[\rho],
\]
where $\rho$ is an endomorphism of $A$ such that
\[
\rho(a)=S_{1}\rho_{1}(a)S_{1}^{*}+S_{2}\rho_{2}(a)S_{2}^{*}
\] for all $a\in A$.
That is,
for any $\rho_{1},\rho_{2}\in\fiEnd(A)$,
$\rho_{1}\circ\rho_{2}$ and $\rho$ defined as above are also of finite index, and $[\rho]$ does not depend on the choice of $S_{1},S_{2}\in M(A)$.

The following statements involve an important property and structure of sectors—resembling semi-simplicity and dualizability in the context of tensor categories—and we record them here for the reader's convenience.
\begin{lem}[Lemma 4.1 of \cite{MR1900138}]\label{lem:semisimple_sectors}
   When $A$ is a separable simple stable \Cs-algebra,
   every $[\rho]\in\Sect(A)$ is uniquely decomposed into a direct sum of irreducible sectors.
\end{lem}
\begin{lem}[Lemma 4.4 of \cite{MR1900138}]\label{conjugate}
Let $A$ be a separable simple stable \Cs-algebra.
For any $\rho\in\fiEnd(A)$, 
there are $\overline{\rho}\in\fiEnd(A)$ with $d(\rho) = d(\overline{\rho}),$ and isometries $R_{\rho},\overline{R}_{\rho}\in M(A)$ satisfying
\begin{align*}
    R_{\rho}\in(\overline{\rho}\circ\rho,\;\id_{A}),&\qquad \overline{R}_{\rho}\in(\rho\circ\overline{\rho},\;\id_{A}),\;\text{ and }\\
    \qquad\overline{R}_{\rho}^{*}\rho(R_{\rho})=&R_{\rho}^{*}\overline{\rho}(\overline{R}_{\rho})=\frac{1}{d(\rho)}.
\end{align*} 
Moreover, 
$$
E_{\rho}(-):=\rho(R^{*}_\rho\overline{\rho}(-)R_\rho)
$$
(resp. $E_{\overline{\rho}}(-):=\overline{\rho}(\overline{R}^{*}_\rho\rho(-)\overline{R}_\rho)$) is the conditional expectation from $A$ to $\rho(A)$ (resp. $\overline{\rho}(A)$) attaining the minimal index. 
In that case, $(d(\rho) \overline{R}^{*}_\rho, d(\rho)\overline{R}_\rho)$ and $(d(\rho) R^{*}_\rho, d(\rho)R_\rho)$ are quasi-bases, respectively.
That is, we have 
\begin{align*}
    \id_{A}&=d(\rho)^{2}\overline{R}^{*}_{\rho} E_{\rho}(\overline{R}_{\rho} \,-)=d(\rho)^{2}E_{\rho}(-\,\overline{R}^{*}_{\rho})\overline{R}_{\rho},\\
    \id_{A}&=d(\rho)^{2}R^{*}_{\rho} E_{\overline{\rho}}(R_{\rho} \,-)=d(\rho)^{2}E_{\overline{\rho}}(-\,R^{*}_{\rho})R_{\rho}.    
\end{align*}

\end{lem}
We comment that the conjugate endomorphism above was constructed by Izumi as $\overline{\rho} = \rho^{-1}\circ\gamma$, where $\gamma$ is the restriction to $A$ of the canonical isomorphism $\gamma_1:A_1\to \rho[A]$ afforded by the stability of $A$.

% The next statement is well known, but we include a brief proof for the readers' convenience.
% \begin{lem}\label{Lemma:Compact_Sectors}
%     There is no nontrivial irreducible sector of 
% $\IK$.
% \end{lem}
% \begin{proof}
%     Let $\{e_{i,j}\}_{i,j}$ be matrix units of $\IK$, and let $\rho$ be an irreducible endomorphism of $\IK$. 
% Since the projection $\rho(e_{1,1})$ is of finite rank, let $n$ be the rank of $\rho(e_{1,1})$.
% Then we can take a family $\{v_{j}\}_{j=1}^{n}$ of partial isometries such that $v_{j}^{*}v_{j}=e_{1,1}$ and $\sum_{j=1}^{n}v_{j}v_{j}^{*}=\rho(e_{1,1})$.
% Set the strictly convergent sum $V_{j}:=\sum_{i=1}^{\infty}\rho(e_{i,1})v_{j}e_{1,i}\in M(\IK)=B(\ell^{2})$,
% then $\{V_{j}\}_{j=1}^{n}$ are isometries in $(\rho,\id_{\IK})$ satisfying the Cuntz relations. 
% Since $\rho$ is irreducible, 
% we get $n=1$ and $V:=V_{1}$ is a unitary.
% That is, $\rho=\Ad V$.
% \end{proof}

%%%%%%%%%%%%%%%%%%%%%%%%%%%%%%%%%%%%%%%%%%%%%%%%%%%%%%%%%%%%%%%%%%%%%%%%%%%%%%%%%%%%%%%%%%%%
\subsection{\texorpdfstring{Dualizable bimodules over \Cs-algebras}{}}\label{sec:finite_index_bimodules}
In this subsection,
all right \Cs-correspondences $X$ over $A$ that we consider are assumed to be left nondegenerate; that is, $\overline{A\rhd X} = X$.
We shall now summarize basic facts on \emph{dualizable} \Cs-correspondences based on \cite{MR2085108}.
\begin{defn}[Definition 4.3 of \cite{MR2085108}]\label{defn:dual_correspondence}
  Let $X$ be a right \Cs-correspondence over $A$.
  We say that $X$ has a {\bf conjugate/dual} object $\overline{X}$ if $\overline{X}$ is a right \Cs-correspondence over $A$ and there are $R_{X}\in\rCorr(_{A}A_{A}\to _{A}\overline{X}\boxtimes_{A}X_{A})$ and $\overline{R}_{X}\in\rCorr({}_{A}A_{A}\to {}_{A}X\boxtimes_{A}\overline{X}_{A})$ satisfying
  \begin{align*}
    (\overline{R}_{X}^{*}\boxtimes\id_{X})\circ(\id_{X}\boxtimes R_{X})=\id_{X}\qquad\quad\text{ and }\qquad\quad(R_{X}^{*}\boxtimes\id_{\overline{X}})\circ(\id_{\overline{X}}\boxtimes \overline{R}_{X})=\id_{\overline{X}}.  
  \end{align*}
  We call $X$ is {\bf dualizable} if a dual object and corresponding morphisms $(\overline{X}, R_X, \overline{R}_X)$ exist.
\end{defn}
\begin{rmk}\label{rmk:finite_index}
As in \cite[Theorem 4.4 and 4.13]{MR2085108},
it is known that a \Cs-correspondence is dualizable if and only if it is a {\bf finite-index bi-Hilbertian \Cs-bimodule} \cite[Definition 2.23]{MR2085108}.
A bi-Hilbertian $A$-$A$-bimodule is a right \Cs-correspondence $X$ over $A$ equipped with a compatible left \Cs-correspondence structure \cite[Definition 2.3]{MR2085108}.
If $X$ is a finite-index bi-Hilbertian \Cs-bimodule, the dual object $\overline{X}$ can be described as the contragredient bimodule $X^{c}$ in Definition \ref{defn:correspondence} \cite[\S4]{MR2085108}.
For simplicity of notation, we write {\bf finite-index \Cs-bimodule} instead of bi-Hilbertian \Cs-bimodule of finite index.
\end{rmk}

While there is a rigorous definition of index for a general \Cs-correspondence \cite{MR2085108}, due to its technical nature, here, we shall only give a rough sketch that will be sufficient for our purposes.  
Below, we phrase this in terms of the index for inclusions that we introduced above. 
\begin{rmk}\label{rmk:inclusion_vs_bimodule}
Let $X$ be a finite-index bimodule over $A$ with $M(A)\cap A^{\prime}=\IC$, 
then $X$ is \textbf{faithful} (i.e. the left and right actions of $A$ on $X$ are faithful) and \textbf{proper} (i.e. $\phi_{X}(A)\subset \CPT(X_{A})$) \cite[Theorem 2.22, Corollary 2.28]{MR2085108}.
As in \cite[Corollary 2.29]{MR2085108},
we have a conditional expectation 
\[E_{X}\colon\CPT(X_A)\to \phi_{X}(A)\] 
of finite Watatani index. (Recall that finite-index expectations are automatically faithful.)

Conversely, 
suppose $X$ is a faithful proper nondegenerate right \Cs-correspondence with left action $\phi_{X}$. 
If there is a conditional expectation $E_{X}\colon\CPT(X_A)\to \phi_{X}(A)$ of finite Pimsner-Popa index, then $X$ has a left \Cs-correspondence structure with the following left inner product:
\[_{A}\langle\xi,\eta\rangle:=\phi_{X}^{-1}(E_{X}(|\xi\rangle\langle\eta|)),\quad\text{for every }\xi,\eta\in X.\]
In this case,
if the contragredient bimodule $X^{c}$ is also proper,
then $X$ is a finite-index \Cs-bimodule.
Indeed, by \cite[Theorem 2.22]{MR2085108}, it is enough to show that $X$ and $X^{c}$ are of finite \emph{right numerical indices} in the sense of \cite[Definition 2.8]{MR2085108}.
That is, there exist $\lambda, \lambda'>0$ satisfying the following: 
\[ 
\bigg\|\sum_{i=1}^{n}{}_{A}\langle\overline{\xi}_{i},\overline{\xi}_{i}\rangle\bigg\| \leq \lambda \bigg\|\sum_{i=1}^{n}|\overline{\xi}_{i}\rangle\langle\overline{\xi}_{i}|\bigg\|,
\qquad\bigg\|\sum_{i=1}^{n}{}_{A}\langle\xi_{i},\xi_{i}\rangle\bigg\|\leq\lambda^{\prime}\bigg\|\sum_{i=1}^{n}|\xi_{i}\rangle\langle\xi_{i}|\bigg\|,\quad\forall\xi_{1},\dots,\xi_{n}\in X.\]
The second inequality follows from the definition of the left inner product on $X$ by taking $\lambda'=1$.
Since $E_{X}$ is of finite Pimsner-Popa index,
as in \cite[Theorem 1]{MR1642530},
there exists $\lambda>0$ such that $\lambda E_{X}-\id_{\CPT(X_{A})}$ is completely positive.
Hence, we get the first inequality as follows 
\begin{align*}
\bigg\|\sum_{i=1}^{n}{}_{A}\langle\overline{\xi}_{i},\overline{\xi}_{i}\rangle\bigg\|=
\bigg\|\sum_{i=1}^{n}\langle\xi_{i}|\xi_{i}\rangle_{A}\bigg\| = &\bigg\|\sum_{i,j}|\xi_{i}\rangle\langle\xi_{j}|\otimes e_{i,j}\bigg\| \\
\leq &\lambda\bigg\|\sum_{i,j}{}_{A}\langle\xi_{i},\xi_{j}\rangle\otimes e_{i,j}\bigg\|
 = \lambda\bigg\|\sum_{i,j}\langle\overline{\xi}_{i}|\overline{\xi}_{j}\rangle_{A}\otimes e_{i,j}\bigg\|
=\lambda\bigg\|\sum_{i=1}^{n}|\overline{\xi}_{i}\rangle\langle\overline{\xi}_{i}|\bigg\|,
\end{align*}
where $\{e_{i,j}\}_{i,j=1}^{n}$ is a system of matrix units of $M_{n}$.

\end{rmk}

\begin{exa}\label{exa:bimodule}
Let $A$ be a separable simple stable \Cs-algebra,
$\rho$ be a finite-index endomorphism of $A$ and $X:={}_{\rho}A$ be a right \Cs-correspondence associated with $\rho$,
that is $X$ is a trivial right Hilbert $A$-module equipped with a left action $\rho\colon A\to A\cong\CPT(X_A)$.
In this case,
using $\overline{\rho}\in\fiEnd(A)$ in Lemma \ref{conjugate},
${}_{\overline{\rho}}A$ is a dual \Cs-correspondence of $X$ and $R_{\rho}$ (resp. $\overline{R}_{\rho}$) in Lemma \ref{conjugate} corresponds to ${d(\rho)}^{-\frac{1}{2}}\overline{R}_{X}$ (resp. ${d(\rho)}^{-\frac{1}{2}}R_{X}$) in Definition \ref{defn:dual_correspondence}.
Moreover, by the definition of finite-index endomorphisms,
we see that there is a finite-index conditional expectation from $A\cong\CPT(X_A)$ onto $\rho(A)$.
\end{exa}

\begin{defn}\label{defn:fiBim}
For a \Cs-algebra $A$,
we define $\fiBim(A)$ to be the category {\bf of dualizable right \Cs-correspondences} (equivalently, finite-index bimodules) over $A$, whose space of morphisms from $X$ to $Y$ is given by $\rCorr({}_AX_A\to {}_AY_A)$ (see Equation (\ref{eqn:BilMaps})).
\end{defn}

We believe that the following results are well known to experts, but since we did not find a proof available in the literature that we could readily use, we provide proofs here for the reader's convenience.
\begin{lem}[Lemma 2.8 of \cite{doi:10.1142/S1793525325500256} for unital case]\label{lem:orthogonal_summand}
    Let $A$ be a \Cs-algebra with $M(A)\cap A^{\prime}=\IC$,
    $X$ and $Y$ be \Cs-correspondences over $A$ and $f\in\rCorr(_{A}X_{A}\to {}_{A}Y_{A})$.
    If $X$ is in $\fiBim(A)$,
    then $f(X)$ (resp. $\ker(f)$) is an orthogonal summand of $Y$ (resp. $X$) which has a finite index. 
\end{lem}
\begin{proof}
    We first show $\End({}_AX_A)$ is finite dimensional.
    Let $\phi_{X}(A)\overset{E_{X}}{\subset}\CPT(X_{A})$ be the finite-index inclusion in Remark \ref{rmk:inclusion_vs_bimodule}.
    The extension of $E_{X}$ from $\End(X_A)$ onto $M(\phi(A))$ is also denoted by $E_{X}$ (see \cite[Lemma 2.6]{MR1900138}).
    By the assumption $M(A)\cap A^{\prime}=\IC$,
    the restriction $E_{X}|_{\End({}_AX_A)}$ of $E_{X}$ on $\End({}_AX_A)$ is a state of finite index.
    By \cite[Proposition 2.7.3]{MR996807},
    $\End({}_AX_A)$ is finite dimensional.

    Since $f^{*}f\in\End({}_AX_A)$ has a finite spectrum $\sigma(f^{*}{f})$ and we have $\sigma(f^{*}{f})\setminus\{0\}=\sigma(ff^{*})\setminus\{0\}$,
    $ff^{*}$ also has a finite spectrum.
    Taking the spectral projection $p=\mathbb{1}_{\{0\}}(ff^{*})\in\End({}_{A}Y_{A})$,
    we have $ff^{*}(Y)=p^{\perp}(Y)$ and $pf=0$.
    This implies $ff^{*}(Y)\subset f(X)\subset p^{\perp}(Y)$, then we get $f(X)=p^{\perp}(Y)$.
    Similarly,
    let $q:=\mathbb{1}_{\{0\}}(f^{*}f)\in\End({}_{A}X_{A})$,
    then we have $\ker(f)=\ker(f^{*}f)=q(X)$.
    We see that $f(X)$ and $\ker(f)$ are orthogonal summands.
Since $f(X)$ is a complementable submodule, the restriction $f|_{\ker(f)^{\perp}}$ is an adjointable $A$-$A$ bilinear isomorphism between $\ker(f)^{\perp}$ and $f(X)$.

For the rest of the statement, we now show that $qX\cong\ker(f)$ and $q^{\perp}X\cong f(X)$ are finite-index bimodules.
    The left action $\phi_{qX}$ on $qX$ is given by $\phi_{X}(\cdot)q\colon A\to\CPT(qX_{A})=q\CPT(X_{A})q$.
    Hence, whenever $q\neq0$ we get the conditional expectation with finite Pimsner-Popa index as follows:
     \[E_{qX}\colon\CPT(qX_{A})\ni x\mapsto E_{X}(q)^{-1}qE_{X}(x)\in \phi_X(A)q,\]
     where we remark that $E_{X}(q)$ is a positive scalar.
As in Remark \ref{rmk:inclusion_vs_bimodule},
it is enough to show that $\phi_{qX}$ is faithful and the contragredient bimodule $(qX)^{c}$ is proper.
Since we have $\phi_{X}=E_{X}(q)^{-1}E_{X}\circ\phi_{qX}$ and $\phi_{X}$ is faithful,
$\phi_{qX}$ is also faithful.
Let $J\colon X\to X^{c}$ be the canonical antilinear isomorphism.
As in \cite[Corollary 4.6]{MR2085108},
we see that $JqJ^{-1}\in\End(_{A}X^{c}_{A})$ and $JqJ^{-1}(X^{c})=(qX)^{c}$. 
Arguing as in the first half of the proof,
it follows that $(qX)^{c}$ is an orthogonal summand of $X^{c}$.
Since $X^{c}$ is proper,
so is $(qX)^c$.
Hence,
$qX$ (resp. $q^{\perp}X$) is in $\fiBim(A)$.
\end{proof}
It is easy to check that $\fiBim(A)$ is closed under orthogonal sums and relative tensor products.
As in Lemma \ref{lem:semisimple_sectors},
we will see that $\fiBim(A)$ is semisimple if $M(A)\cap A^{\prime} \cong \mathbb{C}$ holds.
We say that $X\in\fiBim(A)$ is {\bf irreducible} if $\End({}_{A}X_{A})\cong M(A)\cap A^{\prime}$ holds.
\begin{lem}\label{lem:semisimple_bimodule}
    Let $A$ be a \Cs-algebra with $M(A)\cap A^{\prime}=\IC$,
    then every $X\in\fiBim(A)$ is decomposed into a finite direct sum of irreducible bimodules in $\fiBim(A)$.
\end{lem}
\begin{proof}
In the same way as in the proof of Lemma \ref{lem:orthogonal_summand},
we see that $\End({}_{A}X_{A})$ is finite dimensional.
Taking a finite family $\{p_{i}\}_{i=1}^{n}\subset\End({}_AX_A)$ of pairwise orthogonal minimal projections with $\sum_{i=1}^{n}p_{i}=1$.
We see that $X\cong\bigoplus_{i=1}^{n}p_{i}X$ and $\End({}_A(p_{i}X)_A)\cong\IC$ for every $i$.
Thanks to Lemma \ref{lem:orthogonal_summand},
each $p_{i}X$ is in $\fiBim(A)$,
then we get the statement.
\end{proof}

\begin{lem}[Lemma 2.8 (1) of \cite{doi:10.1142/S1793525325500256} for unital case]\label{lem:adjointable}
    Let $X$ and $Y$ be right \Cs-correspondences over $A$.
    If $X$ is in $\fiBim(A)$ and the left action of $A$ on $Y$ is nondegenerate,
    then every bounded $A$-$A$ bilinear map $g\colon X\to Y$ is adjointable.
\end{lem}
\begin{proof}
For each $x\in\CPT(X_{A})$,
we put $\tilde{g}(x):=g\circ x\colon X\to Y$.
Since $\tilde{g}(|\xi\rangle\langle\eta|)=|g(\xi)\rangle\langle\eta|$ for every $\xi,\eta\in X$,
we get $\tilde{g}(\FinRan(X_{A}))\subset \rCorr(X_{A}\to Y_{A})$.
By construction,
    $\tilde{g}$ is continuous with respect to the operator norm.
    Hence,
   $\tilde{g}$ is a bounded $A$-$A$ bilinear map \[\tilde{g}:\CPT(X_{A}) \to\rCorr(X_{A}\to Y_{A}).\]
    Let $\phi_{X}\colon A\to\End(X_{A})$ and $\phi_{Y}\colon A\to\End(Y_{A})$ be the left actions, and take an approximate unit $(x_{\mu})_{\mu}$ of $\CPT(X_{A})$. 
We now show that the bounded net $(\tilde{g}(x_{\mu}))_{\mu}\subset \rCorr(X_{A}\to Y_{A})$ converges in the strong $*$-topology.

    Since $\phi_{X}(A)$ is contained in $\CPT(X_{A})$ and $\tilde{g}$ is bounded,
    for every $a\in A$ and $\xi\in X$ (resp. $\eta\in Y$), the net
    $\tilde{g}(x_{\mu})\phi_{X}(a)\xi=\tilde{g}(x_{\mu}\phi_{X}(a))\xi$ (resp. $\tilde{g}(x_{\mu})^{*}\phi_{Y}(a)\eta=\tilde{g}(\phi_{X}(a)^{*}x_{\mu})^{*}\eta$) converges.
    As $X$ and $Y$ are left nondegenerate and the nets $(\tilde{g}(x_{\mu}))_{\mu}$ and $\tilde{g}(x_{\mu})^{*}$ are bounded,
    we can define bounded maps \begin{align*}T
    %:=\lim_{\mu}\tilde{g}(x_{\mu})
    \colon &X\longrightarrow Y\qquad\qquad\text{ and }\qquad\qquad S
    %:=\lim_{\mu}\tilde{g}(x_{\mu})^{*}
    \colon Y\longrightarrow X\\
    &\xi\mapsto\lim_{\mu}\tilde{g}(x_{\mu})\xi\qquad\qquad\qquad\qquad\quad\eta\mapsto\lim_{\mu}\tilde{g}(x_{\mu})^{*}\eta.
    \end{align*}
   It is straightforward to show that $T=S^{*}$ (i.e., $T$ is adjointable). By construction,
    we have $T\xi=\lim_{\mu}g\circ x_\mu(\xi)=\lim_{\mu}g(x_{\mu}\xi)=g(\xi)$ for every $\xi$,
    and $g$ is adjointable.
\end{proof}

%%%%%%%%%%%%%%%%%%%%%%%%%%%%%%%%%%%%%%%%%%%%%%%%%%%%%%%
\subsection{\texorpdfstring{Unitary tensor categories and their actions on \Cs-algebras}{}}\label{sec:UTC}
We shall briefly recall some basics of \emph{rigid \Cs-tensor categories}, along with some relevant examples. The specialized audience knows well that these can be viewed as a \Cs 2-category over a single object \cite[Remark 2.10]{MR4419534}, and can safely skim through this section. However, aiming for the non-specialized audience, we shall provide a more elementary and detailed but brief approach.
The interested reader can see \cite{MR3242743, doi:10.1142/S1793525325500256} and references therein.

All categories considered in this manuscript are assumed to be \emph{essentially small} (isomorphism classes of objects form a set) and are \emph{enriched over complex vector spaces} (Hom spaces are $\IC$-vector spaces) unless otherwise specified. 
As such, we can take direct sums of objects and morphisms, denoted by $-\oplus-$. 

We let $(\cC, \circ, \otimes, 1_\cC)$ denote a \textbf{tensor category}, where $-\circ-$ denotes the associative composition of morphisms as linear maps (we write the composition $-\circ-$ from right to left. e.g. for $f\in\cC(a\to b)$ and $g\in\cC(b\to c),$ we have $g\circ f\in\cC(a\to c)$), $-\otimes-: \cC\times \cC$ is the tensor product bilinear functor, and $1_\cC$ denotes the tensor unit. We write the tensor product of morphisms $f\in \cC(a\to a')$ and $g\in\cC(b\to b')$ as $f\otimes g\in\cC(a\otimes b\to a'\otimes b')$.
Strictly speaking, tensor categories come equipped with the data of an \emph{associator, and left/right unitors} natural isomorphisms ($\alpha_{a,b,c}: a\otimes(b\otimes c)\cong (a\otimes b)\otimes c,$ $a\otimes 1_\cC\cong a$ and $1_\cC\otimes a\cong a$ respectively, for $a,b,c\in \cC$); however, we will completely suppress this data as it shall not play a role in this manuscript, and most of the categories we will focus on are \emph{strict}. 
In fact, for ease of notation, we shall denote tensor categories simply by $\cC$. 

We say a tensor category $\cC$ is a \textbf{$*$-tensor category} if for every $a,b\in \cC$, the Hom space $\cC(a\to b)$ has an involution/dagger structure
$$
*:\cC(a\to b)\to \cC(b\to a),
$$
such that for all $\lambda\in \IC$, $a,b,c\in \cC$ and $f\in\cC(a\to b),g\in \cC(b\to c)$, we have $(f^*)^*= f,$ $(\lambda f)^* = \overline{\lambda}f^*$ is conjugate-linear, $(g\circ f)^* = f^*\circ g^*$ and $(f\otimes g)^* = f^*\otimes g^*.$

\begin{defn}\label{defn:CstarCat}
    A $*$-tensor category is a \textbf{\Cs-tensor category} if it satisfies the following conditions:
    \begin{enumerate}[label=(C*\arabic*)]
        \item For each pair of objects $a,b\in \cC,$ there is a conjugate-linear involution $*:\cC(a\to b)\to\cC(b\to a)$ such that for each pair of composable morphisms $f$ and $g$ we have $(f\circ g)^*=g^*\circ f^*.$
        \item There is a \Cs-norm on $\cC(a\to b);$ that is, the identities $||f\circ f^*||=||f^*\circ f||= ||f||^2$ hold on $\cC(a\to b)$.
        \item For each $f\in\cC(a\to b),$ there exists $g\in\cC(a\to a)$ such that $f^*\circ f = g^*\circ g.$
    \end{enumerate}
    A $*$-functor $F:\cC\to \cD$ between \Cs-categories satisfies $F(f^*)=F(f)^*$ for every morphism $f$ in $\cC.$
\end{defn}

We will assume that all of our \Cs-tensor categories are unitarily \emph{Cauchy complete}; this means that $\cC$ admits all orthogonal direct sums, and also that all idempotents split via an isometry.
For further details see \cite[Assumption 2.7]{MR4419534}.

We say a tensor category $\cC$ is \textbf{rigid/has duals} if for every object  $c\in \cC$ there exists some  $\overline{c}\in\cC$ together with \textbf{evaluation and coevaluation maps} $\ev_{c}\in\cC(\overline{c}\otimes c\Rightarrow 1_\cC)$ and $\coev_c\in\cC(1_\cC\Rightarrow c\otimes \overline{c})$ 
satisfying the \emph{Zig-Zag/Duality/Snake equations}: $\id_c=(\id_c\otimes \ev_c)\circ(\coev_c\otimes \id_c)$ and $\id_{\overline c} = (\ev_c\otimes \id_{\overline c})\circ(\id_{\overline c}\otimes\coev_c).$ 
Additionally, we assume that each object $c\in\cC$ has a predual object $c_\vee\in\cC$ with $\overline{c_\vee}\cong c$ in $\cC.$ 
Throughout this manuscript, whenever $\cC$ is a rigid \Cs-tensor category, we will assume we have fixed a particular  \emph{unitary dual functor} $\overline{\ \cdot\ }$ on $\cC$ so that $(\overline{f})^*=\overline{f^*}$ for all morphisms in $\cC.$ 
Furthermore, the choice of unitary dual functor can be made \emph{balanced} whenever the corresponding \Cs-tensor category has a simple unit. 
The term balanced means that the left and right categorical traces induced by the chosen dual match. 
We refer the interested reader for more details on unitary dual functors on \Cs-tensor categories to  \cite[Definition 2.9]{MR4419534} and \cite{MR4133163}. 

Now we introduce the categories we are most interested in in this manuscript:  
\begin{defn}
A \textbf{unitary tensor category} (\textbf{UTC}) $\cC$ is a  rigid $\rm C^*$-tensor category with an irreducible unit object $1_{\cC}$. That is, $\End_\cC(1_\cC)\cong\IC$. 
If the isomorphism classes of simple objects in a unitary tensor category form a finite set, we say the category is a \textbf{unitary fusion category}. 
\end{defn}
\noindent We remind the reader that UTCs are automatically semisimple, since the simplicity of the unit along with rigidity implies that all Hom spaces are finite dimensional. We typically choose a complete set of representatives of isomorphism classes of irreducible objects, denoted $\Irr(\cC).$
We use $\cC^{\op}$ to denote the tensor category $\cC$ with the reversed tensor product structure.

\begin{exa}
    Given a separable \Cs-algebra $A$, we denote by $\fiBim(A)$ the rigid \Cs-tensor category of \textbf{dualizable $A$-$A$ bimodules} along with bounded $A$-$A$ bilinear maps. (cf Definition \ref{defn:fiBim}) 
    By Lemma \ref{lem:adjointable}, these maps are automatically right-adjointable, turning endomorphism spaces into \Cs-algebras.
    The unit object corresponds to the trivial bimodule ${}_AA_A$, and the direct sum $\oplus$ is the usual orthogonal direct sum of bimodules and intertwiners.
    The tensor product of $X,Y\in\fiBim(A)$ is the \textbf{relative tensor product/Connes' fusion} $X\boxtimes_AY$ discussed in Section \ref{sec:finite_index_bimodules}. The tensor product of intertwiners corresponds to the relative tensor product of the underlying $A$-$A$ bimodular maps. 

    If $A'\cap M(A)\cong \IC$, then the unit ${}_AA_A\in \fiBim(A)$ is irreducible (i.e. $\End({}_AA_A)\cong A'\cap M(A)\cong \IC$) and so $\fiBim(A)$ is a UTC.
\end{exa}

We believe that the following results are well known to experts. We provide proofs here for the reader's convenience. A similar argument is used in \cite[Section 3]{MR1900138}.
\begin{lem}\label{lem:stable_bimodule}
Let $A$ be a separable simple \Cs-algebra.
If $A$ is stable or unital purely infinite in the Cuntz standard form,
then every finite-index bimodule $X$ over $A$ is isomorphic to a bimodule associated with a finite-index endomorphism as in Example \ref{exa:bimodule}.
\end{lem}
\begin{proof}

First, we assume that $A$ is stable.
    Set a right Hilbert \Cs-module $\ell^2(A)_{A}:=A\otimes\ell^{2}$.
    Since $A$ is separable, $X_{A}$ is a direct summand of $\ell^2(A)_{A}$ \cite[Theorem 6.2]{MR1325694}.
    As in Remark \ref{rmk:inclusion_vs_bimodule} (see also \cite[Theorem 2.22]{MR2085108}),
    $_{A}X_{A}$ is a proper right \Cs-correspondence over a $\sigma$-unital \Cs-algebra.
    Hence,
    there is a projection $p\in\End(\ell^2(A)_{A})$,
    and a nondegenerate $*$-homomorphism $\phi\colon A\to \CPT(p\ell^2(A)_{A})$ such that $_{A}X_{A}$ is isomorphic to the \Cs-correspondence associated with $((pH)_{A},\;\phi)$.
    Since $\CPT(p\ell^2(A)_{A})=p\CPT(\ell^2(A)_{A})p$ contains a stable \Cs-subalgebra $\phi(A)$ nondegenerately, it is stable.
    By the construction of $\ell^2(A)_{A}$,
    we can take a projection $q\in\End(\ell^2(A)_{A})$ such that $q\ell^2(A)_{A}$ is isomorphic to the trivial right module $A_{A}$.
    We see that $\CPT(\ell^2(A)_{A})\cong A\otimes\IK$ is simple, in particular, $p\CPT(\ell^2(A)_{A})p$ and $q\CPT(\ell^2(A)_{A})q$ are stable full corners of $\CPT(\ell^2(A)_{A})$.
    Thanks to \cite[Theorem 4.23]{MR969204},
    there is a partial isometry $V\in \End(\ell^2(A)_{A})=M(\CPT(\ell^2(A)_{A}))$ satisfying $V^{*}V=p$ and $VV^{*}=q$.
    We get the $*$-homomorphism 
    \[\rho\colon A\to q\CPT(\ell^2(A)_{A})q=\CPT(q\ell^2(A)_{A})\cong A\]
    defined as $\rho(x):=V\phi(x)V^{*}$.
   By construction,
   $_{A}X_{A}$ is unitary isomorphic to the \Cs-correspondence ${}_{\rho}A$ associated with $\rho$ (see Example \ref{exa:bimodule}).
   Thanks to Remark \ref{rmk:inclusion_vs_bimodule},
   there is a finite-index conditional expectation $E\colon \CPT(({}_{\rho}A)_{A})=A\to \rho(A)$,
   then $\rho$ is of finite index. 

   When $A$ is unital, purely infinite simple, and in the Cuntz standard form,
   we can consider the right module $\ell^2(A)_{A}$, projections $p,q\in\End(\ell^2(A)_{A})$, and a $*$-homomorphism $\phi$ as in the previous paragraph.
   That is, ${}_{A}X_{A}$ is isomorphic to the \Cs-correspondence associated with $\phi\colon A\to \CPT(p\ell^2(A)_{A})\cong p\CPT(\ell^2(A)_{A})p$, and $A\cong\CPT(q\ell^2(A)_{A})\cong q\CPT(\ell^2(A)_{A})q$.
   Notice that there are unital $*$-homomorphisms from $A$ to $p\CPT(\ell^2(A)_{A})p$ and $q\CPT(\ell^2(A)_{A})q$ given by the left actions. 
   As these maps send $1_{A}$ to $p$ and $q$ respectively, and $0=[1_{A}]_{0}\in K_{0}(A)$,
   we get $[p]_{0}=[q]_{0}=0$ in ${\rm K}_{0}(\CPT(\ell^2(A)_{A}))$.
   Since $\CPT(\ell^2(A)_{A})\cong A\otimes\IK$ is purely infinite simple,
   $p$ and $q$ are Murray--von Neumann equivalent in $\CPT(\ell^2(A)_{A})$.
   Similar to the previous paragraph,
   taking a partial isometry $V\in \CPT(\ell^2(A)_{A})$ with $V^{*}V=p$ and $VV^{*}=q$, we get the endomorphism 
   \[\rho\colon A\to q\CPT(\ell^2(A)_{A})q\cong A\]
   defined by $\rho(x)=V\phi(x)V^{*}$.
\end{proof}

\begin{rmk}
    As a consequence of Lemma \ref{lem:stable_bimodule}, if $A$ is a separable simple \Cs-algebra which is also stable or $\cO_2$-stable then every finite-index $A$-$A$ bimodule $X$ can be realized by an endomorphism $\rho: A\to A$ of finite index as ${}_{A}({}_\rho A)_A\cong {}_AX_A$ unitarily.
    Therefore, as unitary tensor categories, we have that 
\begin{align*}
    \fiBim(A)\simeq\fiEnd(A).
\end{align*}
Here,
    we remark that ${}_{\rho_{2}}A\boxtimes_{A}{}_{\rho_{1}}{A}$ corresponds to $\rho_{1}\circ\rho_{2}$, and this also corresponds to the product of sectors as defined in Notation \ref{notation:sectors}.
    However, giving a detailed proof of this fact would take us far afield, and we shall not use it. 
\end{rmk}

We now describe the relevant functors between unitary tensor categories: 
A {\bf unitary tensor functor} is a triple $(F,\ F^1,\ F^2)$ consisting of a $*$-functor $F:\cC\to \cD,$ a chosen unitary isomorphism $F^1\in\cD(1_\cD\to F(1_\cC))$ (which we will hereafter assume is the identity), and a unitary natural isomorphism $F^2=\{F^2_{a,b}: F(a)\otimes_\cD F(b)\to F(a\otimes_\cC b)\}_{a, b\in\cC},$ sometimes called the \emph{tensorator} of $F.$ We shall often refer to $F$ as a unitary tensor functor without explicitly mentioning $F^2$, and we will always suppress the use of $F^1$.

\smallskip

An {\bf action of a UTC $\cC$ on a (not necessarily unital) \Cs-algebra $A$} is a \emph{unitary tensor functor}
 \footnote{For consistency with group actions, actions of tensor categories are often defined as functors $\cC^{\rm op}\to\fiBim(A)$. In this paper, however, we adopt the above definition. The main results are independent of this choice.}
\begin{align}
    F:\cC\to \fiBim(A)
\end{align}
We say that \textbf{an action $F$ is outer} if it is given by a fully-faithful functor. That is, if $F$ is bijective at the level of Hom spaces. We notice that unitarity and semisimplicity make $F$ automatically faithful, however fullness is far from automatic. 

\smallskip
There are a few main sources of UTC actions that we will be interested in:
\begin{exa}\label{exa:HilbGamma}
    Let $\Gamma$ be a countable discrete group, and consider \textbf{the UTC of $\Gamma$-graded Hilbert spaces} denoted $\fdHilb(\Gamma)$. That is, objects  $\cH\in\fdHilb(\Gamma)$ are Hilbert spaces admitting a grading $\cH = \oplus_{g\in \Gamma}\cH_g,$ where each $\cH_g$ is a finite-dimensional Hilbert space and the direct sum is finitely supported on $\Gamma.$ A standard choice of irreducibles is $\Irr(\fdHilb(\Gamma)) = \{\IC_g\}_{g\in \Gamma}$, and so $\cH\cong \oplus_g \IC_g^{\mathsf{dim}(\cH_g)}.$
    The morphisms are bounded linear maps preserving the grading. 
    The tensor product structure can be expressed as (the linear extension of) $\IC_g\otimes \IC_h\cong \IC_{gh}$ for $g,h\in\Gamma,$ encoding the group structure.

    There is a variant $\fdHilb(\Gamma, \omega)$, the UTC of \textbf{ twisted $\Gamma$-graded Hilbert spaces}, where $\omega$ is a normalized $3$-cocycle (determined up to a coboundary) giving the data of the associator $\alpha_{g,h,k} = \omega(g,h,k)\cdot\id$,  satisfying the pentagonal equations defining a tensor category. 
    All the other structures remain the same. 

    Given a separable \Cs-algebra, an action $\alpha: \Gamma\acts A$ defines a UTC action 
    \begin{align*}
        F:\fdHilb(\Gamma)^{\op}&\to\fiBim(A)\qquad \text{with tensorator} \qquad F^2_{g,h}:F(\IC_g)\boxtimes F(\IC_h)\to F(\IC_{hg})\\
        \IC_g&\mapsto {}_gA_A \qquad\hspace{6cm} \eta\boxtimes\xi\mapsto\ \alpha_{h}(\eta)\xi,
    \end{align*}
    where ${}_gA_A$ is the right Hilbert $A$-module $A_A$ with left action $a\rhd\zeta = \alpha_g(a)\zeta.$ Outerness of $\alpha$ is equivalent to full-faithfulness of $F$. 
    We recall that $\fdHilb(\Gamma)^{\op}$ denotes the same category of $\Gamma$-graded Hilbert spaces as above, but with the opposite tensor product. 
\end{exa}

Very many examples of interest arise from (the discrete duals of) compact quantum groups, which we summarize below.
\begin{exa}\label{exa: RepCompact}
    Let $G$ be a compact group, and we consider \textbf{the UTC of finite-dimensional representations/modules over $G$}, denoted $\Rep(G)$, whose morphisms are intertwiners. The tensor product on objects is the usual tensor product of representations, and on morphisms is the usual tensor product of linear maps. 
    
    Given an action $\alpha$ of $G$ on a separable \Cs-algebra $A$, we consider \emph{the crossed product \Cs-algebra} $A\rtimes_{\alpha}G$  as well as the \emph{fixed-point algebra} $A^G$.
    Given such a representation $(\pi, \cH_\pi)$ we can consider the right Hilbert $A\rtimes_{\alpha}G$-module $\cH_{\pi}\otimes A\rtimes_{\alpha}G$ and left action $A\rtimes_{\alpha}G\to\End(\cH_{\pi}\otimes A\rtimes_{\alpha}G_{A\rtimes G})\cong B(\cH_{\pi})\otimes M(A\rtimes_{\alpha}G)$ associated with the canonical inclusion of $A$ and the unitary representation $\pi\otimes\lambda$ of $G$, where $\lambda$ is the left regular representation $G\to M(A\rtimes G)$.
    The associated bimodule is denoted by $X_{\pi}\in\fiBim(A\rtimes G)$.
    This defines the action $F_{\alpha}\colon\Rep(G)\ni(\pi,\cH_{\pi})\mapsto X_{\pi}\in\fiBim(A\rtimes G)$ of $\Rep(G)$ on $A\rtimes G$.
    If $\alpha$ is \emph{minimal} (i.e., $M(A)\cap (A^{G})'\cong \IC$) and the fixed-point algebra is simple,
    then $A\rtimes G$ is a simple \Cs-algebra \cite[Proposition A]{MR4813137}.
    In this case, $A^{G}$ is isomorphic to the full corner of $A\rtimes G$ given by the averaging projection $p_{G}:=\int_{G}\lambda_{g}dg$, and this implies $A^{G}$ is Morita equivalent to $A\rtimes G$. 
    Hence $F_{\alpha}$ defines \textbf{the dual action of $\Rep(G)$ on $A^{G}$}.
    
    We remark that this example closely generalizes to compact quantum groups, 
    however we shall not expand the details here. 
\end{exa}

There are many other outer actions of UTCs on \Cs-algebras that do not necessarily come from compact quantum groups or their discrete duals. 
To name a few, given an arbitrary UTC $\cC$, Hartglass and the first-named author constructed a separable monotracial exact simple unital \Cs-algebra $A=A(\cC)$ and an outer action $\cC\acts A$ in \cite{MR4139893}. Moreover, the first-named author showed in \cite{2025arXiv250321515H} how to obtain UTC-actions using the gauge symmetries of certain Cuntz-Pimsner algebras. 
By different methods, Izumi constructed families of UTC-actions by sectors of Cuntz-Krieger \Cs-algebras in \cite{MR1228532, MR1604162}. More recently, Kitamura constructed outer actions of arbitrary UTCs on Kirchberg algebras in \cite{2024arXiv240518429K} using their structural properties.

\subsection{\texorpdfstring{\Cs-discrete inclusions}{}}
In this section, we summarize the theory of \Cs-discrete inclusions that we shall need, developed in \cite{doi:10.1142/S1793525325500256, 2024arXiv240918161H, 2025arXiv250321515H} and references therein. 
Discreteness for subfactors was originally introduced by Izumi, Longo, and Popa in \cite[Definition 3.7]{MR1622812}. 
Later, C. Jones and Penneys introduced a tensor categorical description of discrete subfactors in \cite{MR3948170}, focusing on actions of UTCs on $\rm{II}_1$-factors and highlighting the importance of so-called \emph{\Cs-algebra objects}.

Continuing with the notation from Example \ref{exa:basic_construction}, we consider a nondegenerate inclusion $B\subset A$, equipped with a faithful conditional expectation $E\colon A\to B$. 
That is, $E$ is a $B$-$B$ bimodular completely positive faithful map onto $B$. If $B$ is unital, then by nondegeneracy, $A$ is also unital with the same unit and thus $E$ is unital as well. 
When referring to the inclusion, we shall often omit $E$ from the notation when it is not explicitly needed, but all inclusions we consider come equipped with such $E$ unless specified otherwise. 
We say that $B\subset A$ \textbf{is irreducible} in case $B'\cap M(A)\cong \IC.$

The \textbf{projective quasi-normalizer} (PQN) of $A\subset B$  is defined as 
\begin{align}\label{eqn:PQN}
    A^\diamondsuit:=\PQN(B\subset A):=\{a\in A|\ \exists K\in \fiBim(B),\ \eta(a)\in K\subset \eta(A) \},
\end{align}
which by \Cs-Frobenius Reciprocity \cite[Theorem 3.6]{doi:10.1142/S1793525325500256} in the unital case is automatically an intermediate $*$-algebra $B\subset A^\diamondsuit\subset A.$ 
We sometimes refer to elements in $A^\diamondsuit$ as \textbf{projective quasi-normalizers}.

\begin{defn}\label{defn:C*Disc}
The inclusion $B\subset A$ is called \textbf{\Cs-discrete} if $A^\diamondsuit\subset A$ is dense in norm. 
\end{defn}

We shall now introduce the tensor categorical aspects of \Cs-discrete inclusions. 
A {\bf $*$-algebra object in $\cC$} is a \emph{lax} tensor functor $(\IA, \IA^2):\cC^{\op}\to \Vec$ into the category of (not necessarily finite-dimensional) complex vector spaces equipped with a $*$-structure given by a conjugate-linear natural transformation $\{j^{\IA}:\IA(c)\to \IA(\overline{c})\}_{c\in\cC}$ which is involutive, unital, and reverses multiplication.\footnote{We are really talking about algebra objects in $\Vec(\cC),$ the category of linear functors $\cC^{\op}\to \Vec$ with natural linear transformations, but we shall obfuscate this detail from our discussion.}
(For further details on the $*$-structure, we refer the reader to  \cite[Definition 22]{JP17}). 
That $(\IA,\IA^2)$ is lax tensor means that the tensorator $\IA^2$ natural transformation  
$\{\IA^2_{a,b}: \IA(a)\otimes \IA(b)\to \IA(a\otimes b)\}_{a,b\in \cC}$, consisting of not necessarily isomorphisms is unital and associative. 
For further details, we direct the reader to \cite[\S2.2]{JP17}. 
Furthermore, we say a $*$-algebra object is {\bf connected} if $\IA(1_\cC)\cong \IC.$

As proven in \cite[Theorem 2]{JP17}, $*$-algebra objects in $\cC$ correspond to \emph{cyclic} $\cC$-module $*$-categories. 
That is, a $*$-category $\cM$ together with a chosen object $m\in \cM$ and a $\cC$-module structure on $\cM$ such that every object in $\cM$ has the form $c\otimes m$ for $c\in \cC$, yield the same data as a $*$-algebra object $\IA.$
A $*$-algebra object $\IA$ is a {\bf \Cs-algebra object} if its corresponding $\cC$-module $*$-category $\cM$ is a \Cs-category (i.e. every $*$-algebra of endomorphisms in $\cM$ is in fact a \Cs-algebra) \cite[Definition 25]{JP17}.  

\begin{exa}
    Consider the UTC $\fdHilb(\Gamma, \omega),$ with the notation from Example \ref{exa:HilbGamma}. 
    By \cite[Proposition 6.1]{MR3948170}, connected \Cs-algebra objects $\IA$ in $\fdHilb(\Gamma, \omega)$ are classified up to $*$-algebra isomorphism by pairs $(\Lambda, [\mu])$, for a subgroup $\Lambda\leq\Gamma$ such that $\omega|_\Lambda$ is cohomologically trivial, and a normalized $2$-cocycle $[\mu]\in H^2(\Lambda, U(1))$ satisfying $\mu(e, g) =  1 = \mu(g, e)$ for all $g\in \Lambda.$ 
\end{exa}
\begin{exa}
    We now describe examples of connected \Cs-algebra objects from a given compact quantum group $\IG$ \cite[Examples 16]{JP17}. 
    For details on compact quantum groups and their representations, we refer the reader to \cite{MR3204665}. 
    By Tannaka-Krein duality, $\IG$ can be recovered from the data of its finite-dimensional representation category $\Rep(\IG)$ (c.f. Example \ref{exa: RepCompact}) together with the forgetful \emph{fiber functor} $\mathsf{For}:\Rep(\IG)\to \fdHilb,$ which forgets the actions of $\IG$ and gives the underlying Hilbert spaces. 
    The forgetful functor endows $\Hilb$ with the structure of a $\Rep(\IG)$-module \Cs-category, and considering the base-point $m=\IC\in \Hilb,$ by \cite[Theorem 2]{JP17} we obtain a connected \Cs-algebra object $\IA$ in $\Rep(\IG).$
\end{exa}
\begin{exa}
Let $B$ be a \Cs-algebra with a UTC action $F:\cC\to \fiBim(B).$ 
Then, $\cM:=\fiBim(B)$ is a $\cC$-module \Cs-category structure with action determined by $F$.
By choosing the base-point $m= {}_BB_B$, we obtain the connected \Cs-algebra object $\mathsf{For}\circ F:\cC\to \Vec$ determined by \cite[Theorem 2]{JP17}. 
Here, $\mathsf{For}$ is the functor forgetting the correspondence structure and keeping the vector space structure. 
To ease the notation, we shall continue denoting the \Cs-algebra object $\mathsf{For}\circ F$ simply by $F$.
For further details regarding the $*$-algebra object structure on $F$, we direct the reader to \cite[\S3.2, 3.3 and the proof of Theorem 2]{JP17}.
\end{exa}

We now sketch a reduced crossed product construction for a UTC action on a unital \Cs-algebra with trivial center over a connected \Cs-algebra object, which gives a well-understood class of unital irreducible \Cs-discrete inclusions \cite{{doi:10.1142/S1793525325500256}}. 
Given a unital \Cs-algebra $B$ with $B\cap B'\cong\IC$ and a UTC action $F:\cC\to  \fiBim(B)$ and a connected \Cs-algebra object $\IA$, we shall construct the  reduced crossed-product \Cs-algebra of $B$ by $F$ under $\IA$ (c.f. \cite[Construction 4.4]{doi:10.1142/S1793525325500256}). 
First, consider {\bf algebraic realization/crossed-product of $B$ by $F$ under $\IA$} whose underlying vector space is
\begin{align}\label{eqn:AlgCrossedProduct}
    B\rtimes_{\text{alg}, F}\IA:=\bigoplus _{c\in\Irr(\cC)}F(c)\otimes_{\IC} \IA(c), 
\end{align}
which has a canonical $*$-algebra structure coming from those of $F$ and $\IA$.
Indeed, given finite sums $x=\sum_c \xi_{(c)}\otimes f_{(c)}$ and $y = \sum_c \eta_{(c)}\otimes g_{(c)}$ in $B\rtimes_{\text{alg}, F}\IA$, the multiplication and involution are given by 
\begin{align}
    xy &= \sum_{c,d,e\in\Irr(\cC)}\sum_{\alpha\in\Isom(e,c\otimes d)}F(\alpha^*)\circ[F^2_{c,d}(\xi_{(c)}\otimes \eta_{(d)})]\otimes \IA(\alpha)\circ[\IA^2_{c,d}(f_{(c)}\otimes g_{(d)})], \nonumber \\
     x^* &= \sum_{c\in \Irr(\cC)} j^F_c(\xi_{(c)})\otimes j^{\IA}_c(f_{(c)}).
\end{align}
Here, $\Isom(e,c\otimes d)$ denotes a chosen complete set of isometries $\alpha: e\to c\otimes d$ with orthogonal ranges. 
That is, $\sum_{\alpha\in\Isom(e,c\otimes d)}\alpha\circ\alpha^* = \id_{c\otimes d}.$

There is a faithful unital completely positive surjective map $E':B\rtimes_{\text{alg}, F}\IA\to B$ given by the projection onto the $1_\cC$-graded component $B$. 
We consider the $B$-valued inner product 
$$
\langle x|\ y \rangle_B:= E'(x^*y),
$$
and let $\cE'_B$ be the right Hilbert $B$-module obtained as the separation completion of $A$ with respect to this inner-product. 
Faithfulness of $E'$ ensures there is an injective bounded map 
$$
\eta': A\to \cE'
$$
with dense range. 

There is a left $B\rtimes_{\text{alg}, F}\IA$-action on $\eta'(B\rtimes_{\text{alg}, F}\IA)$ by bounded right $B$-linear operators given by $x\rhd \eta'(y) := \eta'(xy)$. 
Therefore, the action of $x$ extends to a bounded operator on $\cE'$ which is readily seen to be adjointable.
That is, there is an embedding 
$
B\rtimes_{\text{alg}, F}\IA\hookrightarrow \End(\cE'_B).
$
We then define the {\bf reduced crossed-product \Cs-algebra of $B$ by $F$ under $\IA$} by 
\begin{align}
    B\rtimes_{r, F}\IA :=\overline{B\rtimes_{\text{alg}, F}\IA}\subset \End(\cE'_B). 
\end{align}
\noindent Finally, $E'$ extends to a unital completely positive faithful $B$-$B$ bimodular surjective map $E':B\rtimes_{r,F}\IA\to B$, yielding an irreducible \Cs-discrete extension.

In \cite[Theorem 1.1, Corollary 1.2]{doi:10.1142/S1793525325500256}, the first-named author and Nelson characterized unital irreducible \Cs-discrete inclusions in terms of an outer UTC action $F:\cC\acts B$ with a chosen connected \Cs-algebra object $\IA$, given by {\bf the diamond spaces}.
That is, for $c\in \cC,$ we have 
\begin{align}\label{eq:diamond}
\IA(c) \cong \rCorr({}_BF(c)_B\to {}_B\cE_B)^\diamondsuit:=\left\{f\in\rCorr({}_BF(c)_B\to {}_B\cE_B)|\ f(F(c))\subset \eta(A)\right\}.    
\end{align}
Such inclusions can therefore be expressed as a reduced crossed product
$$
(B\subset A) \cong (B\subset B\rtimes_{r,F}\IA).
$$
This class, therefore, includes all crossed products by outer actions of discrete (quantum) groups on unital \Cs-algebras, all irreducible finite-index inclusions, as well as certain semicircular systems \cite{2024arXiv240918161H}, and cores of Cuntz-Pimsner algebras \cite{2025arXiv250321515H}.

\subsection{Properly outer endomorphisms}\label{sec:quasi_product}
In this article, we consider properly outer endomorphisms of simple \Cs-algebras.
We use the following definition (see \cite{MR1234394},
\cite[Section 7]{MR1900138}, 
\cite[Section 2.2]{doi:10.1142/S0129055X24610026} for details).
\begin{defn}[See Section 1 of \cite{MR1234394}]\label{defn:prop_outer}
    An endomorphism $\rho$ of a \Cs-algebra $A$ is {\bf properly outer} if for every nonzero hereditary subalgebra $H$ of $A$,
    every $a\in A$,
    and every $\epsilon>0$,
    there exists a positive element $c$ in $H$ with $\|c\|=1$ satisfying $\|ca\rho(c)\|<\epsilon$.
\end{defn}
Proper outerness can be considered as a noncommutative analogue of freeness as follows. 
\begin{exa}\label{exa:top_free}
    Let $A=C_{0}(X)$ be a commutative \Cs-algebra and $\alpha\in{\rm Aut}(C_{0
    }(X))$.
    We say that $\alpha\colon \IZ\acts X$ is {\bf topologically free} if for any open subspace $\emptyset\neq U\subset X$ and $s\in\IZ\setminus\{0\}$,
    there exists $x\in U$ such that $\alpha_{s}(x)\neq x$,
    equivalently, there exists a positive norm-one function $f\in C_{0}(U)$ with $f\alpha_{s}(f)=0$. 
    Since every hereditary subalgebra of $C_{0}(X)$ has a form $C_{0}(U)$ for some open subspace $U$,
    we can easily check that $\{\alpha^{n}\}_{n\in\IZ\setminus\{0\}}$ are properly outer if and only if the associated action $\IZ\acts C_{0}(X)$ is topologically free.
\end{exa}
We mention the following lemma.
The proof of \cite[Lemma 3.2]{MR634163} works for properly outer endomorphisms.
According to the proof of \cite[Lemma 3.2]{MR634163},
the following statement holds for a self-adjoint element $a$ not necessarily positive.
\begin{lem}[Lemma 3.2 of \cite{MR634163}]\label{lem:prop_outer}
Let $a$ be a self-adjoint element of a \Cs-algebra $A$,
$x_{1},\dots,x_{n}$ be elements of $A$,
and $\rho_{1},\dots,\rho_{n}$ be properly outer endomorphisms of $A$.
Then,
for every $\epsilon>0$,
there is a positive element $c\in A$ with $\|c\|=1$ such that 
\[
\|cac\|>\|a\|-\epsilon,\;{\rm and}\;\|cx_{i}\rho_{i}(c)\|<\epsilon
\]
hold for every $i=1,\dots,n$.
\end{lem}

Proper outerness for endomorphisms of simple \Cs-algebras has been studied in different variations.
\begin{thm}[Lemma 1.1 of \cite{MR634163}]
    Every outer automorphism of a simple \Cs-algebra is properly outer.
\end{thm}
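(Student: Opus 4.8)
The plan is to establish the contrapositive: if an automorphism $\alpha$ of a simple \Cs-algebra $A$ is \emph{not} properly outer, then it is inner, in the sense that there is a nonzero $T\in M(A)$ with $\alpha(x)T=Tx$ for all $x\in A$; this contradicts outerness as defined in the theorem. Before the main argument I would record a reduction. If $A$ has a minimal projection then $A$ is elementary (isomorphic to the compact operators on some Hilbert space), and every automorphism of such an algebra is implemented by a unitary multiplier, so no outer automorphism exists and the statement is vacuous. Hence I may assume $A$ has no minimal projections; then no hereditary subalgebra has one either, so every nonzero hereditary subalgebra is infinite-dimensional and contains arbitrarily many pairwise orthogonal nonzero positive elements. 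This ``largeness'' ensures that a failure of proper outerness reflects a genuine algebraic coincidence rather than a dimensional accident.

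Negating Definition \ref{defn prop outer} produces a nonzero hereditary subalgebra $H\subseteq A$, an element $a\in A$, and $\epsilon>0$ with $\|c\,a\,\alpha(c)\|\ge\epsilon$ for every positive $c\in H$ of norm one. The first substantial step is to convert this uniform lower bound into a nonzero intertwiner in the enveloping von Neumann algebra $A^{**}$. This is the content of the von Neumann algebraic reformulation of proper outerness (Connes--Kallman on the $\mathrm{W}^*$ side, Olesen--Pedersen and Elliott on the \Cs side): the estimate above is equivalent to the normal extension $\overline\alpha$ of $\alpha$ failing to be properly outer on $A^{**}$, i.e. to the existence of a nonzero $\overline\alpha$-invariant central projection $z\in Z(A^{**})$ and a partial isometry $v\in zA^{**}$ with $v^*v=vv^*=z$ and $xv=v\,\overline\alpha(x)$ for all $x\in A$. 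Taking adjoints shows $v^*v$ and $vv^*$ commute with $A$, consistent with their centrality, and one reads off that $\overline\alpha$ is inner on the nonzero corner $zA^{**}$.

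The crux --- and the only place where simplicity of $A$ is truly needed --- is to upgrade this bidual partial isometry to an honest multiplier. In a simple algebra there are no nontrivial closed two-sided ideals, so $\alpha$ cannot be inner on one part and outer on another: the obstruction encoded by $z$ cannot localize to a proper ideal. Concretely, I would study the subsets of $A$ on which $v$ acts by landing back inside $A$ (the left and right multiplier domains of $v$ determined by $xv=v\,\overline\alpha(x)$), show these are closed and invariant enough to be ideals, and invoke simplicity to conclude they are all of $A$; this forces $v\in M(A)$, and after absorbing the central support it yields a unitary $T\in M(A)$ with $\alpha=\Ad(T)$. I expect this upgrade to be the hardest part, since a nonzero element of $A^{**}$ satisfying an intertwining relation can in principle have its ``mass'' spread diffusely across the center $Z(A^{**})$, and ruling this out --- showing the intertwiner normalizes $A$ rather than merely $A^{**}$ --- is precisely where the absence of ideals, together with the structure noted in the first paragraph, must be leveraged. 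Once $T\in M(A)$ is produced, it directly contradicts the hypothesis that $\{T\in M(A):\alpha(x)T=Tx\ \forall x\}=0$, completing the contrapositive.
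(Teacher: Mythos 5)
First, a remark on scope: the paper does not prove this statement at all --- it is imported verbatim as Lemma 1.1 of Kishimoto \cite{MR634163} --- so there is no in-paper proof to compare against. The closest relative in the paper is the proof of Theorem \ref{thm end}, which proceeds quite differently from your sketch: from the failure of proper outerness it extracts an irreducible representation $\pi$, an intertwining isometry $V\in(\pi\circ\rho,\pi)$ and the operator inequality (\ref{eq contradict}), then builds a crossed-product-type completion $D_r$ carrying a faithful conditional expectation onto $B$ that annihilates $V$, and applies that expectation to the inequality to reach $0\geq\delta\,\pi(hh^*)$.

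Your reduction to the non-elementary case is correct but not load-bearing, and the passage from the failure of Definition \ref{defn prop outer} to a nonzero intertwiner $v$ in $A^{**}$ supported under an invariant central projection $z$ is a genuine theorem (Olesen--Pedersen/Elliott), though note it requires separability, which the statement as quoted does not assume, and the version with the extra element $a$ inserted between $c$ and $\alpha(c)$ needs an additional argument. The real problem is the step you yourself identify as the crux. The set $L=\{x\in A:\ vx\in A\}$ is indeed a norm-closed two-sided ideal (using $vy=\overline{\alpha}^{-1}(y)v$ for $y\in A$), so simplicity gives $L=0$ or $L=A$; but you offer no argument that $L\neq 0$, and that is the entire analytic content of the theorem. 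A nonzero element of $A^{**}$ satisfying the intertwining relation has, a priori, no reason to multiply any nonzero element of $A$ back into $A$: for a non-type-I simple \Cs-algebra the center of $A^{**}$ is enormous, the central support of $v$ can be diffuse over it, and simplicity of $A$ alone cannot force $v$ to normalize $A$ rather than merely $A^{**}$ --- this is exactly the scenario you flag but do not exclude. Ruling it out requires returning to the quantitative lower bound $\|ca\alpha(c)\|\geq\epsilon$ and running a genuinely analytic argument (Kishimoto passes to a pure state and an irreducible representation where the intertwiner becomes a concrete unitary satisfying an inequality of the form (\ref{eq contradict}), and derives a numerical contradiction; the paper's Theorem \ref{thm end} does the analogue with a conditional expectation). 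As written, the decisive step of your argument restates the theorem rather than proving it, so the proposal has a genuine gap.
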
 

 In many cases of interest, the previous theorem has also been established, even by dropping the surjectivity of an endomorphism of $A$. 
 For a finite-index endomorphism $\rho\colon A\to A,$ being surjective is equivalent to having statistical dimension $d(\rho)=1$. 
 Indeed, given an expectation $E\colon A\to \rho(A)$ with $E(a)\geq a$ for all $a\in A_{+}$ we have $\|E(a)-a\|\leq \|E(E(a)-a)\| = 0.$ 
 This implies $E= \id_A, $ and $\rho$ is surjective.

For finite-index endomorphisms of finite depth which are not surjective, Izumi proved that proper outerness is automatic.
\begin{thm}[Theorem 7.5 of \cite{MR1900138}]
    Let $A$ be a simple $\sigma$-unital \Cs-algebra and $\rho$ be a finite-index irreducible nonsurjective endomorphism.
    If $\rho$ is of finite depth, then $\rho$ is properly outer. 
\end{thm}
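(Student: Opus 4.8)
The plan is to argue by contraposition, extracting a genuine intertwiner in $(\rho,\id)$ from a failure of proper outerness and then showing the hypotheses forbid it. First I would record that irreducibility together with $d(\rho)>1$ already forces $(\rho,\id)=0$: if $0\neq T\in(\rho,\id)$, that is $Ta=\rho(a)T$ for all $a\in A$, then taking adjoints gives $T^{*}\rho(a)=aT^{*}$, whence $T^{*}T\in A'\cap M(A)=\IC$ (as $A$ is simple) and $TT^{*}\in\rho(A)'\cap M(A)=\IC$ by irreducibility. A positive scalar multiple of $T$ is then an isometry $v$ whose range projection $vv^{*}$ is a scalar projection, hence $1$, so $v$ is unitary and $\rho=\Ad(v)$ is an inner automorphism with $d(\rho)=1$, a contradiction. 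It therefore suffices to show that if $\rho$ is \emph{not} properly outer, then $(\rho,\id)\neq0$.

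So suppose $\rho$ is not properly outer. By Definition \ref{defn prop outer} there are a nonzero hereditary subalgebra $H\subset A$, an element $a\in A$, and $\epsilon>0$ with $\|ca\rho(c)\|\geq\epsilon$ for every positive $c\in H$ of norm one. The next step is to promote this persistent ``coupling'' between $c$ and $\rho(c)$ into a von Neumann--algebraic intertwiner. Passing to the bidual $A^{**}$ with the normal extension $\tilde\rho$, a maximality argument in the style of Olesen--Pedersen shows that the displayed inequality is precisely an obstruction to $\tilde\rho$ being properly outer on the central support of $H$; it produces a nonzero partial isometry $v\in A^{**}$ supported under $H$ implementing $\tilde\rho$ on a nonzero $\tilde\rho$-invariant corner, i.e.\ a nonzero element of the bidual intertwiner space.

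The heart of the matter, and where finite index and finite depth enter, is to descend this bidual intertwiner to an honest $T\in(\rho,\id)\subset M(A)$. The minimal conditional expectation $E_{\rho}\colon A\to\rho(A)$ of Definition \ref{defn finite index endomorphism} (finite because $\rho$ has finite index) together with the standard solutions $R_{\rho},\overline{R}_{\rho}$ of Lemma \ref{conjugate} provide an averaging mechanism carrying $v$ into the relative commutant structure of the tower $\rho(A)\subset A\subset A_{1}\subset\cdots$ attached to $\rho$ (using Example \ref{exa basic construction} and Theorem \ref{thm p=e} to guarantee $A\subset A_{1}$, since $\rho(A)\cong A$ is simple). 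Finite depth guarantees that the higher relative commutants of this tower are finite dimensional and eventually stationary, supplying the compactness needed for the averaged element to converge to a \emph{nonzero} bimodular $T\in(\rho,\id)$. This contradicts the first paragraph, so $\rho$ is properly outer.

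The main obstacle is exactly this last descent. A failure of proper outerness only yields an intertwiner at the level of the bidual, living on a corner $zA^{**}$ and a priori far from $M(A)$; turning it into a genuine multiplier intertwiner requires controlling the sizes of the spaces $(\rho^{n},\rho^{n})$, and it is here that finite depth is indispensable to this particular argument, bounding those dimensions and forcing the averaged intertwiner to converge. A secondary technical point is that one must work with approximate units of $H$ under mere $\sigma$-unitality rather than separability. We note that the main theorem of the present paper removes the finite depth hypothesis by a different route, so finite depth should be seen as a technical convenience for this proof rather than an essential feature.
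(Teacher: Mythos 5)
There is a genuine gap at the heart of your argument, namely the ``descent'' step. What a failure of proper outerness actually yields (via the Olesen--Pedersen/Kishimoto-type maximality argument you allude to) is \emph{not} a nonzero element of $(\rho,\id)\subset M(A)$ but an isometry $V\in(\pi\circ\rho,\pi)\subset B(\cH)$ for some irreducible representation $\pi$, together with a quantitative operator inequality of the form (\ref{eq contradict}), i.e.\ $\pi(h)\bigl(\pi(a)V+V^{*}\pi(a^{*})+T\bigr)\pi(h^{*})\geq\delta\,\pi(hh^{*})$ on a hereditary subalgebra $H$. This intertwiner lives on the Hilbert space of one particular representation, and you give no argument for why averaging it with $E_{\rho}$ and the solutions $R_{\rho},\overline{R}_{\rho}$ of Lemma \ref{conjugate} produces a \emph{nonzero} element of $(\rho,\id)$. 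In fact the opposite happens: since $(\rho,\id)=0$ (as you correctly show in your first paragraph), any canonical conditional expectation onto $\pi(A)$ must kill $V$, so the averaged element is necessarily zero. The actual proof exploits precisely this vanishing rather than fighting it: one builds the \Cs-algebra $D$ generated by $\pi(A)$ and the intertwiner spaces $(\pi\circ\rho^{n},\pi)$, uses finite depth to see $D$ as a crossed product of $\pi(A)$ by the \emph{fusion} category generated by $[\rho]$, for which a faithful positive conditional expectation $E\colon D\to\pi(A)$ with $E(V)=0$ exists, and then applies $E$ to the displayed inequality (after passing to the Calkin algebra to dispose of $T$) to obtain $0\geq\delta\,\pi(hh^{*})$ for all $h\in H$, contradicting $H\neq 0$. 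This is exactly the template that Section 3 of the present paper follows and extends to infinite depth by tensoring with $\cO_{\infty}$.

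So the role of finite depth is not to ``bound the dimensions of $(\rho^{n},\rho^{n})$ so that an averaged intertwiner converges,'' but to guarantee that the graded components $B\cdot\cL_{t}$ of $D$ are linearly independent and that the grading projection $E_{0}$ onto the $t=e$ component is a well-defined, positive, faithful conditional expectation (cf.\ Lemma \ref{lem_alg}); without some such input this expectation can fail to exist on the reduced completion. Your contradiction structure --- (a) hypotheses force $(\rho,\id)=0$, (b) non-proper-outerness forces $(\rho,\id)\neq 0$ --- would indeed prove the theorem if (b) were established, but (b) is exactly the hard content and the proposed averaging mechanism cannot deliver it, since any element it produces lies in a space you have already shown to be zero. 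The correct replacement for (b) is the representation-level statement $(\pi\circ\rho,\pi)\neq 0$ together with the positivity inequality, and the contradiction must then be extracted by a conditional-expectation argument rather than by exhibiting a multiplier intertwiner.
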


Furthermore, the assumption of finite depth can be dropped, provided $A$ is purely infinite simple.

\begin{thm}[Corollary 3.4 of \cite{doi:10.1142/S0129055X24610026}]\label{thm:properly_outer}
    Let $A$ be a separable purely infinite simple \Cs-algebra and $\rho$ be a finite-index irreducible nonsurjective endomorphism,
    then $\rho$ is properly outer.
\end{thm}

%%%%%%%%%%%%%%%%%%%%%%%%%%%%%%%%%%%%%%%%%%%%%%%%%%%%%%%%%%%%%%%%%%%%%%%%%%%%%%%%%%%%%%%%%%%%%%%%%%%%%%
\section{Proof of Theorem \ref{thmalpha:end}}
The next theorem is our main result, which shows that the conclusion of \cite[Corollary 3.4]{doi:10.1142/S0129055X24610026} holds without assuming pure infiniteness.
\begin{thm}[{Theorem \ref{thmalpha:end}}]\label{thm:end}
 Let $A$ be a separable simple \Cs-algebra and $\rho\colon A\to A$ be a finite-index irreducible nonsurjective endomorphism.
 Then $\rho$ is properly outer.
\end{thm}

Let $A$ and $\rho$ be as above. 
Since $\rho$ is properly outer if and only if $\rho\otimes\id_{\IK}$ is properly outer, we may and do assume $A$ is stable.
Hence, $\rho$ generates a unitary tensor category in the category $\Sect(A)$ of sectors.
(See Section 4 of \cite{MR1900138}.)

To prove Theorem \ref{thmalpha:end}, we will adapt similar results and their proofs from \cite{MR1900138} to work in the \emph{infinite depth} scenario (i.e. we do not assume that $\rho$ generates a fusion category).
We shall not reproduce these results here, and moreover, we will therefore try to adjust our notation to resemble Izumi's notation from that article as closely as possible. 

We assume $\rho$ is not properly outer for the sake of contradiction.
As in the proof of \cite[Theorem 7.5]{MR1900138},
there is an irreducible representation $\pi\colon A\to B(\cH)$,
a norm-one element $a\in A$,
a nonzero hereditary subalgebra $H\subset A$,
an isometry $V\in(\pi\circ\rho,\pi)\subset B(\cH)$,
a compact operator $T\in\IK(\cH)$,
and $\delta>0$ satisfying
\begin{equation}\label{eq:contradict}
\pi(h)\left(\pi(a)V+V^{*}\pi(a^{*})+T\right)\pi(h^*)\geq\delta\pi(hh^{*})
\end{equation}
for all $h\in H$. 
The alert reader will notice that the statement of \cite[Theorem 7.5]{MR1900138} assumes finite depth. 
However, to obtain Inequality (\ref{eq:contradict}), that assumption was not used, and so it holds in the general UTC setting. 
We notice that this inequality was already applied in the possibly infinite-depth case of $\Rep(G)$ for a compact group $G$ by Izumi in \cite[Section 4]{doi:10.1142/S0129055X24610026}.

\begin{rmk}
    From the above statement,
    we see that the representation $\pi\circ\rho$ contains $\pi$ as a direct summand.
    Roughly speaking,
    as in Example \ref{exa:top_free},
    non-proper outerness implies non-freeness of the action on the space of irreducible representations.
    In fact,
    it is known that if $A$ is separable simple \Cs-algebra and $\rho\in \End(A)$ is not properly outer,
    then every irreducible representation $\sigma$ of $A$ is contained in $\sigma\circ\rho$ (see \cite[Lemma 2.2]{doi:10.1142/S0129055X24610026}). 
\end{rmk}
We introduce the following notation:
\begin{nota}\label{notation:Ddiamond}
\begin{enumerate}[label=(\roman*)]
    \item We consider the purely infinite and simple \Cs-algebra \[B:=\cO_{\infty}\otimes \pi(A)\subset \cO_{\infty}\otimes B(\cH).\footnote{Tensoring by any Kirchberg algebra would do. However, we chose $\cO_\infty$ to keep the notation as simple as possible.}\]
    \item We consider the \Cs-subalgebra of $\cO_\infty\otimes B(\cH)$ generated by $B(1_{\cO_\infty}\otimes V)$  
    \begin{align*}
    D:=C^*\{B\cdot(1_{\cO_\infty}\otimes V)\}\subset \cO_\infty\otimes B(\cH),
    \end{align*}
    which contains the dense $*$-algebra $D_{\rm alg}$ given by 
    \begin{align*}
    {\rm span}_{\IC}\left\{b_{1}(1_{\cO_\infty}\otimes V)^{k_{1}}(1_{\cO_\infty}\otimes V^*)^{l_{1}}b_{2}\cdots b_{n}(1_{\cO_\infty}\otimes V)^{k_{n}}(1_{\cO_\infty}\otimes V^*)^{l_{n}}b_{n+1}\Bigg|\ b_{i}\in B\right\}.
    \end{align*}

    \item Let $\{\rho_{t}\}_{t\in\cT}$ be a complete system of representatives of irreducible sectors of $B$.
    We assume that for each $t\in\cT$, there is $\overline{t}\in\cT$ such that $\overline{\rho}_{t}=\rho_{\overline{t}}$.
    We put $\rho_{e}:=\id_{B}$ and $\rho_{t_{0}}:=\id_{\cO_{\infty}}\otimes\rho$.

    \item For each $t\in\cT$, set 
    $$\cL_{t}:=(\iota\circ\rho_{t},\iota)\subset M(D),$$ 
    where 
    $$\iota\colon B\to D$$ 
    is the trivial inclusion: 
    \begin{align*}
    B= (1_{\cO_\infty}\otimes V)^*(1_{\cO_\infty}\otimes V)B &= (1_{\cO_\infty}\otimes V)^*\rho_{t_0}(B)(1_{\cO_\infty}\otimes V)\\&\subset (1_{\cO_\infty}\otimes V)^*B(1_{\cO_\infty}\otimes V)\subset D.
    \end{align*}
    By Lemma 5.2 of \cite{MR1900138}, $\cL_{t}$ is finite dimensional, and we set $\dim \cL_{t}=n_{t}$. 
    Notice that $(1_{\cO_\infty}\otimes V)\in \cL_{t_0}.$ 
    
    \item We put $d(t):=d(\rho_{t})$ and $R_{t}:=R_{\rho_{t}}$ (see Lemma \ref{conjugate}) for each $t\in\cT$.
    Since $\overline{R}_{t}^{*}\rho_{t}(R_{t})=R_{t}^{*}\rho_{\overline{t}}(\overline{R}_{t})=d(t)^{-1}$, the following antilinear maps are mutually inverse:
    \[\cL_{t}\ni S\mapsto \sqrt{d(t)}S^{*}\overline{R}_{t}\in\cL_{\overline{t}},\quad\cL_{\overline{t}}\ni T\mapsto \sqrt{d(t)}T^{*}R_{t}\in\cL_{t}.\] We may and do assume that $R_{\overline{t}}=\overline{R}_{t}$.\label{item:Rd}
    \item Taking any irreducible representation $\cO_{\infty}\subset B(\cK)$, 
    we see that $\cO_{\infty}\otimes\pi(A)\subset B(\cK\otimes \cH)$ is irreducible. 
    Since we have $B\subset D\subset M(D)\subset B(\cK\otimes \cH)$,
    we get $M(D)\cap B^{\prime}=\IC$.
    Now, for each $t\in\cT$, we consider the following $\IC$-valued inner-products on $\cL_t:$ 
    \begin{align*}
        \langle W|S\rangle_{t}1_{M(D)}&:=W^{*}S,\\
{}_{t}\langle S,W\rangle1_{M(D)}&:=d(t)R_{\overline{t}}^{*}SW^{*}R_{\overline{t}}\qquad \forall S,W\in\cL_{t}.
    \end{align*}
    \item For each $t\in\cT$,
    let $\{V(t)_{i}\}_{i=1}^{n_{t}}$ be an orthonormal basis of $\cL_{t}$ (resp. $\{\tilde{V}(t)_{i}\}_{i}$) with respect to the right (resp. left) inner product $\langle\cdot|\cdot\rangle_{t}$ (resp. ${}_{t}\langle\cdot,\cdot\rangle$). 
    \item We put 
    $$D^{\diamondsuit}:= {\rm span}_{\IC}\{B\cdot \cL_t\}_{t\in \cT}.$$
\end{enumerate}    
\end{nota}

\begin{lem}\label{lem_alg}
The following hold.
    \begin{enumerate}
        \item $D^{\diamondsuit}$ is a dense $*$-algebra of $D$.
        \item If we have $\sum_{t\in F}\sum_{i=1}^{n_{t}}x(t)_{i}V(t)_{i}=0$ for some finite set $F\subset\cT$ and $x(t)_{i}\in B$,
        then we get $x(e)=0$.
        \item The linear map $E_{0}\colon D^{\diamondsuit}\to B$ defined as follows is completely positive and faithful:
        \[E_{0}\bigg(\sum_{t\in F}\sum_{i=1}^{n_{t}}x(t)_{i}V(t)_{i}\bigg):=x(e)\;\text { for all }\;F\subset_{\rm fin}\cT\;\text{ and }\;x(t)_{i}\in B.\]
        \item The following holds for all $s,t\in\cT$ and $x, y\in B$:
        \begin{equation*}
             E_{0}(x\tilde{V}(t)_{i}\tilde{V}(s)_{j}^{*}y)    =\delta_{s,t}\delta_{i,j}d(s)^{-1}xy.
        \end{equation*}
    \end{enumerate}
\end{lem}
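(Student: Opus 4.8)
The plan is to regard $D^\diamondsuit=\sum_{t\in\cT}B\cL_t$ as the algebraic crossed product of $B$ by the charged intertwiner spaces $\cL_t$, graded by the fusion rules of $\Sect(B)$ generated by $[\rho_{t_0}]$, and to realize $E_0$ as the projection onto the unit fibre $B=B\cL_e$ (recall $\cL_e=M(D)\cap B'=\IC$, so that $V(e)_1=1$). Two bookkeeping facts drive everything: first, $S\in\cL_t$ obeys $Sb=\rho_t(b)S$, whence $\cL_tB\subseteq B\cL_t$; second, $B$ is an ideal in $M(B)$ and $D$ is an ideal in $M(D)$, so $B\cdot M(B)\subseteq B$ and $B\cL_t\subseteq D$.

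For part (1), closure under multiplication reduces to treating $\cL_s\cL_t$, since $\bigl(\sum_i x(s)_iV(s)_i\bigr)\bigl(\sum_j y(t)_jV(t)_j\bigr)=\sum_{i,j}x(s)_i\rho_s(y(t)_j)\,V(s)_iV(t)_j$ has $B$-valued coefficients. Each $V(s)_iV(t)_j\in(\iota\rho_s\rho_t,\iota)$; decomposing $[\rho_s\rho_t]$ into irreducibles (\cite[Lemma 4.1]{MR1900138}) by isometries $w\in(\rho_s\rho_t,\rho_u)\subseteq M(B)$ with $\sum_{u,w}ww^*=1$, one checks $w^*V(s)_iV(t)_j\in\cL_u$, so that $V(s)_iV(t)_j=\sum_{u,w}w\,\bigl(w^*V(s)_iV(t)_j\bigr)\in\sum_u M(B)\cL_u$; the leading $B$-coefficient then absorbs $w\in M(B)$ through $B\cdot M(B)\subseteq B$, landing the product in $D^\diamondsuit$. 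Closure under $*$ is charged-field conjugation: using the solutions $R_{\rho_t},\bar R_{\rho_t}$ of the conjugate equations (Lemma \ref{conjugate}), realized in $M(B)\subseteq M(D)$, the standard Doplicher--Roberts/Frobenius calculus rewrites $S^*\in(\iota,\iota\rho_t)$ as a $B$-combination of intertwiners in $\cL_{\bar t}$, giving $(B\cL_t)^*\subseteq B\cL_{\bar t}\subseteq D^\diamondsuit$. Density is then automatic: $D^\diamondsuit$ is a $*$-algebra containing $B=B\cL_e$ and $B(1_{\cO_\infty}\otimes V)\subseteq B\cL_{t_0}$, so compressing by an approximate unit of $B$ puts $1_{\cO_\infty}\otimes V$ and its adjoint in $\overline{D^\diamondsuit}$, i.e. a generating set of $D$.

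For parts (2) and (3) I would build $E_0$ together with its properties by a GNS/Hilbert-module argument, so that (2) appears as the assertion that the fusion grading descends to $M(D)$. The idea is to equip the free right $B$-module on $\{V(t)_i\}$ with the $B$-valued form whose entries are determined through (1) by the products $V(s)_i^*V(t)_j$: on a single sector $V(t)_i^*V(t)_j=\delta_{ij}1$ (the $V(t)_i$ are $\langle\cdot|\cdot\rangle_t$-orthonormal), while for inequivalent irreducibles $s\neq t$ the unit-fibre part of $V(s)_i^*V(t)_j$ vanishes, as it lies in $(\rho_t,\rho_s)=0$. Once this form is shown to be positive and faithful, its separation-completion is a right Hilbert $B$-module $\cE_0$ carrying a faithful adjointable action of $D$, with cyclic vector $\Omega$ the image of $1$; the vacuum compression $E_0(y):=\langle\Omega\,|\,y\,\Omega\rangle$ is then manifestly completely positive, fixes $B$, and annihilates $B\cL_t$ for $t\neq e$. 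Matching $E_0$ with the stated formula gives (3); since $E_0$ is defined on honest operators, $E_0(0)=0$ forces $x(e)=0$, which is (2); and faithfulness of $E_0$ follows from faithfulness of the $D$-action on $\cE_0$, using $M(D)\cap B'=\IC$ and pure infiniteness of $B=\cO_\infty\otimes\pi(A)$. This is the non-unital, infinite-depth analogue of the faithful expectation $E'$ onto the unit fibre of the reduced crossed product $B\rtimes_{r,F}\IA$; alternatively one identifies $D$ with such a crossed product, for the UTC generated by $[\rho_{t_0}]$ and the canonical connected \Cs-algebra object $\bigoplus_t\cL_t$, and imports $E'$ directly.

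The main obstacle is precisely the positivity and faithfulness of this unit-fibre form, equivalently the well-definedness and faithfulness of $E_0$. In finite depth $\cT$ is finite and this is a finite-dimensional computation with the standard solutions $R_{\rho_t}$; here $\rho$ generates an infinite UTC, so no finite matrix reduction is available. I would establish positivity sector-by-sector from the normalization $\bar R_{\rho_t}^*\rho_t(R_{\rho_t})=d(t)^{-1}$ of Lemma \ref{conjugate}, control the finitely many cross terms appearing in any given $y^*y$ by semisimplicity of $\Sect(B)$ (\cite[Lemma 4.1]{MR1900138}) together with $M(D)\cap B'=\IC$, and finally upgrade positivity to faithfulness using that $B$ is purely infinite simple and faithfully represented with $D\subseteq B(\cK\otimes\cH)$.
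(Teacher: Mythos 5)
Your treatment of part (1) matches the paper's (fusion of the $\cL_t$'s via isometries in $M(B)$, conjugation via the solutions of the conjugate equations, absorption of $M(B)$ into the $B$-coefficients), and your positivity computation for part (3) — diagonal terms controlled by the normalization $\overline{R}_{\rho_t}^{*}\rho_t(R_{\rho_t})=d(t)^{-1}$, cross terms killed because $\id_B$ does not occur in $\rho_{\overline{s}}\circ\rho_t$ for $s\neq t$ — is essentially the computation the paper carries out. But there is a genuine gap at part (2), which is the heart of the lemma. Part (2) is precisely the statement that the coefficient $x(e)$ is determined by the operator $\sum_t\sum_i x(t)_iV(t)_i\in D\subset\cO_\infty\otimes B(\cH)$, i.e.\ that the canonical surjection $\bigoplus_t B\otimes\cL_t\to D^{\diamondsuit}$ is injective on the $e$-fibre. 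Your GNS construction builds a form on the \emph{free} module $\bigoplus_t B\otimes\cL_t$ and then declares ``since $E_0$ is defined on honest operators, $E_0(0)=0$ forces $x(e)=0$''; but for the vacuum compression to be defined on honest operators you must first know that an element of $D^{\diamondsuit}$ which vanishes in $B(\cK\otimes\cH)$ acts as zero on $\cE_0$, and the left regular action of the abstract fusion algebra on the free module sees only the coefficients, not the operator. So the argument is circular: descending the GNS representation to the concrete algebra $D$ presupposes exactly the linear independence that (2) asserts. (In the paper this descent issue is the content of the later Lemma \ref{lem max}, whose proof itself relies on the technique of Lemma \ref{lem_alg}~(2).)

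The missing idea is the one that motivates tensoring with $\cO_\infty$ in the first place: since $B=\cO_\infty\otimes\pi(A)$ is separable, purely infinite and simple, Theorem \ref{thm properly outer} (for $d(\rho_t)>1$) together with \cite[Lemma 1.1]{MR634163} (for outer automorphisms) shows that every $\rho_t$ with $t\neq e$ is \emph{properly outer}. One then normalizes so that $x(e)$ is positive of norm one and applies the Kishimoto-type averaging Lemma \ref{lem prop outer} to produce a positive norm-one $b\in B$ with $\|bx(e)b\|>1-\epsilon$ while $\|bx(t)_i\rho_t(b)\|<\epsilon$ for all $t\neq e$; since $bxb=\sum_t\sum_i bx(t)_i\rho_t(b)V(t)_i$, this forces $0=bxb\approx_{C\epsilon}bx(e)b\neq 0$, a contradiction. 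Your proposal invokes pure infiniteness of $B$ only vaguely at the end ``to upgrade positivity to faithfulness'', but faithfulness of $E_0$ on $D^{\diamondsuit}$ actually follows from the faithful conditional expectations $x\mapsto\rho_s(R_s^{*}\rho_{\overline{s}}(x)R_s)$ without pure infiniteness; it is the well-definedness step (2) where pure infiniteness, via proper outerness, is indispensable.
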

Item (1) follows similarly to the proof of \cite[Lemma 7.1]{MR1900138}.
Since our claim does not require a detailed computation, we also give a brief proof for completeness.
\begin{proof}  
    (1): We see that $BD^{\diamondsuit}B \subset D^{\diamondsuit}$. 
    Since $D^{\diamondsuit}$ contains the generators of $D_{\rm alg}$ (see Notation \ref{notation:Ddiamond} (ii)) as a $*$-algebra,
    it is enough to show that $D^{\diamondsuit}$ is closed under taking $*$ and multiplication of $\cL_{t}$.
    Let $R_{t}\in(\overline{\rho}_{t}\circ\rho_{t},\;\id_{B})$ and $\overline{R}_{t}\in(\rho_{t}\circ\overline{\rho}_{t},\;\id_{B})$ be the isometries from Notation \ref{notation:Ddiamond} (v),
    then we have
    \[
        \cL_{t}^{*}B
        =\cL_{t}^{*}(d(t)^{-1}B)
        =\cL_{t}^{*}\rho_{t}(R_{t}^{*})\overline{R}_{t}B
        =R_{t}^{*}(\cL_{t}^{*}\overline{R}_{t})B
        =R_{t}^{*}\cL_{\overline{t}}B
        =R_{t}^{*}\rho_{\overline{t}}(B)\cL_{\overline{t}}=B\cL_{\overline{t}}.
    \]
    Here, the second equality follows from $d(t)^{-1}=\overline{R}_{t}^{*}\rho_{t}(R_{t})$, and the last equality follows since $(d(t)R_{t}^{*},d(t)R_{t})$ is a quasi-basis of $\rho_{\bar{t}}(B)\subset B$ (see Lemma \ref{conjugate}).
    For the third equality,
    note that the nondegenerate endomorphism $\rho_{t}$ of $B$ extends to  
    the multiplier algebra, which is also denoted by $\rho_{t}$.
    It is straightforward to verify that the commutation relation between $\cL_{t}$ and $B$ extends to that between $\cL_{t}$ and $M(B)$. 
    Indeed, for any $T\in M(B),$ $b,b'\in B$ and $V\in \cL_{t}$, we have 
    \[
    \rho_{t}(b)(\rho_{t}(T)V-VT)b'=0.
    \]
    Since we have $B=\overline{B\rho_t(B)}$ and $B$ acts nondegenerately on $\cK\otimes\cH$ (see Notation \ref{notation:Ddiamond}(vi)), this implies $\rho_t(T)V=VT$ for all $T\in M(B)$.
    Hence,
    we get $(D^{\diamondsuit})^{*}=D^{\diamondsuit}$.
    Since for each $s,t\in\cT$,
    the composition $\rho_{s}\circ\rho_{t}$ is decomposed into a finite sum of irreducible endomorphisms in $\{\rho_{r}\}_{r\in\cT}$ by Lemma \ref{lem:semisimple_sectors},
    we have \[\cL_{s}\cL_{t}\subset {\rm span}\big(\bigcup_{r\in \cT}M(B)\cL_{r}\big).\]
    % By the definition% of $D^{\diamondsuit}$,
    % this implies
    Hence,
    we get
    \[\cL_{t}D^{\diamondsuit},\;D^{\diamondsuit}\cL_{t}\subset D^{\diamondsuit}\]
    for every $t\in\cT$.
    Thus, $D^\diamondsuit$ is a $*$-algebra containing $D_{\rm alg}.$
\smallskip 

    To show (2),
    we assume $x:=\sum_{t\in F}\sum_{i=1}^{n_{t}}x(t)_{i}V(t)_{i}=0$ and $x(e)\neq0$.
    Replacing $x$ with $\frac{1}{\|x(e)\|^{2}}x(e)^{*}x$,
    we may assume $x(e)$ is a positive norm-one element of $B$.
    By Lemma \ref{lem:prop_outer} and Theorem \ref{thm:properly_outer},
    for any $\epsilon>0$,
    there exists a positive norm-one element $b\in B$ satisfying the following:
    \begin{align*}
        &bxb=\sum_{t\in F}\sum_{i=1}^{n_{t}}bx(t)_{i}\rho_{t}(b)V(t)_{i}\approx_{\epsilon}bx(e)b\; \text{ and }\\
        &\|bx(e)b\|>1-\epsilon.
    \end{align*}
    This contradicts $x=0$,
    then we get the claim.
\smallskip

    For (3),
    we show $E_0$ is completely positive.
    Thanks to (1) and (2),
    the linear map $E_{0}\colon D^{\diamondsuit}\to B$ is well-defined.
    Take an arbitrary $n\in \IN$, and $d=[d_{k,l}]_{k,l=1}^{n}\in M_{n}\otimes D^{\diamondsuit}$,
    with $d_{k,l}=\sum_{t\in F}\sum_{i=1}^{n_{t}}x(t)_{i}^{(k,l)}V(t)_{i}\in D^\diamondsuit$ for each $k, l= 1,2,\hdots n.$
    Let $d^{*}d=[f_{k,l}]_{k,l=1}^{n}$ then we get the following:
    \begin{align*}
      f_{k,l}&=\sum_{m=1}^{n}\sum_{s,t\in F}\sum_{i,j}V(s)_{i}^{*}x(s)_{i}^{(m,k)*}x(t)_{j}^{(m,l)}V(t)_{j}\\
      &=\sum_{m=1}^{n}\sum_{s,t\in F}\sum_{i,j}\frac{1}{d(s)}R_{s}^{*}V(s)_{i}^{*}R_{\overline{s}}x(s)_{i}^{(m,k)*}x(t)_{j}^{(m,l)}V(t)_{j}\\
      &=\sum_{m=1}^{n}\sum_{s,t\in F}\sum_{i,j}\frac{1}{d(s)}R_{s}^{*}\rho_{\overline{s}}\left(x(s)_{i}^{(m,k)*}x(t)_{j}^{(m,l)}\right)V(s)_{i}^{*}R_{\overline{s}}V(t)_{j}.
    \end{align*}
    Fixing $s,t\in F$,
    we consider the irreducible decomposition $\rho_{\overline{s}}\circ\rho_{t}(\cdot)=\sum_{r\in\cT}\sum_{p=1}^{N_{\overline{s},t}^{r}}S_{r,p}\rho_{r}(\cdot)S_{r,p}^{*}$,
    where $N_{\overline{s},t}^{r}=\dim(\rho_{\overline{s}}\circ\rho_{t},\;\rho_{r})$ and $\{S_{r,p}\mid r\in \cT\;\text{ with }\;N_{\overline{s},t}^{r}\neq0, 1\leq p\leq N_{\overline{s},t}^{r} \}$ is a finite family of isometries in $M(B)$ with the Cuntz relation $\sum_{r,p}S_{r,p}S_{r,p}^{*}=1$.
    We see that $S_{r,p}^{*}V(s)_{i}^{*}R_{\overline{s}}V(t)_{j}$ are contained in $\cL_{r}$ for all $i,j,p$,
    and $V(s)_{i}^{*}R_{\overline{s}}V(t)_{j}=\sum_{r,p}S_{r,p}\left(S_{r,p}^{*}V(s)_{i}^{*}R_{\overline{s}}V(t)_{j}\right)$.
    When $s\neq t$,
    $N_{\overline{s},t}^{e}=0$ holds.
    When $s=t$,
    we have $N_{\overline{s},s}^{e}=1$ and $S_{e,1}=R_{s}$.
    Hence,
    we have
    \begin{align*}
        E_{0}(f_{k,l})&=\sum_{m=1}^{n}\sum_{s\in F}\sum_{i,j}\frac{1}{d(s)}R_{s}^{*}\rho_{\overline{s}}\left(x(s)_{i}^{(m,k)*}x(s)_{j}^{(m,l)}\right)R_{s}\left(R_{s}^{*}V(s)_{i}^{*}R_{\overline{s}}V(s)_{j}\right)\\
        &=\sum_{m=1}^{n}\sum_{s\in F}\sum_{i}R_{s}^{*}\rho_{\overline{s}}\left(x(s)_{i}^{(m,k)*}x(s)_{i}^{(m,l)}\right)R_{s}\\
        &=\sum_{s\in F}\sum_{i}R_{s}^{*}\rho_{\overline{s}}\bigg(\sum_{m=1}^{n}x(s)_{i}^{(m,k)*}x(s)_{i}^{(m,l)}\bigg)R_{s}.
    \end{align*}
    Therefore,
    we get $(\id_{M_n}\otimes E_{0})(d^{*}d)\geq0$ and $E_{0}$ is completely positive.

    To complete (3),
    we show $E_{0}$ is faithful.
    For $d=\sum_{t\in F}\sum_{i=1}^{n_{t}}x(t)_{i}V(t)_{i}\in D^{\diamondsuit}$,
    the computations in the previous paragraph show 
    \[E_{0}(d^*d)=\sum_{s\in F}\sum_{i}R_{s}^{*}\rho_{\overline{s}}\left(x(s)_{i}^{*}x(s)_{i}\right)R_{s}.\]
   As stated in Lemma \ref{conjugate},
    we see that 
    \[E_{s}\colon B\ni x\mapsto\rho_{s}\left(R_{s}^{*}\rho_{\overline{s}}\left(x\right)R_{s}\right)\in\rho_{s}\left(B\right)\]
    is a faithful conditional expectation.
    If $d\neq0$,
    then there are $s_{0}\in F$ and $1\leq i\leq n_{s_{0}}$ with $x(s_{0})_{i}\neq0$.
    Hence,
    we get $E_{0}(d^*d)\geq R_{s_{0}}^{*}\rho_{\overline{s_{0}}}\left(x(s_{0})_{i}^{*}x(s_{0})_{i}\right)R_{s_{0}}\gneq0$.
    This shows the claim.
\smallskip
    We prove (4). 
    When $s\neq t$, in the same way as in the proof of (1),
    we have 
    \[ x\tilde{V}(t)_{i}\tilde{V}(s)_{j}^{*}y\in B\cL_{t}\cL_{s}^{*}B\subset B\cL_{t}\cL_{\overline{s}}\subset{\rm span}\bigg(\bigcup_{r\in\cT,\,(\rho_{r},\,\rho_{t}\circ\rho_{\bar{s}})\neq0}B\cL_{r}\bigg).\]
    Since $(\id,\,\rho_{t}\circ\rho_{\bar{s}})=0$,
    it follows $E_{0}(x\tilde{V}(t)_{i}\tilde{V}(s)_{j}^{*}y)=0$.
    Suppose $s=t$. 
    As in the proof of (3), consider the irreducible decomposition $\rho_{s}\circ\rho_{\bar{s}}(\cdot)=\sum_{k=1}^{m}\sum_{p=1}^{N_{s,\bar{s}}^{r_{k}}}S_{r_{k},p}\rho_{r_{k}}(\cdot)S_{r_{k},p}^{*}$ of $\rho_{s}\circ\rho_{\bar{s}}$.
    Note that $\{S_{r_{k},\,p}\}_{k\,,p}$ is a finite family of isometries in $M(B)$ with the Cuntz relation, and we have $N_{s,\bar{s}}^{e}=1$ and $S_{e,1}=\overline{R}_{s}$.
    Hence we get
    \begin{align*}
        x\tilde{V}(s)_{i}\tilde{V}(s)_{j}^{*}y
        =&d(s)x\tilde{V}(s)_{i}\tilde{V}(s)_{j}^{*}\rho_{s}(R_{s}^{*})\overline{R}_{s}y\\
        =&d(s)x\rho_{s}(R_{s}^{*})\tilde{V}(s)_{i}\tilde{V}(s)_{j}^{*}\overline{R}_{s}y\\
        =&\sum_{k,p}d(s)x\rho_{s}(R_{s}^{*})S_{r_{k},p}\left(S_{r_{k},p}^{*}\tilde{V}(s)_{i}\tilde{V}(s)_{j}^{*}\overline{R}_{s}\right)y.
    \end{align*}
    Since we have $S_{r_{k},p}^{*}\tilde{V}(s)_{i}\tilde{V}(s)_{j}^{*}\overline{R}_{s}\in\cL_{r_{k}}$ and $S_{e,1}=\overline{R}_{s} = R_{\overline{s}}$,
   the following holds: 
    \[E_{0}(x\tilde{V}(s)_{i}\tilde{V}(s)_{j}^{*}y)=d(s)x\rho_{s}(R_{s}^{*})\overline{R}_{s}\left(\overline{R}_{s}^{*}\tilde{V}(s)_{i}\tilde{V}(s)_{j}^{*}\overline{R}_{s}\right)y=d(s)^{-1}{}_{s}\big\langle\tilde{V}(s)_{i},\tilde{V}(s)_{j}\big\rangle xy.\]
    Consequently, (4) follows.
\end{proof}

\begin{rmk}\label{rmk:Ddiamond}
     Thanks to Lemma \ref{lem_alg}, we get 
     $$
     D^{\diamondsuit}\cong\bigoplus_{t\in\cT} B\otimes \cL_t
     $$
     as a $B$-$B$ bimodule, similar to the setting of Equation (\ref{eqn:AlgCrossedProduct}). 
     To see this, consider the orthonormal basis $\{$\raisebox{-0.2ex}{$\tilde{V}(t)_{i}$}$\}_{i}$ of each $\cL_{t}$ with respect to the left inner product ${}_{t}\langle\cdot,\cdot\rangle$.
     If a finite sum $d=\sum_{t}\sum_{i}x(t)_{i}$\raisebox{-0.2ex}{$\tilde{V}(t)_{i}$}$\in D^{\diamondsuit}$ is equal to zero,
     then by Lemma \ref{lem_alg} (4), we get \raisebox{-0.2ex}{$0=d(s)E_{0}(d\tilde{V}(s)_{j}^{*})=x(s)_{j}$} for every $s$ and $j$.
     This implies that the canonical surjective $B$-$B$ bilinear map $\bigoplus_{t\in\cT} B\otimes \cL_t\to D^{\diamondsuit}$ is injective.
\end{rmk}

Let $\cE$ be the \Cs-correspondence over $B$ associated with $E_{0}$ and $e_{0}\in \End(\cE_{B})$ be the Jones projection (see Example \ref{exa:basic_construction}).
Although  $D^{\diamondsuit}$ is not a \Cs-algebra,
$\cE$ can be defined by the completion of $D^{\diamondsuit}$ with respect to the norm $\|d\|_{\cE}:=\|E_{0}(d^{*}d)\|^{\frac{1}{2}}$.
Consider the canonical $*$-representation 
$$
\pi_{r}:D^{\diamondsuit}\to \End(\cE_{B}).
$$
We put 
$$
D_{r}:=\overline{\pi_{r}(D^{\diamondsuit})}\subset\End(\cE_{B}).
$$ 
There is a conditional expectation extending $E_0$ 
$$
E\colon D_{r}\to B,     
$$
determined by 
$$
E(x)e_{0}=e_{0}xe_{0}\qquad \forall x\in D_{r}.
$$
Here,
we remark that \[e_{0}D_{r}e_{0}=\overline{e_{0}\pi_{r}(D^{\diamondsuit})e_{0}}=\overline{\pi_{r}(B)}e_{0}=\pi_{r}(B)e_{0}\cong B.\]
Considering the orthonormal basis $\{\tilde{V}(t)_{i}\}_{i=1}^{n_{t}}$ of $\cL_{t}$ with respect to the left inner product $_{t}\langle\cdot,\cdot\rangle$,
we now show that the following operators $\{P_{t}\}_{t\in\cT}$ are pairwise orthogonal projections: \[P_{t}:=\sum_{i=1}^{n_{t}}d(t)\pi_{r}(\tilde{V}(t)_{i})^{*}e_{0}\pi_{r}(\tilde{V}(t)_{i})\in\End({}_{B}\cE_{B}).\footnote{Each element $\tilde{V}(t)_{i}$ is not contained in $D^{\diamondsuit}$.
However, taking approximate units $(b_{n})_{n}$ of $B$,
it is routine to show that $(\pi_{r}(\tilde{V}(t)_{i}b_{n}))_{n}$ converges to an isometry in the strong topology of $\End(\cE_{B})$. For simplicity, this limit is denoted by $\pi_{r}(\tilde{V}(t)_{i})$.}\]

\begin{lem}
Let $\eta\colon D^{\diamondsuit}\to\cE$ be the canonical inclusion map.
Then each $P_{t}\in\End({}_{B}\cE_{B})$ is the projection onto $\eta(\cL_{t}^{*}B)$.
Moreover, $\{P_{t}\}_{t\in\cT}$ is a family of pairwise orthogonal projections.
\end{lem}
\begin{proof}
For each $s,t\in\cT$ and $x,y\in B$, thanks to Lemma \ref{lem_alg} (4), we have
\[
\langle\eta(x)\mid e_{0}\pi_{r}(\tilde{V}(t)_{i}\tilde{V}(s)_{j}^{*})e_{0}\eta(y)\rangle
=E_{0}(x^{*}\tilde{V}(t)_{i}\tilde{V}(s)_{j}^{*}y)
=d(s)^{-1}\delta_{s,t}\delta_{i,j}x^{*}y.
\]
This implies
\[
e_{0}\pi_{r}(\tilde{V}(t)_{i}\tilde{V}(s)_{j}^{*})e_{0}
=d(s)^{-1}\delta_{s,t}\delta_{i,j}e_{0}.
\]
Hence we obtain
\begin{align*}
P_{t}P_{s}
&=\sum_{i=1}^{n_{t}}\sum_{j=1}^{n_{s}}
d(t)d(s)\,\pi_{r}(\tilde{V}(t)_{i})^{*}
e_{0}\pi_{r}(\tilde{V}(t)_{i}\tilde{V}(s)_{j}^{*})e_{0}\pi_{r}(\tilde{V}(s)_{j})\\
&=\delta_{s,t}\sum_{i,j=1}^{n_{t}}
\delta_{i,j}d(t)\pi_{r}(\tilde{V}(t)_{i}^{*})
e_{0}\pi_{r}(\tilde{V}(t)_{j})
=\delta_{s,t}P_{t}.
\end{align*}
Thus $P_{t}=P_{t}^{*}$ is a projection and $P_{t}\perp P_{s}$ for all $s\neq t$.
Since the map $B\ni b\mapsto \eta(b)\in\cE$ is bounded and $B$-$B$ bilinear,
by Lemmata \ref{lem:orthogonal_summand} and \ref{lem:adjointable}, $\eta(B)$ is closed in $\cE$.
Hence $e_{0}$ is the projection onto
\[
\overline{e_{0}\eta(D^{\diamondsuit})}
=\overline{\eta(B)}
=\eta(B).
\]
This implies
\[
P_{t}\cE
\subset\mathrm{span}_{i}\,\pi_{r}(\tilde{V}(t)_{i})^{*}e_{0}\pi_{r}(\tilde{V}(t)_{i})\cE
\subset\mathrm{span}_{i}\,\pi_{r}(\tilde{V}(t)_{i})^{*}\eta(B)
=\eta(\cL_{t}^{*}B).
\]
Conversely, for any $x\in B$ and $1\le j\le n_{t}$, we have
\[
P_{t}\eta(\tilde{V}(t)_{j}^{*}x)
=\sum_{i=1}^{n_{t}}d(t)\pi_{r}(\tilde{V}(t)_{i}^{*})
e_{0}\pi_{r}(\tilde{V}(t)_{i}\tilde{V}(t)_{j}^{*})
e_{0}\eta(x)
=\eta(\tilde{V}(t)_{j}^{*}x).
\]
Therefore, $P_{t}$ is the projection onto $\eta(\cL_{t}^{*}B)$.
\end{proof}

Hereafter, we also denote by $\eta$ the bounded extension
$D_{r}\to\cE$ of $\eta$.
Indeed, for each $d\in D^{\diamondsuit}$, we have
\[
\|\eta(d)\|^{2}
=\|E_{0}(d^{*}d)\|
=\|E(\pi_{r}(d)^{*}\pi_{r}(d))\|
\le \|\pi_{r}(d)\|^{2}.
\]
Hence, $\eta\circ\pi_{r}^{-1}\colon \pi_{r}(D^{\diamondsuit})\to \cE$
extends to a bounded map on
$D_{r}=\overline{\pi_{r}(D^{\diamondsuit})}$,
which is again denoted by $\eta$.

The proof strategy for the following lemma is similar to that for \cite[Proposition 4.7]{doi:10.1142/S1793525325500256}.
\begin{lem}\label{lem:faithful}
    Under the above assumptions,
    the conditional expectation $E$ is faithful.
\end{lem}
\begin{proof}
    Take $d\in D_{r}$ with $de_{0}=0$.
    It suffices to show $d=0$.
    Let $P_{t}\in\End(\cE_{B})$ be the projection onto $\eta(\cL_{t}^{*}B)$ for each $t\in\cT$.
    Fixing $t_{1},t_{2}\in\cT$,
    we now show $P_{t_{1}}dP_{t_{2}}=0$.
    Take a sequence $(d_n)_{n}$ in $\pi_{r}(D^{\diamondsuit})$ with $\lim_{n\to\infty}d_{n}=d$.
     By the definition of $\cE$,
     we have \[
     \|\eta(d_{n})\|^{2}=\|\langle\eta(d_{n})\mid\eta(d_{n})\rangle\|=\|E(d_{n}^{*}d_{n})\|=\|E(d_{n}^{*}d_{n})e_{0}\|=\|e_{0}d_{n}^{*}d_{n}e_{0}\|.\]
    Since $de_{0}=0$,
    this implies $\lim_{n\to\infty}\|\eta(d_{n})\|^{2}=\lim_{n\to\infty}\|e_{0}d_{n}^{*}d_{n}e_{0}\|=0$.
    For each $s\in\cT$
    and $n$,
    set $d_{n}(s):=\eta^{-1}(P_{s}\eta(d_{n}))\in\pi_{r}(\cL_{s}^{*}B)$.
    Note that each $d_{n}$ can be written as the finite sum $\sum_{s\in\cT}d_{n}(s)$, and $\lim_{n\to\infty}\eta(d_{n}(s))=\lim_{n\to\infty}P_{s}\eta(d_{n})=0$ holds for every $s$.
    Since $\eta(\cL_{s}^{*}B)$ is closed in $\cE$,
    the restriction $\eta|_{\pi_{r}(\cL_{s}^{*}B)}\colon\pi_{r}(\cL_{s}^{*}B)\to \eta(\cL_{s}^{*}B)$
    of $\eta$ is a Banach space isomorphism. Hence its inverse is bounded, and $\lim_{n\to\infty}\eta(d_{n}(s))=0$ implies
 $\lim_{n\to\infty}d_{n}(s)=0$ for all $s$.
    Set \[F:=\{s\in \cT\mid(\rho_{t_{1}},\;\rho_{t_{2}}\circ\rho_{s})\neq0\}.\]
    Then we have $P_{t_{1}}d_{n}P_{t_{2}}=\sum_{s\in F}P_{t_{1}}d_{n}(s)P_{t_{2}}$.
    Indeed,
    as discussed in the proof of Lemma \ref{lem_alg}, 
    we get the following for every $s\in\cT$:
    \[\pi_{r}(\cL_{s}^{*}B)\eta(\cL_{t_{2}}^{*}B)\subset\eta(\cL_{s}^{*}\cL_{t_{2}}^{*}B)={\rm span}\bigg(\bigcup_{r\in\cT,\,(\rho_{r},\,\rho_{t_{2}}\circ\rho_{s})\neq0} \eta(\cL_{r}^{*}B)\bigg).\]
   Hence,
    $P_{t_{1}}d_{n}(s)P_{t_{2}}=0$ for all $s\notin F$.
    This implies
    $P_{t_{1}}d_{n}P_{t_{2}}=\sum_{s\in F}P_{t_{1}}d_{n}(s)P_{t_{2}}$.
    Since $F$ is a finite set,
    we get \[P_{t_{1}}dP_{t_{2}}=\lim_{n\to\infty}P_{t_{1}}d_{n}P_{t_{2}}=\sum_{s\in F}\lim_{n\to\infty}P_{t_{1}}d_{n}(s)P_{t_{2}}=0.\]
   This implies $d=0$, and we see that $E$ is faithful.
\end{proof}
Since $D^{\diamondsuit}$ is generated by the \Cs-algebra $B$ and isometries,
we can define the universal enveloping \Cs-algebra 
$$
D_{u}:=C^*_{u}(D^{\diamondsuit}).
$$
By construction,
the inclusion $B\subset D_{u}$ is nondegenerate.
The canonical quotient map from $D_{u}$ onto $D_{r}$ is also denoted
\[
q_r:D_u\to D_r.
\]
We shall now state and prove a technical lemma that will be useful in the following discussion.  

\begin{lem}\label{lem:max}
For any nonzero \Cs-algebra $C$ and any nonzero surjective $*$-homomorphism $\theta\colon D_{u}\to C$, there is a $*$-homomorphism $\overline{\theta}\colon  C\to D_{r}$ such that $q_{r}=\overline{\theta}\circ\theta$ holds.
\end{lem}
\begin{proof}
    It suffices to show ${\rm Ker}(\theta)\subset {\rm Ker}(q_{r})$.
    For the sake of contradiction,
    we suppose that there is a positive element $x\in {\rm Ker}(\theta)\setminus {\rm Ker}(q_{r})$.
    Since $E$ is faithful,
    we may assume $\|E\circ q_{r}(x)\|=1$.
    In the same way as in the proof of Lemma \ref{lem_alg} (2),
    for any $\epsilon>0$,
    we can take a positive norm-one element $b\in B$ satisfying
    \[bxb\approx_{\epsilon}bE\circ q_{r}(x)b\;\text{ and }\;\|bE\circ q_{r}(x)b\|>1-\epsilon.\]
    Fixing a positive element $b_{0}\in B$ of norm one, by pure infiniteness of $B$ together with simplicity, 
    we have $cbE\circ q_{r}(x)bc^{*}\approx_{\epsilon}b_{0}$ for some $c$ with $\|c\|=1$.
    Hence,
    we get
    \[cbxbc^{*}\approx_{2\epsilon} b_{0}.\]
    Since the left-hand side is contained in ${\rm Ker}(\theta)$,
    this implies $B\subset {\rm Ker}(\theta)$.
    Therefore ${\rm Ker}(\theta)=D_{u},$ contradicting $\theta\neq0$.
    This establishes the lemma. 
\end{proof}
%smallskip

\begin{proof}[Proof for Theorem \ref{thmalpha:end}]
Notice that $A\ncong\IK$.
Indeed, every irreducible endomorphism of $\IK$ is unitarily equivalent to $\id_{\IK}$, since it induces an irreducible representation of $\IK$, and every irreducible representation of $\IK$ is unitarily equivalent to the one associated with $\id_{\IK}$.
Hence, $\IK$ has no nontrivial irreducible sectors, whereas $A$ has an irreducible nonsurjective sector $[\rho]$.

    Let $q\colon B(\cH)\to B(\cH)/\IK(\cH)=:Q(\cH)$ be the canonical quotient map.
    Considering $C:=(\id_{\cO_{\infty}}\otimes q)(D)$,
    Inequality (\ref{eq:contradict}) implies the following inequality in $C$:
     \begin{align}\label{eq:quatient}
        \delta(1_{\cO_{\infty}}\otimes q\circ\pi (hh^{*}))&\leq (1_{\cO_{\infty}}\otimes q\circ\pi (h))\Big[(1_{\cO_{\infty}}\otimes q\circ\pi (a))(1_{\cO_{\infty}}\otimes q(V))\\
        &+(1_{\cO_{\infty}}\otimes q(V))^{*}(1_{\cO_{\infty}}\otimes q\circ\pi (a))^*\Big](1_{\cO_{\infty}}\otimes q\circ\pi (h^{*}))\nonumber
    \end{align}
for every $h\in H$.
    By the universality of $D_{u}$,
    the restriction $(\id_{\cO_{\infty}}\otimes q)|_{D^{\diamondsuit}}\colon D^{\diamondsuit}\to  C$ extends to a nonzero surjective $*$-homomorphism $\theta\colon D_{u}\to C$. 
    To see that $\theta$ is nonzero, it suffices to show that $q(\pi(A))\neq 0$, equivalently that $\pi(A)\not\subset \IK(\cH)$.
If $\pi(A)\subset \IK(\cH)$, then simplicity of $A$ implies $A\cong\pi(A)\cong \IK$
\footnote{Take a positive nonzero $a\in A\subset \IK$. Since $a$ has a finite spectrum away from $0$,
    there is a nonzero projection $p$ in $A$. As $p$ is a finite-rank projection in $\IK$, 
    $pAp$ is finite dimensional.
    Thus, we can take $q\in pAp$ with $qAq=\IC q$. Since $A$ is stable and simple,
    $A$ is isomorphic to the stabilization of $qAq$. 
    Thus, $A\cong\IK$.
    },
    contradicting the previous observation.
    Hence, $\theta$ is nonzero.
    Therefore, by the simplicity of $B$, the restriction $\theta|_{B}$ is an isomorphism between $B$ and $\cO_{\infty}\otimes q(\pi(A))$.
    
    By Lemma \ref{lem:max},
    there is a $*$-homomorphism $\overline{\theta}\colon C\to D_{r}$ satisfying $q_{r}=\overline{\theta}\circ \theta$,
    and we get the conditional expectation 
    \[E\circ\overline{\theta}\colon C\overset{\overline{\theta}}{\to} D_{r}\overset{E}{\to} B.\]
    Here, under the isomorphism $\theta|_{B}$, we identify $B$ with $\cO_{\infty}\otimes q\circ\pi (A)$ that is a subalgebra of $C$.
    Since the isometry $1_{\cO_{\infty}}\otimes V$ is contained in $\cL_{t_0}=(\iota\circ\rho_{t_{0}},\iota)$,
    we get 
    $$
    E\circ\overline{\theta}(1_{\cO_{\infty}}\otimes q(V))=E\circ q_r(1_{\cO_\infty}\otimes V) = E(1_{\cO_\infty}\otimes V)=0.
    $$
    Hence, Inequality (\ref{eq:quatient}) implies  
    \[0\geq E\circ\overline{\theta}[\delta(1_{\cO_{\infty}}\otimes q\circ\pi(hh^{*}))]=\delta(1_{\cO_{\infty}}\otimes q\circ\pi(hh^{*}))\]
    for every $h\in H$.
    Since $A$ is simple and $q\circ\pi$ is injective,
    the above inequality contradicts the assumption $H\neq0$.
    Thus $\rho$ is properly outer.
\end{proof}

\section{Applications}
\subsection{Freeness of UTC actions}
Freeness of actions of unitary tensor categories was introduced in \cite[Definition 4.1]{2024arXiv240918161H} to discuss the simplicity of the generalized crossed product \Cs-algebras associated with UTC-actions.
In this subsection, we show that outer actions of unitary tensor categories on unital simple \Cs-algebras are automatically free (see Corollary \ref{cor:csirr}).

First, we introduce proper outerness for bimodules. (In \cite[Definition 2.1]{MR4485960} this property is referred to as \emph{aperiodicity}.)
\begin{defn}\label{defn:bimod_prop_outer}
Let $X$ be a (right) \Cs-correspondence over a \Cs-algebra $A$.
We say that $X$ is {\bf properly outer} if for every $\xi\in X$,
every hereditary subalgebra $H\subset A$ and every $\epsilon>0$,
there exists a positive element $h\in H$ with $\|h\|=1$ such that $\|h\tr\xi\tl h\|<\epsilon$ holds. 
\end{defn}

The following lemma is repeatedly used throughout this section.
\begin{lem}\label{lem:functional_calculus}
    Let $A$ be a \Cs-algebra and $a\in A$ be a self-adjoint element with $\sup{\rm Spec}(a)=1$.
    Then for any $\epsilon>0$,
    there exists a self-adjoint element $a^{\prime}\approx_{\epsilon}a$ such that the following hereditary subalgebra $H_{a^{\prime}}\subset A$ is nonzero:
    \begin{equation}\label{eq:hereditary}
        H_{a^{\prime}}:=\{h\in A\mid ha^{\prime}=a^{\prime}h=h\}.
        \end{equation}
        Moreover, if $a$ is positive and has norm one, then $a^{\prime}$ can be chosen to be positive and of norm one as well.
\end{lem}
\begin{proof}
Before proving the claim,
note that $H_{a'}\subset A$ is a \Cs-subalgebra which moreover is hereditary since $h_1xh_2\in H_{a'}$ for each $x\in A$ and every $h_1, h_2\in H_{a'}.$
    Define the continuous function $f\colon (-\infty,1]\to(-\infty,1]$ by
    \[
    f(t):=\begin{cases}
        1& \text{ if $t>1-\epsilon$} \\
        (1-\epsilon)^{-1}t& \text{ if $0<t\leq1-\epsilon$}\\
        t& \text{ if $t\leq0$}
    \end{cases}
    \]
    and put $a^{\prime}:=f(a)$.
    Since $\sup_{t\leq 1}|t-f(t)|\leq\epsilon$, we have $\|a-a^{\prime}\|\leq\epsilon$.
    As $\sup{\rm Spec}(a)=1$, we can take a positive continuous function $g$ supported on $[1-\epsilon, 1]$ such that $\sup_{t\in{\rm Spec}(a)}|g(t)|=1$.
    This implies that $0\neq g(a)\in H_{a^{\prime}}$. 
    By construction, if $a$ is positive and norm one, then so is $a^{\prime}$.
\end{proof}

The following statement is an analogue of Lemma \ref{lem:prop_outer}, and its proof is similar to \cite[Lemma 3.2]{MR634163}. 
However, since the techniques in its proof are also useful throughout this section, we decided to include a brief proof for the reader's convenience.
\begin{lem}\label{lem:prop_outer_bimod}
    Let $A$ be a \Cs-algebra and $X_{1},X_{2},\dots,X_{n}$ be properly outer \Cs-correspondences over $A$.
    Then the direct sum $\bigoplus_{i=1}^{n}X_{i}$ is properly outer.
    More precisely, we get the following:
    \begin{enumerate}
    \item For any nonzero hereditary algebra $H\subset A$, $\xi_{i}\in X_{i}$ ($i=1,\dots,n$), 
    and $\epsilon>0$,
    there exists a positive element $c\in H$ with $\|c\|=1$ such that 
    \[\|c\rhd\xi_{i}\lhd c\|<\epsilon,\quad i=1,\dots,n.\]
        \item For any self-adjoint element $a\in A$, $\xi_{i}\in X_{i}$ ($i=1,\dots,n$), 
    and $\epsilon>0$,
    there exists a positive element $c\in A$ with $\|c\|=1$ such that 
    \[\|cac\|>\|a\|-\epsilon,\;\text{ and }\|c\rhd\xi_{i}\lhd c\|<\epsilon, \quad i=1,\dots,n.\]
    \end{enumerate}
\end{lem}
\begin{proof}
    We show (1).
    When $n=1$, the claim follows from the definition. Suppose that the claim holds for $n-1$.
    For $i=1,\dots,n$ and $\xi_{i}\in X_{i}$,
    by the assumption of induction,
    we can take a positive element $d\in H$ with $\|d\|=1$ satisfying $\|d\rhd\xi_{i}\lhd d\|<\epsilon$ for all $1\leq i\leq n-1$.
    Note that the inequalities $\|d\rhd\xi_{i}\lhd d\|<\epsilon$ are preserved by replacing $d$ with an element $d^{\prime}\in H_{+}$ that is close enough to $d$ in the norm.
    By Lemma \ref{lem:functional_calculus}, after replacing $d$ with a nearby element of $H_+$, 
    we may assume $H_{d}:=\{h\in H\mid hd=dh=h\}$ is nonzero.
    Note that $H_{d}$ is a hereditary subalgebra of both $H$ and $A$.
    Since $X_{n}$ is properly outer,
    we can take a positive norm-one element $c\in H_{d}\subset H$ satisfying $\|c\rhd\xi_{n}\lhd c\|<\epsilon$.
    For each $1\leq i\leq n-1$,
    we have \[\|c\rhd\xi_{i}\lhd c\|=\|cd\rhd\xi_{i}\lhd dc\|=\|c\rhd(d\rhd\xi_{i}\lhd d)\lhd c\|<\epsilon.\] 
    Hence,
    we get (1) by induction.  Proper outerness of $\bigoplus_{i=1}^{n}X_{i}$ follows immediately from (1).
    
    For (2),
    replacing $a$ with $\|a\|^{-1}a$ or $-\|a\|^{-1}a$,
    we may assume 
    $\|a\|=\sup \Spec(a)=1$.
    Using Lemma \ref{lem:functional_calculus},
    we can take $a^{\prime}\approx_{\epsilon}a$ with $H_{a^{\prime}}\neq 0$.
    Applying (1) with $H=H_{a^{\prime}}$,
    we get a positive norm-one element $c\in H_{a^{\prime}}$ satisfying $\|c\rhd\xi_{i}\lhd c\|<\epsilon$ for all $1\leq i\leq n$.
    Since we have
    \[\|cac\|>\|ca^{\prime}c\|-\epsilon=\|c^{2}\|-\epsilon=1-\epsilon,\]
    this completes the proof.
\end{proof}

\begin{rmk}\label{rmk:outer}
    If $X$ is properly outer,
    then $\rCorr({}_{A}A_{A}\to {}_{A}X_{A})=0$ holds.
    Indeed,
    if there exists $T\in\rCorr({}_{A}A_{A}\to {}_{A}X_{A})$ with $\|T\|=1$,
    then for any $\epsilon>0$,
    we can take a hereditary subalgebra $H\subset A$ such that $T^{*}Th\approx_{\epsilon}h$ for all $h\in H$ with $\|h\|=1$.
Indeed,
 by Lemma \ref{lem:functional_calculus} applied to
$a=T^{*}T$,
there exists a positive element $S\in M(A)$ of norm one such that $S\approx_{\epsilon}T^{*}T$ and the hereditary subalgebra $H_S\subset M(A)$ defined in Equation (\ref{eq:hereditary}) is nonzero.
It follows that $H:=\overline{H_{S}AH_{S}}$ is a nonzero hereditary subalgebra of $A$ satisfying the required property.\\
Using Lemma \ref{lem:functional_calculus} repeatedly,
we choose a positive norm-one element $h_0\in H$ such that the hereditary subalgebra
$
H_1:=H_{h_{0}}:=\{h\in H\mid hh_0=h=h_0h\}\subset H
$
is nonzero.
    We then have
    \[\|h_1\rhd Th_0\lhd h_1\|^{2}=\|T(h_1h_0h_1)\|^{2}=\|T(h_1^{2})\|^{2}=\|h_{1}^{2}(T^{*}T)h_{1}^{2}\|\approx_{\epsilon}\|h_1^{4}\|\]
    for every positive norm-one element $h_1\in H_{1}$.
    Note that $H_{1}$ is also hereditary in $A$.
    This implies that $X$ is not properly outer.
\end{rmk}

 When $A$ is separable and simple,
    we get the converse as follows.
\begin{cor}\label{cor:properly_outer}
    Let $A$ be a separable simple \Cs-algebra,
    then every finite-index bimodule $X$ over $A$ with $\rCorr({}_{A}A_{A}\to {}_{A}X_{A})=0$ is properly outer.
\end{cor}
\begin{proof}
   First,
    we assume $X$ is irreducible.
    Let $X_{s}:=\IK\otimes X$ be an exterior tensor product,
    then it is routine to show that $X_{s}$ is a finite-index \Cs-bimodule over $A_{s}:=\IK\otimes A$.
    Thanks to Lemma \ref{lem:stable_bimodule},
    we get a finite-index endomorphism $\rho\colon A_{s}\to A_{s}$ such that ${}_{A_{s}}{X_{s}}_{A{s}}$ is isomorphic to ${}_{A_{s}}({}_{\rho}A_{s})_{A{s}}$.
   Since ${}_{A_{s}}{X_{s}}_{A{s}}$ is irreducible,
   $\rho$ is irreducible.
The assumption $\rCorr({}_{A_{s}}A_{sA_{s}}\to {}_{A_{s}}X_{sA_{s}})=0$ implies $(\rho,\id_{A_{s}})=0$.
Combining Theorem \ref{thmalpha:end} with \cite[Lemma 1.1]{MR634163},
we see that $\rho$ is properly outer regardless of whether it is surjective.
Hence,
   we get $X_{s}\cong {}_{\rho}A_{s}$ is properly outer.

    To show the proper outerness of $X$,
    fix a hereditary subalgebra $H\subset A$,
    $\epsilon>0$, 
    a norm one vector $\xi\in X$
    and a rank one projection $e_{1,1}\in\IK$.
    Since $e_{1,1}\otimes H$ is a hereditary subalgebra of $A_{s}$ and $e_{1,1}\otimes\xi\in X_{s}$,
    there is a norm-one element $c\in H$ with $\|c^{*}\rhd\xi\lhd c\|_{X}=\|e_{1,1}\otimes(c^{*}\rhd\xi\lhd c)\|_{X_{s}}<\epsilon$.
    This shows the claim for irreducible $X$.

    Whenever $X$ is decomposed into a direct sum $\bigoplus_{i=1}^{n}X_{i}$ of irreducible finite-index bimodules (see Lemma \ref{lem:semisimple_bimodule}), since every direct summand $X_{i}$ satisfies $\rCorr({}_{A}A_{A}\to {}_{A}X_{iA})=0$,
    it is properly outer.
    Thanks to Lemma \ref{lem:prop_outer_bimod}, 
    we get the statement.
\end{proof}

We now explore the interplay between proper outerness and the notion of \emph{freeness} of UTC-actions in the sense of \cite{2024arXiv240918161H}, which we recall below. 
In this subsection, we write objects of $\cC$ as $\ca,\cb$.
\begin{defn}[Definition 4.1 of \cite{2024arXiv240918161H}]\label{defn:free_action}
    Let $\cC$ be a unitary tensor category, and let $A$ be a unital \Cs-algebra.
    We say that an outer action (i.e. a fully-faithful unitary tensor functor) $F\colon \cC\to \fiBim(A)$ {\bf is free }if for each $\ca\in \cC$ with $\cC(1_{\cC}\to \ca)=0$ and every $\xi\in F(\ca)$ we have
    \[
    \inf
    \left\{\Big\|\sum_{i=1}^{n}s_{i}\rhd\xi\lhd s_{i}^{*}\Big\|\, \  
    \middle| 
    % \bigg|
    \ s_{1},\dots,s_{n}\in A,\;\sum_{i=1}^{n}s_{i}s_{i}^{*} =1\right\}=0.
    \]
\end{defn}
\begin{rmk}
    Every free UTC action $F\colon \cC\to\fiBim(A)$ on a unital \Cs-algebra $A$ with trivial center is automatically outer.
    Indeed,
    if $\cb\in\cC$ is an irreducible object,
    then $\cb\otimes\overline{\cb}$ is decomposed as a direct sum $1_{\cC}\oplus \ca$ such that $\cC(1_{\cC}\to \ca)=0$.
    Hence,
    we have
    \[F(\cb)\boxtimes_{A}\overline{F(\cb)}\cong F(\cb\otimes\overline{\cb})\cong A\oplus F(\ca).\]
    Since $F$ is free,
    $\rCorr({}_{A}A_{A}\to{}_{A}F(\alpha)_{A})=0$ holds.
    Indeed, if there is an isometry $T\in\rCorr({}_{A}A_{A}\to {}_{A}F(\ca)_{A})$,
    then for any $s_{1},\dots,s_{n}\in A$ with $\sum_{i=1}^{n}s_{i}s_{i}^{*}=1_{A}$ we have $\sum_{i=1}^{n}s_{i}\rhd T(1_{A})\lhd s_{i}^{*}=T(1_{A})$.
    This contradicts the freeness of $F$.
    Consequently,
    since $F(\cb)\boxtimes_{A}\overline{F(\cb)}$ contains the trivial bimodule as an orthogonal summand with multiplicity 1,
    we see that $F(\cb)$ is an irreducible bimodule for every irreducible object $\cb\in\cC$.
    This implies that $F$ is outer.
\end{rmk}
When a \Cs-algebra is simple,
the converse holds as follows.

\begin{cor}\label{cor:free}
    When a \Cs-algebra $A$ is unital, 
    separable and simple,
    every outer action $F\colon \cC\to \fiBim(A)$ of a unitary tensor category $\cC$ is free.
\end{cor}
\begin{proof}
    For every $\ca\in\cC$ with $\cC(1_{\cC}\to \ca)=0$,
    the finitely generated projective bimodule $F(\ca)$ satisfies $\rCorr({}_{A}A_{A}\to{}_{A}F(\ca)_{A})=0$.
    Fix $\xi\in F(\ca)$ and $\epsilon>0$.
   By Lemma \ref{lem:prop_outer_bimod},
    we can take a positive norm-one element $c_{0}\in A$ such that $\|c_{0}\triangleright\xi\triangleleft c_{0}\|<\epsilon$.
  Applying Lemma \ref{lem:functional_calculus} to $a=c_{0}$ and replacing $c_{0}$ if necessary,
   we may assume there exists a positive norm-one element $c\in A$ with $cc_{0}=c=c_{0}c$.
   Since $A$ is unital and simple,
   we get $t_{1},\dots,t_{n}\in A$ with $\sum_{j=1}^{n}t_{j}c^{2}t_{j}^{*}=1_{A}$.
   Let $\ell^{2}([n])$ be the $n$-dimensional Hilbert space with orthonormal basis $\{\delta_{i}\}_{i=1}^{n}$. 
   Define 
   \begin{align*}
   s_{j}&:=t_{j}c,\qquad\qquad\qquad \zeta:=\sum_{j=1}^{n}(c_{0}\rhd\xi\lhd c_{0}s_{j}^{*})\otimes\delta_{j}\in F(\ca)\otimes\ell^{2}([n]),\\
   x&:=\sum_{i,j=1}^{n}s_{i}^{*}s_{j}\otimes e_{i,j},\quad s:=\sum_{j=1}^{n}s_{j}\otimes e_{1,j}\in A\otimes M_{n}\subset \End\left((F(\ca)\otimes\ell^{2}([n]))_{A}\right).
   \end{align*}
   Note that 
   \[
   \|x\|=\|s^{*}s\|=\|ss^{*}\| = \bigg\|\sum_{j=1}^{n}s_{j}s_{j}^{*}\otimes e_{1,1}\bigg\| = \bigg\|\sum_{j=1}^{n}s_{j}s_{j}^{*}\bigg\| = \bigg\|\sum_{j=1}^{n}t_{j}c^{2}t_{j}^{*}\bigg\| = 1.
   \]
   Since $s_{j}c_{0}=s_{j}$, we have
   \begin{align*}
\bigg\|\sum_{j=1}^{n}s_{j}\triangleright\xi\triangleleft s_{j}^{*}\bigg\|^{2}
=&\bigg\|\sum_{i,j=1}^{n}s_{i}\langle c_{0}\triangleright\xi\triangleleft c_{0}\mid(s_{i}^{*}s_{j}c_{0})\triangleright\xi\triangleleft c_{0}\rangle s_{j}^{*}\bigg\|\\
=&\|\langle\zeta\mid x\rhd\zeta\rangle\|\\
\leq&\|x\|\cdot\|\langle\zeta\mid\zeta\rangle\|\\
=& 
\bigg\|\sum_{j=1}^{n}s_{j}\langle c_{0}\triangleright\xi\triangleleft c_{0}\mid c_{0}\triangleright\xi\triangleleft c_{0}\rangle s_{j}^{*}\bigg\|\\
\leq&\bigg\|\sum_{j=1}^{n}s_{j}s_{j}^{*}\bigg\|\cdot \|\langle c_{0}\triangleright\xi\triangleleft c_{0}\mid c_{0}\triangleright\xi\triangleleft c_{0}\rangle\|\\
<&\epsilon^{2}.
   \end{align*}
   Thus,
   we get $\|\sum_{j}^{n}s_{j}\triangleright\xi\triangleleft s_{j}^{*}\|<\epsilon$.
\end{proof}
The above statement implies that the freeness assumption in \cite[Theorem 4.2]{2024arXiv240918161H} automatically holds if an action is outer. 

\begin{rmk}
    In the proof of Corollary \ref{cor:free} we used that for a unital simple \Cs-algebra $A$, and any positive $c\in A$, there are $\{t_j\}_{j=1}^n\subset A$ such that $\sum_{j=1}^n t_jct_j^* =1$. 
    We recall the proof of this elementary fact here for the reader's convenience. 
    Since $A$ is unital and simple, we have $\sum_{j=1}^n t_j\sqrt{c}t_j^{\prime}=1$ for some $\{t_j, t_j'\}_{j=1}^n\subset A$.
    Let us assume first $n=1$. Then $t\sqrt{c}t^{\prime}=1$.
   We have
   \[\|t^{\prime}t^{\prime*}\|tct^{*}\geq t\sqrt{c}t^{\prime}t^{\prime*}\sqrt{c}t^{*}=1.\]
   This implies $tct^{*}$ is positive invertible. Hence, we have $1=(tct^{*})^{-\frac{1}{2}}tct^{*}(tct^{*})^{-\frac{1}{2}}$. Replacing $t$ with $(tct^{*})^{-\frac{1}{2}}t$, we have $tct^{*}=1$.
   The general fact follows at once from a standard matrix trick. 
\end{rmk}

\subsection{\texorpdfstring{\Cs-discrete inclusions of simple \Cs-algebras}{}}\label{sec:disc}
Proper outerness of automorphisms is useful for the study of crossed product \Cs-algebras.
In this subsection,
using proper outerness of bimodules,
we show several properties of \Cs-discrete inclusions.
We focus on \Cs-irreducibility and some properties of inclusions arising from quasi-product actions \cite{MR1234394} of compact groups.

    We use the following definition for a nondegenerate inclusion that is not necessarily unital.
\begin{defn}[Definition 3.1 of \cite{MR4599249}]\label{defn:Csirrd}
    A nondegenerate inclusion $B\subset A$ of \Cs-algebras is said to be {\bf \Cs-irreducible} if every intermediate \Cs-algebra is simple.
\end{defn}
In \cite[Theorem 5.8]{MR4599249},
it is proven that if a discrete group action $\Gamma\acts B$ on a unital simple \Cs-algebra is outer,
then the inclusion $B\subset B\rtimes_{r}\Gamma$ is \Cs-irreducible.
Here, we note that the proof of \cite[Theorem 5.8 (ii)$\Rightarrow$ (i)]{MR4599249} also works for actions on nonunital \Cs-algebras. Since we will prove \Cs-irreducibility in a more general setting than
$B\subset B\rtimes_r\Gamma$ in Corollary \ref{cor:csirr},
we omit the details.

    As a dual notion to a properly outer action of a discrete group,
    we may consider a quasi-product action of a compact group introduced in \cite{MR1234394}.
    \begin{thm}[Theorem 1 of \cite{MR1234394}]\label{thm:qpa}
        Let $\alpha\colon G\acts A$ be a faithful compact group action on a separable \Cs-algebra,
        then the following are equivalent:
        \begin{enumerate}
            \item The crossed product \Cs-algebra $A\rtimes G$ is prime,
            and all nontrivial dual endomorphisms $\{\hat{\alpha}_{\pi}\}_{\pi\in\hat{G}\setminus\{1\}}$ of $(A\rtimes G)\otimes\IK$, (see \cite[Section 1]{MR1234394} for the definition), are properly outer.
            \item There is a $\alpha$-invariant pure state $\omega$ of $A$ such that the restriction $\omega|_{A^{G}}$ defines a faithful GNS-representation $\pi_{\omega|_{A^{G}}}$ of $A^{G}$.
            \item There is a faithful irreducible representation $\pi\colon A\to B(\cH)$ such that the restriction $\pi|_{A^{G}}$ is also irreducible.
        \end{enumerate}
    \end{thm}
    If an action $\alpha$ satisfies the above equivalent conditions,
    $\alpha$ is called a {\bf quasi-product action}.
    The last item of the above theorem describes a property concerning the irreducibility of the inclusion $A^{G}\subset A$. This is referred to as the {\bf property (BEK)} as follows.
    \begin{defn}[Definition 2.4 of \cite{doi:10.1142/S0129055X24610026} (see also Theorem 3.1 of \cite{MR1234394})]
        We say that an inclusion $B\subset A$ of separable \Cs-algebras {\bf has the (BEK) property},
        if there is a faithful irreducible representation $\pi\colon A\to B(\cH)$ such that the restriction $\pi|_{B}$ is also irreducible.
    \end{defn}
Recently,
Izumi proved that if a compact group action $\alpha\colon G\acts A$ on a separable \Cs-algebra is minimal and the fixed point algebra $A^{G}$ is simple,
then $\alpha$ is a quasi-product action \cite[Theorem 1.1]{doi:10.1142/S0129055X24610026}.
In this subsection,
we study irreducible \Cs-discrete inclusions of simple \Cs-algebras by comparing them with inclusions arising from quasi-product actions of compact groups and properly outer actions of discrete groups.
First, we prepare the following lemma.
For the definition of the diamond space, refer to Equation (\ref{eq:diamond}); for the definition of the projective quasi-normalizer, refer to Equation (\ref{eqn:PQN}).

\begin{lem}\label{lem:PQN}
    Let $B\overset{E}{\subset}A$ be a nondegenerate irreducible inclusion with a faithful conditional expectation $E$.
    As in Example \ref{exa:basic_construction},
    we suppose $\cE$ is a \Cs-correspondence associated with $E$ and $\eta\colon A\to \cE$ is the canonical inclusion map, %with a dense image,
    then the following hold.
    \begin{enumerate}
    \item The diamond space $\rCorr({}_{B}B_{B}\to{}_{B}\cE_{B})^{\diamondsuit}$ is equal to $\IC\eta \circ\iota$,
    where $\iota\colon B\to A$ is the given inclusion map.
    \item For every $a\in \PQN(B\subset A)$,
    there exist $X\in\fiBim(B)$ and $f\in\rCorr({}_{B}X_{B}\to{}_{B}\cE_{B})^{\diamondsuit}$ such that $\rCorr({}_{B}B_{B}\to{}_{B}X_{B})=0$ and $\eta(a-E(a))\in f(X)$.
    \item If $B$ is separable and simple,
    then for any $a_{1}, a_{2},\dots,a_{n}\in \PQN(B\subset A)$, any $\epsilon>0$, and each nonzero hereditary subalgebra $H\subset B$,
    there exists an element $b\in H_{+}$ such that
    \[\|b\|=1,\qquad \|b(a_{i}-E(a_{i}))b\|<\epsilon.\]
    \end{enumerate}
\end{lem}
\begin{proof}
    For (1),
    take $f\in\rCorr(B\to \cE)^{\diamondsuit}$.
    We put $\check{f}:=\eta^{-1}\circ f\colon B\to A$, and we shall show it is a continuous $B$-$B$ bilinear map.
    By Lemma \ref{lem:orthogonal_summand}, 
    we have $f(B)$ is closed in $\cE$,
    and $\eta|\colon \check{f}(B)\to f(B)$ is a continuous open linear bijection by the open mapping theorem.
    Here,
    $\check{f}(B)\subset A$ is closed in the norm since $\eta$ is norm continuous and $\check{f}(B)=\eta^{-1}(f(B))$.
    Hence, $\check{f}\colon B\to A$ is a bounded $B$-$B$ bilinear map.
    Let $(b_{n})_{n}$ be approximate units of $B$.
    Since $B\subset A$ is nondegenerate,
    straightforward computations show the sequence $(\check{f}(b_{n}))_{n}$ converges in the strict topology and the limit $\lim_{n}\check{f}(b_{n})$ is in $M(A)\cap B^{\prime}\cong\IC$.
    Let $\lim_{n}\check{f}(b_{n})=\lambda1_{A}$,
    then we have \[f(b)=\lim_{n\to\infty}f(b_{n}b)=\lim_{n\to\infty}\eta(\check{f}(b_{n})b)=\lambda\eta(b)\quad\text{for all $b\in B$.}\]

    For (2), 
    take $a\in \PQN(B\subset A)$ arbitrarily.
    By the definition of the projective quasi-normalizer,
    there exist
    $Y\in\fiBim(B)$ and $g\in\rCorr({}_{B}Y_{B}\to{}_{B}\cE_{B})^{\diamondsuit}$ with $a\in\check{g}(Y)$.
    Let $e$ be the Jones projection,
    then $h:=(1-e)g$ is also an element of $\rCorr({}_{B}Y_{B}\to{}_{B}\cE_{B})^{\diamondsuit}$.
    Put $X:=h^{*}h(Y)$,
    and the restriction $h|_{X}\colon X\to \cE$ of $h$ is denoted by $f$.
    We will show that these $f$ and $X$ satisfy the required property.
    As in the proof of Lemma \ref{lem:orthogonal_summand},
    we get $X=h^{*}h(Y)=\ker(h)^{\perp}$ and 
    \[f(X)=hh^{*}h(Y)=h(Y)=(1-e)(g(Y))\subset \eta(A).\]
    Hence,
    we see that $f\in\rCorr({}_{B}X_{B}\to {}_{B}\cE_{B})^{\diamondsuit}$ and $\eta(a-E(a))\in f(X)$.
It is enough to show $\rCorr({}_{B}B_{B}\to{}_{B}X_{B})=0$.
Since $f^{*}f\in\End({}_{B}X_{B})$ is invertible,
    for each $k\in\rCorr({}_{B}B_{B}\to {}_{B}X_{B})$,
   we have $k=(f^{*}f)^{-1}f^{*}fk$.
   On the other hand,
   using (1),
   we see that $fk\in\rCorr({}_{B}B_{B}\to{}_{B}\cE_{B})^{\diamondsuit}=\IC\eta\circ\iota$ and $efk=0$.
   This implies $fk=0$.
   Thus, we get $\rCorr({}_{B}B_{B}\to {}_{B}X_{B})=0$.
\smallskip

   We prove (3).
   By (2), for each $i$,
we can take $X_{i}\in\fiBim(B)$ with $\rCorr({}_{B}B_{B}\to{}_{B}X_{iB})=0$
and $f_{i}\in\rCorr({}_{B}X_{iB}\to{}_{B}\cE_{B})^{\diamondsuit}$ satisfying $a_{i}-E(a_{i})\in\check{f}_{i}(X_{i})$.
Take $\xi_{i}\in X_{i}$ such that $\check{f}_{i}(\xi_{i})=a_{i}-E(a_{i})$. 
Applying Corollary \ref{cor:properly_outer} and Lemma \ref{lem:prop_outer_bimod} (1), 
for any $\epsilon>0$ and any nonzero hereditary \Cs-subalgebra $H\subset B$, 
there exists $b\in H_{+}$ such that
    \[\|b\|=1,\quad 
    \|b\rhd\xi_{i}\lhd b\|<\frac{\epsilon}{\max_{1\leq i\leq n}\|\check{f}_{i}\|+1}.\]
Here, in the same way as in the proof of (1),
we see that $\check{f}_{i}\colon X_{i}\to A$ is bounded.
Therefore,
we have 
\[\|b(a_{i}-E(a_{i}))b\|=\|b\check{f}_{i}(\xi_{i})b\|=\|\check{f}_{i}(b\rhd\xi_{i}\lhd b)\|<\epsilon.\]
\end{proof}

Using Lemma \ref{lem:PQN} and Theorem \ref{thm:end},
we obtain some structural properties of irreducible \Cs-discrete inclusions (c.f. Definition \ref{defn:C*Disc}). 
\begin{cor}\label{cor:csirr}
    Let $A$ and $B$ be \Cs-algebras, and 
    let $B\overset{E}{\subset}A$ be an irreducible \Cs-discrete inclusion.
    If $B$ is separable and simple,
    then the following hold.
    \begin{enumerate}
        \item When $A$ is separable,
        there is a pure state $\omega\colon A\to \IC$ with $\omega=\omega\circ E$.
        \item The inclusion $B\subset A$ is \Cs-irreducible. In particular, $A$ is automatically simple. 
        \item If $B$ is purely infinite,
        then every intermediate \Cs-algebra is purely infinite.
        \item $E$ is the unique conditional expectation from $A$ onto $B$.
        \item If $E$ is of finite index,
        then $B\subset A$ has the property (BEK).
    \end{enumerate}
\end{cor}

\begin{proof}
We show (1).
 The required pure state is obtained by the same argument as in the proofs of \cite[Lemma 3.2]{MR634163} and \cite[Theorem 3.1]{MR1234394}, but we recall the proof for the reader's convenience since our setting is a little bit different from that in the references.
Since $A$ is separable and the inclusion $B\subset A$ is \Cs-discrete,
we can take a sequence $\{a_{n}\}_{n}$ in $\PQN(B\subset A)$, which is dense in $A$.
Using Lemma \ref{lem:functional_calculus} and Lemma \ref{lem:PQN} (3) repeatedly,
    we get a sequence $(b_{N})_{N}$ of $B_{+}$ satisfying 
    \[\|b_{N}\|=1,\ 
    b_{N}b_{N+1}=b_{N+1},\quad\text{ and }\quad\ 
    \|b_{N}(a_{n}-E(a_{n}))b_{N}\|<\frac{1}{N}\]
    for all $N$ and $1\leq n\leq N$.
    Indeed,
    let $H_{1}:=B$.
    By Lemma \ref{lem:PQN} (3),
    we can take $b_{1}\in H_{1+}$ with $\|b_{1}\|=1$ and $\|b_{1}(a_{1}-E(a_{1}))b_{1}\|<1$.
    By Lemma \ref{lem:functional_calculus}, after replacing $b_1$ if necessary,
    we may assume that $H_{2}:=\{b\in H_{1}\mid bb_{1}=b_{1}b=b\}\subset H_{1}$ is nonzero.
    Using the same argument repeatedly,
    we get $b_{2}\in H_{2+}$ satisfying $\|b_{2}\|=1$, $\max_{i\leq 2}\|b_{2}(a_{i}-E(a_{i}))b_{2}\|<\frac{1}{2}$, and $H_{3}:=\{b\in H_{2}\mid bb_{2}=b_{2}b=b\}\neq0$.
    Similarly, we get the required sequence $(b_{N})_{N}$ by induction.
    Hence,
    we get $\lim_{N\to\infty}\|b_{N}(a_{n}-E(a_{n}))b_{N}\|=0
    $
    for all $n$. 
   Since $\PQN(B\subset A)$ is dense in $A$,
    this implies 
    \begin{equation}\label{eq:relative_ex}
    \lim_{N\to\infty}\|b_{N}(a-E(a))b_{N}\|=0
    \end{equation}
    for every $a\in A$.
    Set $K_{N}:=\{\varphi\mid\varphi \ \text{ is a state of $A$ with }\ \varphi(b_N^{2})=1\}$.
    Since $(K_N)_{N}$ is a decreasing sequence of nonempty faces of the state space of $A$,
    we get a pure state $\omega$ in the set of extreme points of $\cap_{N}K_{N}$.
    For each $N$,
    since we have $1\geq\omega(b_{N})\geq\omega(b_{N}^{2})=1$ by the definition of $K_{N}$,
    $b_{N}$ belongs to the multiplicative domain of $\omega$.
    Thus,
    we get 
    \[
    \omega(a)=\omega(b_{N}ab_{N})
    \]
     for all $N$ and $a\in A$.
    Thanks to (\ref{eq:relative_ex}),
    we have $\omega=\omega\circ E$.
\smallskip

    We show (2).
    Let $D$ be an intermediate \Cs-algebra between $B$ and $A$, and let $I$ be a nonzero ideal of $D$.
    It is enough to show $D=I$.
    Take positive elements $d\in I$ and $b\in B$ with $\|E(d)\|=\|b\|=1$.
    By Lemma \ref{lem:PQN} (3),
    for each $\epsilon>0$,
    we can take a positive element $c\in B$ such that
$cdc\approx_{\epsilon}cE(d)c\;\text{ and }\|cE(d)c\|=1$
    hold.
    Using \cite[Lemma A.2]{MR4377313},
    we get $x_{1},\dots,x_{n}\in B$ satisfying
    \begin{equation}\label{eq:approx}\sum_{i=1}^{n}x_{i}cdcx_{i}^{*}\approx_{\epsilon}\sum_{i=1}^{n}x_{i}cE(d)cx_{i}^{*}\approx_{\epsilon}b.\end{equation}
    %Since the left-hand side is contained in $I$, and $B$ is simple,
    Since the left-hand side belongs to $I$, and $B$ is simple,
    we see that $I\cap B=B$.
    Note that the inclusion $B\subset D$ is nondegenerate.
Indeed, since $B\subset A$ is nondegenerate, there exists an approximate unit for $A$ contained in $B$, and hence $B\subset D$ is nondegenerate.
Thus, we get $I=D$.

    We show (3). 
    Suppose $B$ is purely infinite and simple,
    then $B$ is either unital or stable \cite[Theorem 1.2]{MR1174483}.
    When $B$ is stable, 
    there is a projection $p\in B$, and we get an isomorphism of inclusions
    \[pBp\otimes\IK\overset{E|_{pAp}\otimes\id_{\IK}}{\subset}pAp\otimes\IK\cong B\overset{E}{\subset}A.\]
    Indeed, since $B$ is purely infinite simple, taking a nonzero projection $q\in B$, 
    we have $B\cong B\otimes\IK\cong qBq\otimes\IK$ by Brown's theorem \cite{MR454645}.
    Thus,
    we obtain nondegenerate inclusions $\IK\subset B\subset A$.
    That is, the inclusion $\IK\cong q\otimes\IK\subset qBq\otimes\IK\cong B$ is nondegenerate.
    Let $\{e_{i,j}\}_{i,j}$ be matrix units of $\IK$ and set $p:=e_{1,1}$.
    Then the following $*$-homomorphisms are mutually inverse
    \begin{align*}
        A&\overset{\theta}{\longrightarrow} pAp\otimes\IK,\qquad\qquad\qquad pAp\otimes\IK\overset{\theta^{-1}}{\longrightarrow}A\\
        a&\mapsto \sum_{i,j}e_{1,i}ae_{j,1}\otimes e_{i,j},\qquad\quad
        a\otimes e_{i,j}\mapsto e_{i,1}ae_{1,j}.
    \end{align*}
    The restriction of $\theta$ gives an isomorphism between $B$ and $pBp\otimes\IK$.
    Using $\theta^{-1}$,
    for any $a\otimes e_{i,j}\in pAp\otimes\IK$,
    we get 
   \[E(\theta^{-1}(a\otimes e_{i,j}))=E(e_{i,1}ae_{1,j})=e_{i,1}E(a)e_{1,j}=\theta^{-1}\left((E|_{pAp}\otimes\id_{\IK})(a\otimes e_{i,j})\right).
   \]
   Here, we use $e_{i,j}\in\IK\subset B$ for the second equality.
   Thus, we have the required isomorphism of inclusions preserving the conditional expectations.
    
    Replacing $B\subset A$ with $pBp\subset pAp$,
    we may assume $B$ is unital.
    Take an intermediate subalgebra $D$ between $B$ and $A$ and a positive nonzero element $d\in D$ with $\|E(d)\|=1$.
    Considering the approximation (\ref{eq:approx}) for $b=1$,
    we may assume $n=1$ since $B$ is purely infinite simple.
    Hence,
    we get $x_{1}cdcx_{1}^{*}\approx_{2\epsilon}1$.
    This implies that for each $d\in D_{+}\setminus\{0\}$ there exists $y\in D$ with $ydy^{*}=1$. Hence, $D$ is purely infinite simple.
\smallskip

    For (4),
    let $F\colon A\to B$ be a conditional expectation.
    If $F\neq E$,
    then there is a self-adjoint element $a\in A$ such that $\|F(a-E(a))\|=1$.
    Since $B\subset A$ is \Cs-discrete,
    we may assume that $a\in \PQN(B\subset A)$. Replacing $a$ with $-a$ if necessary, we have that $\sup{\rm Spec}(F(a-E(a)))=1$. 
    By Lemma \ref{lem:functional_calculus},
    for any $\epsilon>0$,
    we get a self-adjoint element $b^{\prime}\approx_{\epsilon}F(a-E(a))$ such that the hereditary subalgebra $H_{b^{\prime}}:=\{x\in B\mid xb^{\prime}=b^{\prime}x=x\}\subset B$ is nonzero. 
    By Lemma \ref{lem:PQN} (3), we get $b\in H_{b^{\prime}+}$ with $\|b\|=1$ such that $\|b(a-E(a))b\|<\epsilon$.
    It follows that
    \[1=\|bb^{\prime}b\|\approx_{\epsilon}\|b(F(a-E(a)))b\|=\|F(b(a-E(a))b)\|<\epsilon,\] which is absurd. Therefore, we get $E=F$.

   For (5),
    since $A=\PQN(B\subset A)$,
    the statement follows from Lemma \ref{lem:PQN} (3) and \cite[Theorem 3.5 (3)]{doi:10.1142/S0129055X24610026}.
\end{proof}
\begin{rmk}
\begin{itemize}
    \item 
    Corollary \ref{cor:csirr} (1) is an analogue of Theorem \ref{thm:qpa} (3).
    \item If the inclusion $B\subset A$ is irreducible and of finite index,
    the statement in Corollary \ref{cor:csirr} (4) is well-known and follows easily from \cite[Theorem 2.12.3]{MR996807}.
    \item
    By definition, finite-index inclusions of \Cs-algebras are \Cs-discrete.
    Corollary \ref{cor:csirr} (5) implies that the equivalent conditions in \cite[Theorem 3.5]{doi:10.1142/S0129055X24610026} automatically hold.
    \end{itemize}
\end{rmk}

\appendix

\section{Appendix}

In \cite{doi:10.1142/S0129055X24610026}, Corollary 3.4 and Theorem 3.1 are proved by using \cite[Theorem 3.1]{MR1234394}, and we noticed some potential gaps in the proof of \cite[Theorem 3.1]{MR1234394}. 
    For the reader's convenience, we shall provide here a revised argument due to Izumi.

\begin{thm}[Theorem 3.1 of \cite{MR1234394}]
    Let $B\subset A$ be an inclusion of separable \Cs-algebras,
    then the following are equivalent.
    \begin{enumerate}
        \item The following holds for every $x,y\in A$:\[\sup\{\|xby\|\;\mid b\in B, \|b\|=1\}=\|x\|\|y\|.\]
        \item There exists $\delta>0$ satisfying
        \[\sup\{\|xby\|\;\mid b\in B, \|b\|=1\}\geq\delta\|x\|\|y\|.\]
        \item There exists a faithful irreducible representation $\pi$ of $A$ such that the restriction $\pi|_{B}$ is also irreducible.
    \end{enumerate}
\end{thm}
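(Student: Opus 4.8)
The plan is to prove the cycle $(1)\Rightarrow(2)\Rightarrow(3)\Rightarrow(1)$. The implication $(1)\Rightarrow(2)$ is immediate with $\delta=1$. For $(3)\Rightarrow(1)$, let $\pi\colon A\to B(\cH)$ be a faithful irreducible representation with $\pi|_{B}$ irreducible. Since $\pi$ is isometric, $\|xby\|=\|\pi(x)\pi(b)\pi(y)\|$, and the bound $\sup\{\|xby\|\mid\|b\|=1\}\le\|x\|\|y\|$ is automatic, so it suffices to produce $b$ with $\|xby\|$ close to $\|x\|\|y\|$. Choosing unit vectors $\xi,\zeta\in\cH$ with $\|\pi(y)\xi\|\approx\|y\|$ and $\|\pi(x)\zeta\|\approx\|x\|$ and setting $\eta:=\pi(y)\xi/\|\pi(y)\xi\|$, the rank-one partial isometry $T:=|\zeta\rangle\langle\eta|$ has norm one and satisfies $\|\pi(x)T\pi(y)\xi\|=\|\pi(y)\xi\|\,\|\pi(x)\zeta\|\approx\|x\|\|y\|$. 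As $\pi(B)$ is irreducible, $\pi(B)''=B(\cH)$, so by Kaplansky's density theorem $T$ is a strong-$*$ limit of a net in the unit ball of $\pi(B)$; passing to such approximants $\pi(b)$ gives $\|\pi(x)\pi(b)\pi(y)\|\ge\|\pi(x)\pi(b)\pi(y)\xi\|\to\|x\|\|y\|$, which yields $(1)$ after normalizing $b$ to norm one.

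The substance of the theorem is $(2)\Rightarrow(3)$. First I would observe that $(2)$ forces $A$ to be prime: if $I,J\trianglelefteq A$ are nonzero closed ideals and $0\ne x\in I$, $0\ne y\in J$, then $xby\in I\cap J$ for every $b\in B$, whereas $(2)$ gives $\sup_{\|b\|=1}\|xby\|\ge\delta\|x\|\|y\|>0$, so $I\cap J\ne 0$. A separable prime \Cs-algebra is primitive, hence $A$ admits a faithful irreducible representation; the remaining task is to arrange simultaneously that the restriction to $B$ is irreducible.

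To do this I would run a Baire category argument in the pure state space $P(A)$, which is Polish since $A$ is separable. Writing $(\pi_{\omega},\cH_{\omega},\xi_{\omega})$ for the GNS data of $\omega\in P(A)$, recall that $\pi_{\omega}|_{B}$ is irreducible if and only if $\omega|_{B}$ is a pure state of $B$ and $\xi_{\omega}$ is cyclic for $\pi_{\omega}(B)$, the latter meaning $\inf_{b\in B}\omega((a-b)^{*}(a-b))=0$ for all $a\in A$. One checks that $F:=\{\omega\mid\ker\pi_{\omega}=0\}$ and $G:=\{\omega\mid\pi_{\omega}|_{B}\text{ is irreducible}\}$ are both $G_{\delta}$ subsets of $P(A)$. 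Primitivity makes $F$ a dense $G_{\delta}$. If $G$ is also dense, then by the Baire category theorem $F\cap G\ne\emptyset$, and any $\omega\in F\cap G$ has $\pi_{\omega}$ faithful, irreducible, with $\pi_{\omega}|_{B}$ irreducible, establishing $(3)$.

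The main obstacle is the density of $G$, and it is precisely here that condition $(2)$ is indispensable and that the original argument of \cite{MR1234394} requires repair. The task is, given $\omega_{0}\in P(A)$, finitely many $a_{1},\dots,a_{m}\in A$ and $\epsilon>0$, to produce $\omega\in G$ with $|\omega(a_{i})-\omega_{0}(a_{i})|<\epsilon$. I would build $\omega$ as a weak-$*$ cluster point of excised functionals, using a sequence $(b_{N})\subset B_{+}$ of norm-one elements with $b_{N}b_{N+1}=b_{N+1}$, in the spirit of \cite[Lemma 3.2]{MR634163} and of the proof of Corollary \ref{cor csirr}(1), chosen so that the limit is pure on $A$ and restricts to a pure state of $B$. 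The quantitative lower bound $\sup_{\|b\|=1}\|xby\|\ge\delta\|x\|\|y\|$ of $(2)$ is exactly what keeps each approximation step nondegenerate and, most delicately, forces $\xi_{\omega}$ to be cyclic for $\pi_{\omega}(B)$, i.e.\ that every $\pi_{\omega}(a_{i})\xi_{\omega}$ can be approximated from $\pi_{\omega}(B)\xi_{\omega}$. I expect establishing this cyclicity in the limit—equivalently, that no nontrivial part of the representation escapes the $B$-action—to be the crux of the argument and the step where the earlier proof falls short. Once $G$ is dense, Baire's theorem gives $F\cap G\ne\emptyset$, completing $(2)\Rightarrow(3)$ and closing the cycle.
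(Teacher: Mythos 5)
Your handling of $(1)\Rightarrow(2)$ and $(3)\Rightarrow(1)$ is fine; for the latter the paper simply invokes Kadison transitivity, while your Kaplansky-density argument is an equally valid (essentially equivalent) route. The observation that $(2)$ forces $A$ to be prime, hence primitive by separability, is also correct and is implicit in the paper's setup.

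The problem is $(2)\Rightarrow(3)$, which is the entire content of the theorem, and here your proposal stops exactly where the proof has to begin. You reduce the statement to the density of the set $G$ of pure states $\omega$ with $\pi_{\omega}|_{B}$ irreducible, and then write that you ``expect establishing this cyclicity in the limit \dots to be the crux of the argument and the step where the earlier proof falls short.'' That expectation is accurate, but no argument is supplied: you do not explain how the sequence $(b_{N})\subset B_{+}$ is to be chosen, nor why the hypothesis $\sup_{\|b\|=1}\|xby\|\geq\delta\|x\|\|y\|$ survives the passage to a weak-$*$ cluster point so as to force $\xi_{\omega}$ to be cyclic for $\pi_{\omega}(B)$. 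This is precisely the step that the appendix of the paper repairs, following Izumi. Concretely, the paper fixes dense sequences of unitaries $(u_{n},v_{n})$ in $\tilde{A}$ and a decreasing sequence of ideals $(I_{n})$, and then builds, by induction, positive norm-one elements $e_{n},e_{n}'$ with $e_{n}'e_{n}=e_{n}'$, elements $b_{n}\in B$ with $\sup\mathrm{Spec}\bigl(e_{n}'(u_{n}b_{n}v_{n}+v_{n}^{*}b_{n}^{*}u_{n}^{*})e_{n}'\bigr)\geq\delta/2$, and a next element $e_{n+1}$ defined by functional calculus (the functions $f$ and $g$ in the paper) applied to $e_{n}'f(e_{n}'z_{n}e_{n}')e_{n}'$; the point of this construction is that any pure state $\omega$ extremal in $\bigcap_{n}\{\varphi\mid\varphi(e_{n})=1\}$ satisfies the uniform lower bound $\mathrm{Re}\,\omega(u_{n}b_{n}v_{n})\geq\delta/9$ for all $n$, which is what yields irreducibility of $\pi_{\omega}|_{B}$ together with faithfulness. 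Without an argument of this quantitative kind (or a proof that your set $G$ is a dense $G_{\delta}$, which amounts to the same difficulty), the proposal does not prove the implication; it only restates where the difficulty lies.
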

\begin{proof}
    The implication $(1)\Rightarrow(2)$ is trivial, and $(3)\Rightarrow(1)$ follows from Kadison transitivity.

    To get $(2)\Rightarrow(3)$,
    we prepare a sequence $\{(u_{n},v_{n})\}_{n=1}^{\infty}$
    of pairs of unitaries in $\tilde{A}$ and decreasing sequence of ideals $(I_{n})_{n=1}^{\infty}$ of $A$ as in the proof of \cite[Theorem 3.1]{MR1234394}.
    To get a required irreducible representation,
    it is enough to show that there is a pure state $\omega$ of $A$ and $\delta_{1}>0$ such that
    \begin{equation}\label{ineq:BEK}
    \sup\{\mathrm{Re}\;\omega(u_{n}bv_{n})\mid b\in B, \|b\|=1\}\geq\delta_{1} 
    \end{equation}
    holds for every $n$.
   To get $\omega$,
   we can use a strategy similar to that used in the proof of  \cite[Theorem 3.1]{MR1234394}.
   However, we need to revise the construction of the decreasing sequence $(e_n)_{n}$ of positive elements as follows.
   
    Using $\delta>0$ in the assumption,
    set continuous functions $f\colon[-2,2]\to[0,1]$ and $g\colon[0,1]\to[0,1]$ as follows:
    \begin{align*}
  f(t)&:=
  \begin{cases}
    0 & \text{if $t<\frac{\delta}{3}$,} \\
    \frac{6}{\delta}t-2                 & \text{if $\frac{\delta}{3}\leq t\leq\frac{\delta}{2}$,} \\
   1       & \text{if $\frac{\delta}{2}<t$,}
  \end{cases}\\
  g(t)&:=
  \begin{cases}
      (1-\frac{\delta}{12})^{-1}t&\text{if $t<1-\frac{\delta}{12}$,}\\
      1&\text{if $t\geq1-\frac{\delta}{12}$.}
  \end{cases}
\end{align*}
Let $e_{1}$ and $e_{1}^{\prime}$ be positive norm-one elements of $I_{1}$ such that $e_{1}e_{1}^{\prime}=e_{1}^{\prime}$ holds.
Using our assumptions,
there exists a norm-one element $b_{1}\in B$ such that
\[\sup\mathrm{Spec}(e_{1}^{\prime}(u_{1}b_{1}v_{1}+v_{1}^{*}b_{1}^{*}u_{1}^{*})e_{1}^{\prime})\geq \frac{\delta}{2}.\]
Set $z_{1}:=u_{1}b_{1}v_{1}+v_{1}^{*}b_{1}^{*}u_{1}^{*}$ and $c_{1}:=\|e_{1}^{\prime}f(e_{1}^{\prime}z_{1}e_{1}^{\prime})e_{1}^{\prime}\|^{-1}$.
Let $e_{2}:=g(c_{1}e_{1}^{\prime}f(e_{1}^{\prime}z_{1}e_{1}^{\prime})e_{1}^{\prime})$,
then $e_{2}e_{1}=e_{2}$ holds.
Since $A$ is prime,
the intersection of an ideal $I_{2}$ and a nonzero hereditary subalgebra $\{a\in A\mid ae_{2}=a=e_{2}a\}$ is not equal to zero.
Hence,
we can take a positive norm-one element $e_{2}^{\prime}$ of $I_{2}$ such that $e_{2}^{\prime}e_{2}=e_{2}^{\prime}$holds.
Repeating the same discussion,
we get sequences $(e_{n})_{n}$,
$(e_{n}^{\prime})_{n}$,
$(b_{n})_{n}$ and $(z_{n})_{n}$ such that the following hold for each $n$:
\begin{itemize}
    \item Each $e_{n}$ and $e_{n}^{\prime}$ are positive norm-one elements satisfying $e_{n}\in I_{n-1}$,
    $e_{n}^{\prime}\in I_{n}$, and $e_{n}^{\prime}e_{n}=e_{n}^{\prime}$.
    \item Each $b_{n}$ is a norm-one element of $B$ such that
    \[\sup\mathrm{Spec}(e_{n}^{\prime}(u_{n}b_{n}v_{n}+v_{n}^{*}b_{n}^{*}u_{n}^{*})e_{n}^{\prime})\geq \frac{\delta}{2}.\]
    \item Setting $z_{n}:=u_{n}b_{n}v_{n}+v_{n}^{*}b_{n}^{*}u_{n}^{*}$ and $c_{n}:=\|e_{n}^{\prime}f(e_{n}^{\prime}z_{n}e_{n}^{\prime})e_{n}^{\prime}\|^{-1}$,
    we have
    \[e_{n+1}=g(c_{n}e_{n}^{\prime}f(e_{n}^{\prime}z_{n}e_{n}^{\prime})e_{n}^{\prime}).\]
\end{itemize}

By construction,
we get $e_{n}e_{n+1}=e_{n+1}$.
Since a decreasing sequence \[(K_{n}:=\{\varphi\mid\varphi\;\text{ is a state of $A$ with }\varphi(e_{n})=1\})_{n}\] consists faces in the state space of $A$,
we get a pure state $\omega$ in the set of extremal point of $\cap_{n}K_{n}$.
Using $\omega$,
we get 
\begin{align*}
    \mathrm{Re}\;\omega(u_{n}b_{n}v_{n})&=\omega(z_{n})=\omega(e_{n+1}z_{n}e_{n+1})\\
    &=\omega(g(c_{n}e_{n}^{\prime}f(e_{n}^{\prime}z_{n}e_{n}^{\prime})e_{n}^{\prime})z_{n}g(c_{n}e_{n}^{\prime}f(e_{n}^{\prime}z_{n}e_{n}^{\prime})e_{n}^{\prime}))\\
    &\geq c_{n}^{2}\omega(e_{n}^{\prime}f(e_{n}^{\prime}z_{n}e_{n}^{\prime})e_{n}^{\prime}z_{n}e_{n}^{\prime}f(e_{n}^{\prime}z_{n}e_{n}^{\prime})e_{n}^{\prime})-\frac{\delta}{6}\\
    &\geq\frac{\delta}{3} c_{n}^{2}\omega(e_{n}^{\prime}f(e_{n}^{\prime}z_{n}e_{n}^{\prime})^{2}e_{n}^{\prime})-\frac{\delta}{6}\\
    &\geq \frac{\delta}{3} c_{n}^{2}\omega(e_{n}^{\prime}f(e_{n}^{\prime}z_{n}e_{n}^{\prime})e_{n}^{\prime2}f(e_{n}^{\prime}z_{n}e_{n}^{\prime})e_{n}^{\prime})-\frac{\delta}{6}\\
    &\geq\frac{\delta}{3} \left(\omega(e_{n}^{2})-\frac{\delta}{6}\right)-\frac{\delta}{6}=\frac{\delta}{9}
\end{align*}
for every $n$.
Setting $\delta_{1}:=\frac{\delta}{9}$,
we get the required inequality (\ref{ineq:BEK}).
\end{proof}
%%=============================================%%
%% For submissions to Nature Portfolio Journals %%
%% please use the heading ``Extended Data''.   %%
%%=============================================%%

%%=============================================================%%
%% Sample for another appendix section			       %%
%%=============================================================%%

%% \section{Example of another appendix section}\label{secA2}%
%% Appendices may be used for helpful, supporting or essential material that would otherwise 
%% clutter, break up or be distracting to the text. Appendices can consist of sections, figures, 
%% tables and equations etc.

%%===========================================================================================%%
%% If you are submitting to one of the Nature Portfolio journals, using the eJP submission   %%
%% system, please include the references within the manuscript file itself. You may do this  %%
%% by copying the reference list from your .bbl file, paste it into the main manuscript .tex %%
%% file, and delete the associated \verb+\bibliography+ commands.                            %%
%%===========================================================================================%%

%\bibliographystyle{plain}
 \bibliography{bibliography} 

@article{JP17,
	Author = {Jones, Corey and Penneys, David},
	Date-Added = {2020-02-17 10:54:39 -0800},
	Date-Modified = {2020-02-17 10:54:50 -0800},
	Doi = {10.1007/s00220-017-2964-0},
	Fjournal = {Communications in Mathematical Physics},
	Issn = {0010-3616},
	Journal = {Comm. Math. Phys.},
	Mrclass = {46L10 (18D10)},
	Mrnumber = {3687214},
	Mrreviewer = {G. V. Rozenblum},
	Number = {3},
	Pages = {1121--1188},
	Title = {Operator algebras in rigid {$\rm C^*$}-tensor categories},
	Url = {https://doi.org/10.1007/s00220-017-2964-0},
	Volume = {355},
	Year = {2017},
	Bdsk-Url-1 = {https://doi.org/10.1007/s00220-017-2964-0}
}

@article{MR860811,
	Author = {Pimsner, Mihai and Popa, Sorin},
	Date-Added = {2020-03-04 12:39:51 -0800},
	Date-Modified = {2020-03-04 12:39:51 -0800},
	Fjournal = {Annales Scientifiques de l'\'{E}cole Normale Sup\'{e}rieure. Quatri\`eme S\'{e}rie},
	Issn = {0012-9593},
	Journal = {Ann. Sci. \'{E}cole Norm. Sup. (4)},
	Mrclass = {46L35 (46L55)},
	Mrnumber = {860811},
	Mrreviewer = {Vaughan Jones},
	Number = {1},
	Pages = {57--106},
	Title = {Entropy and index for subfactors},
	Url = {http://www.numdam.org/item?id=ASENS_1986_4_19_1_57_0},
	Volume = {19},
	Year = {1986},
	Bdsk-Url-1 = {http://www.numdam.org/item?id=ASENS_1986_4_19_1_57_0}
}

@article {MR4328058,
    AUTHOR = {Jones, Corey},
     TITLE = {Remarks on anomalous symmetries of {C}*-algebras},
   JOURNAL = {Comm. Math. Phys.},
  FJOURNAL = {Communications in Mathematical Physics},
    VOLUME = {388},
      YEAR = {2021},
    NUMBER = {1},
     PAGES = {385--417},
      ISSN = {0010-3616},
   MRCLASS = {46L55 (18G40 46L40 46L60)},
  MRNUMBER = {4328058},
       DOI = {10.1007/s00220-021-04234-4},
       URL = {https://doi.org/10.1007/s00220-021-04234-4},
}

@article {MR4139893,
    AUTHOR = {Hartglass, Michael and Hern\'{a}ndez Palomares, Roberto},
     TITLE = {Realizations of rigid {$\rm C^*$}-tensor categories as
              bimodules over {GJS} {$\rm C^*$}-algebras},
   JOURNAL = {J. Math. Phys.},
  FJOURNAL = {Journal of Mathematical Physics},
    VOLUME = {61},
      YEAR = {2020},
    NUMBER = {8},
     PAGES = {081703, 32},
      ISSN = {0022-2488},
   MRCLASS = {18M05 (46L08 46L37 46M15)},
  MRNUMBER = {4139893},
       DOI = {10.1063/5.0015294},
       URL = {https://doi.org/10.1063/5.0015294},
}

@article {MR4419534,
    AUTHOR = {Chen, Quan and Hern\'{a}ndez Palomares, Roberto and Jones, Corey
              and Penneys, David},
     TITLE = {Q-system completion for {$\rm C^*$} 2-categories},
   JOURNAL = {J. Funct. Anal.},
  FJOURNAL = {Journal of Functional Analysis},
    VOLUME = {283},
      YEAR = {2022},
    NUMBER = {3},
     PAGES = {Paper No. 109524, 59},
      ISSN = {0022-1236},
   MRCLASS = {46L37 (18M15 18M20 18M30 18N10 46L80)},
  MRNUMBER = {4419534},
       DOI = {10.1016/j.jfa.2022.109524},
       URL = {https://doi.org/10.1016/j.jfa.2022.109524},
}

@article {MR4566007,
    AUTHOR = {Evington, Samuel and Gir\'{o}n Pacheco, Sergio},
     TITLE = {Anomalous symmetries of classifiable {C}*-algebras},
   JOURNAL = {Studia Math.},
  FJOURNAL = {Studia Mathematica},
    VOLUME = {270},
      YEAR = {2023},
    NUMBER = {1},
     PAGES = {73--101},
      ISSN = {0039-3223},
   MRCLASS = {46L35 (46L37)},
  MRNUMBER = {4566007},
       DOI = {10.4064/sm220117-25-6},
       URL = {https://doi.org/10.4064/sm220117-25-6},
}

@article {MR996807,
    AUTHOR = {Watatani, Yasuo},
     TITLE = {Index for {$C^*$}-subalgebras},
   JOURNAL = {Mem. Amer. Math. Soc.},
  FJOURNAL = {Memoirs of the American Mathematical Society},
    VOLUME = {83},
      YEAR = {1990},
    NUMBER = {424},
     PAGES = {vi+117},
      ISSN = {0065-9266},
   MRCLASS = {46L05 (19K14 46L40 46L80 46M20)},
  MRNUMBER = {996807},
MRREVIEWER = {Masatoshi Enomoto},
       DOI = {10.1090/memo/0424},
       URL = {https://doi.org/10.1090/memo/0424},
}

@article {MR696688,
    AUTHOR = {Jones, V. F. R.},
     TITLE = {Index for subfactors},
   JOURNAL = {Invent. Math.},
  FJOURNAL = {Inventiones Mathematicae},
    VOLUME = {72},
      YEAR = {1983},
    NUMBER = {1},
     PAGES = {1--25},
      ISSN = {0020-9910},
   MRCLASS = {46L35 (10D07)},
  MRNUMBER = {696688},
MRREVIEWER = {Yasuo Watatani},
       DOI = {10.1007/BF01389127},
       URL = {https://doi.org/10.1007/BF01389127},
}

@article {MR2085108,
    AUTHOR = {Kajiwara, Tsuyoshi and Pinzari, Claudia and Watatani, Yasuo},
     TITLE = {Jones index theory for {H}ilbert {$C^*$}-bimodules and its equivalence with conjugation theory},
   JOURNAL = {J. Funct. Anal.},
  FJOURNAL = {Journal of Functional Analysis},
    VOLUME = {215},
      YEAR = {2004},
    NUMBER = {1},
     PAGES = {1--49},
      ISSN = {0022-1236},
   MRCLASS = {46L08 (46L05 46L60)},
  MRNUMBER = {2085108},
MRREVIEWER = {Michael Frank},
       DOI = {10.1016/j.jfa.2003.09.008},
       URL = {https://doi.org/10.1016/j.jfa.2003.09.008},
}

@article {MR4133163,
    AUTHOR = {Penneys, David},
     TITLE = {Unitary dual functors for unitary multitensor categories},
   JOURNAL = {High. Struct.},
  FJOURNAL = {Higher Structures},
    VOLUME = {4},
      YEAR = {2020},
    NUMBER = {2},
     PAGES = {22--56},
   MRCLASS = {18M05 (46L37)},
  MRNUMBER = {4133163},
}

@article {MR3204665,
    AUTHOR = {Neshveyev, Sergey and Tuset, Lars},
     TITLE = {Compact quantum groups and their representation categories, Cours Sp\'ecialis\'es [Specialized Courses], Soci\'et\'e{} Math\'ematique de France, Paris, vol. 20},
    SERIES = {Cours Sp\'ecialis\'es [Specialized Courses]},
 PUBLISHER = {Soci\'et\'e{} Math\'ematique de France, Paris},
      YEAR = {2013},
      ISBN = {978-2-85629-777-3},
   MRCLASS = {46L65 (17B37 46L89 81-02 81R05 81R10 81R50)},
  MRNUMBER = {3204665},
MRREVIEWER = {Julien\ Bichon},
}

@article {MR3948170,
    AUTHOR = {Jones, Corey and Penneys, David},
     TITLE = {Realizations of algebra objects and discrete subfactors},
   JOURNAL = {Adv. Math.},
  FJOURNAL = {Advances in Mathematics},
    VOLUME = {350},
      YEAR = {2019},
     PAGES = {588--661},
      ISSN = {0001-8708},
   MRCLASS = {46L10 (16B50)},
  MRNUMBER = {3948170},
       DOI = {10.1016/j.aim.2019.04.039},
       URL = {https://doi.org/10.1016/j.aim.2019.04.039},
}

@article {MR1622812,
    AUTHOR = {Izumi, Masaki and Longo, Roberto and Popa, Sorin},
     TITLE = {A {G}alois correspondence for compact groups of automorphisms
              of von {N}eumann algebras with a generalization to {K}ac
              algebras},
   JOURNAL = {J. Funct. Anal.},
  FJOURNAL = {Journal of Functional Analysis},
    VOLUME = {155},
      YEAR = {1998},
    NUMBER = {1},
     PAGES = {25--63},
      ISSN = {0022-1236},
   MRCLASS = {46L10},
  MRNUMBER = {1622812},
       DOI = {10.1006/jfan.1997.3228},
       URL = {https://doi.org/10.1006/jfan.1997.3228},
}

@article {MR4010423,
    AUTHOR = {Cameron, Jan and Smith, Roger R.},
     TITLE = {A {G}alois correspondence for reduced crossed products of
              simple {$\rm C^*$}-algebras by discrete groups},
   JOURNAL = {Canad. J. Math.},
  FJOURNAL = {Canadian Journal of Mathematics. Journal Canadien de
              Math\'{e}matiques},
    VOLUME = {71},
      YEAR = {2019},
    NUMBER = {5},
     PAGES = {1103--1125},
      ISSN = {0008-414X},
   MRCLASS = {46L55 (46L40)},
  MRNUMBER = {4010423},
MRREVIEWER = {Robert S. Doran},
       DOI = {10.4153/cjm-2018-014-6},
       URL = {https://doi.org/10.4153/cjm-2018-014-6},
}

@article {MR1642530,
    AUTHOR = {Frank, Michael and Kirchberg, Eberhard},
     TITLE = {On conditional expectations of finite index},
   JOURNAL = {J. Operator Theory},
  FJOURNAL = {Journal of Operator Theory},
    VOLUME = {40},
      YEAR = {1998},
    NUMBER = {1},
     PAGES = {87--111},
      ISSN = {0379-4024},
   MRCLASS = {46L05 (46L10)},
  MRNUMBER = {1642530},
MRREVIEWER = {Robert S. Doran},
}

@article {MR1900138,
    AUTHOR = {Izumi, Masaki},
     TITLE = {Inclusions of simple {$C^\ast$}-algebras},
   JOURNAL = {J. Reine Angew. Math.},
  FJOURNAL = {Journal f\"{u}r die Reine und Angewandte Mathematik. [Crelle's
              Journal]},
    VOLUME = {547},
      YEAR = {2002},
     PAGES = {97--138},
      ISSN = {0075-4102},
   MRCLASS = {46L05 (46L37 46L55 46L80)},
  MRNUMBER = {1900138},
MRREVIEWER = {Yasuyuki Kawahigashi},
       DOI = {10.1515/crll.2002.055},
       URL = {https://doi.org/10.1515/crll.2002.055},
}

@article {MR1228532,
    AUTHOR = {Izumi, Masaki},
     TITLE = {Subalgebras of infinite {$C^*$}-algebras with finite
              {W}atatani indices. {I}. {C}untz algebras},
   JOURNAL = {Comm. Math. Phys.},
  FJOURNAL = {Communications in Mathematical Physics},
    VOLUME = {155},
      YEAR = {1993},
    NUMBER = {1},
     PAGES = {157--182},
      ISSN = {0010-3616},
   MRCLASS = {46L05 (46L10 46L37 46L40)},
  MRNUMBER = {1228532},
MRREVIEWER = {Yasuyuki Kawahigashi},
       URL = {http://projecteuclid.org/euclid.cmp/1104253206},
}

@article {MR1604162,
    AUTHOR = {Izumi, Masaki},
     TITLE = {Subalgebras of infinite {$C^*$}-algebras with finite
              {W}atatani indices. {II}. {C}untz-{K}rieger algebras},
   JOURNAL = {Duke Math. J.},
  FJOURNAL = {Duke Mathematical Journal},
    VOLUME = {91},
      YEAR = {1998},
    NUMBER = {3},
     PAGES = {409--461},
      ISSN = {0012-7094},
   MRCLASS = {46L05 (46L37)},
  MRNUMBER = {1604162},
MRREVIEWER = {Yasuyuki Kawahigashi},
       DOI = {10.1215/S0012-7094-98-09118-9},
       URL = {https://doi.org/10.1215/S0012-7094-98-09118-9},
}

@article {MR1325694,
    AUTHOR = {Lance, E. C.},
     TITLE = {Hilbert {$C^*$}-modules: A toolkit for operator algebraists},
    JOURNAL = {London Mathematical Society Lecture Note Series, Cambridge University Press, Cambridge, vol. 210}, 
      YEAR = {1995},
      ISBN = {0-521-47910-X},
   MRCLASS = {46L05 (46H25 46K05 46L80 46M05 47D25)},
  MRNUMBER = {1325694},
MRREVIEWER = {Robert\ S.\ Doran},
       DOI = {10.1017/CBO9780511526206}
}

@article {MR4599249,
    AUTHOR = {R{\o}rdam, Mikael},
     TITLE = {Irreducible inclusions of simple {$C^*$}-algebras},
   JOURNAL = {Enseign. Math.},
  FJOURNAL = {L'Enseignement Math\'{e}matique},
    VOLUME = {69},
      YEAR = {2023},
    NUMBER = {3-4},
     PAGES = {275--314},
      ISSN = {0013-8584},
   MRCLASS = {46L05 (46L35 46L55)},
  MRNUMBER = {4599249},
       DOI = {10.4171/lem/1051},
       URL = {https://doi.org/10.4171/lem/1051},
}

@article {MR2561199,
    AUTHOR = {Tomatsu, Reiji},
     TITLE = {A {G}alois correspondence for compact quantum group actions},
   JOURNAL = {J. Reine Angew. Math.},
  FJOURNAL = {Journal f\"{u}r die Reine und Angewandte Mathematik. [Crelle's
              Journal]},
    VOLUME = {633},
      YEAR = {2009},
     PAGES = {165--182},
      ISSN = {0075-4102},
   MRCLASS = {46L55 (20G42 46L10)},
  MRNUMBER = {2561199},
MRREVIEWER = {Kenny De Commer},
       DOI = {10.1515/CRELLE.2009.063},
       URL = {https://doi.org/10.1515/CRELLE.2009.063},
}

@article {MR634163,
    AUTHOR = {Kishimoto, Akitaka},
     TITLE = {Outer automorphisms and reduced crossed products of simple
              {$C\sp{\ast} $}-algebras},
   JOURNAL = {Comm. Math. Phys.},
  FJOURNAL = {Communications in Mathematical Physics},
    VOLUME = {81},
      YEAR = {1981},
    NUMBER = {3},
     PAGES = {429--435},
      ISSN = {0010-3616},
   MRCLASS = {46L40 (46L05)},
  MRNUMBER = {634163},
MRREVIEWER = {Dorte Olesen},
       URL = {http://projecteuclid.org/euclid.cmp/1103920327},
}

@article {MR4717816,
    AUTHOR = {Chen, Quan and Hern\'{a}ndez Palomares, Roberto and Jones, Corey},
     TITLE = {K-theoretic classification of inductive limit actions of
              fusion categories on {AF}-algebras},
   JOURNAL = {Comm. Math. Phys.},
  FJOURNAL = {Communications in Mathematical Physics},
    VOLUME = {405},
      YEAR = {2024},
    NUMBER = {3},
     PAGES = {Paper No. 83, 52},
      ISSN = {0010-3616},
   MRCLASS = {46L80 (18M20 19L47)},
  MRNUMBER = {4717816},
       DOI = {10.1007/s00220-024-04969-w},
       URL = {https://doi.org/10.1007/s00220-024-04969-w},
}

@article{doi:10.1142/S1793525325500256,
author = {Hern\'{a}ndez Palomares, Roberto and Nelson, Brent},
title = {Discrete inclusions of {C}*-algebras},
journal = {Journal of Topology and Analysis (to appear)},
keywords = {Mathematics - Operator Algebras, Mathematics - Quantum Algebra, 46L05, 46L37 (primary) 46L08, 46M15, 18M30 (secondary)},
year = {2025},
doi = {10.1142/S1793525325500256},
URL = {https://doi.org/10.1142/S1793525325500256},
eprint = {https://doi.org/10.1142/S1793525325500256},
pages = {1-51},
}

@ARTICLE{2024arXiv240918161H,
       author = {Hern{\'a}ndez Palomares, Roberto and {Nelson}, Brent},
        title = "{Remarks on C*-discrete inclusions}",
      journal = {arXiv e-prints (to appear in Journal of Noncommutative Geometry)},
     keywords = {Mathematics - Operator Algebras},
         year = 2026,
        month = sep,
          eid = {arXiv:2409.18161},
        pages = {arXiv:2409.18161},
          doi = {10.48550/arXiv.2409.18161},
archivePrefix = {arXiv},
       eprint = {2409.18161},
 primaryClass = {math.OA},
       adsurl = {https://ui.adsabs.harvard.edu/abs/2024arXiv240918161H},
      adsnote = {Provided by the SAO/NASA Astrophysics Data System}
}

@ARTICLE{2025arXiv250321515H,
       author = {Hern{\'a}ndez Palomares, Roberto},
        title = "{Discrete inclusions from Cuntz-Pimsner algebras}",
      journal = {arXiv e-prints},
     keywords = {Operator Algebras},
         year = 2025,
        month = mar,
          eid = {arXiv:2503.21515},
        pages = {arXiv:2503.21515},
archivePrefix = {arXiv},
       eprint = {2503.21515},
 primaryClass = {math.OA},
       adsurl = {https://ui.adsabs.harvard.edu/abs/2025arXiv250321515H},
      adsnote = {Provided by the SAO/NASA Astrophysics Data System}
}

@article {MR4813137,
    AUTHOR = {Mukohara, Miho},
     TITLE = {Inclusions of simple {$\rm C^\ast$}-algebras arising from
              compact group actions},
   JOURNAL = {J. Funct. Anal.},
  FJOURNAL = {Journal of Functional Analysis},
    VOLUME = {288},
      YEAR = {2025},
    NUMBER = {2},
     PAGES = {Paper No. 110702, 33},
      ISSN = {0022-1236},
   MRCLASS = {46L55 (46L05)},
  MRNUMBER = {4813137},
MRREVIEWER = {Diego Mart\'{\i}nez},
       DOI = {10.1016/j.jfa.2024.110702},
       URL = {https://doi.org/10.1016/j.jfa.2024.110702},
}

@article {MR650187,
    AUTHOR = {Olesen, Dorte and Pedersen, Gert K.},
     TITLE = {Applications of the {C}onnes spectrum to {$C\sp{\ast}
              $}-dynamical systems. {III}},
   JOURNAL = {J. Functional Analysis},
  FJOURNAL = {Journal of Functional Analysis},
    VOLUME = {45},
      YEAR = {1982},
    NUMBER = {3},
     PAGES = {357--390},
      ISSN = {0022-1236},
   MRCLASS = {46L55 (46L40)},
  MRNUMBER = {650187},
MRREVIEWER = {Ola Bratteli},
       DOI = {10.1016/0022-1236(82)90011-8},
       URL = {https://doi.org/10.1016/0022-1236(82)90011-8},
}

@article {MR1234394,
    AUTHOR = {Bratteli, Ola and Elliott, George A. and Kishimoto, Akitaka},
     TITLE = {Quasi-product actions of a compact group on a {$C^*$}-algebra},
   JOURNAL = {J. Funct. Anal.},
  FJOURNAL = {Journal of Functional Analysis},
    VOLUME = {115},
      YEAR = {1993},
    NUMBER = {2},
     PAGES = {313--343},
      ISSN = {0022-1236},
   MRCLASS = {46L55 (22C05 46L40)},
  MRNUMBER = {1234394},
MRREVIEWER = {Robert J. Archbold},
       DOI = {10.1006/jfan.1993.1093},
       URL = {https://doi.org/10.1006/jfan.1993.1093},
}

@article {MR969204,
    AUTHOR = {Brown, Lawrence G.},
     TITLE = {Semicontinuity and multipliers of {$C^*$}-algebras},
   JOURNAL = {Canad. J. Math.},
  FJOURNAL = {Canadian Journal of Mathematics. Journal Canadien de
              Math\'{e}matiques},
    VOLUME = {40},
      YEAR = {1988},
    NUMBER = {4},
     PAGES = {865--988},
      ISSN = {0008-414X},
   MRCLASS = {46L05},
  MRNUMBER = {969204},
MRREVIEWER = {Sze-Kai Tsui},
       DOI = {10.4153/CJM-1988-038-5},
       URL = {https://doi.org/10.4153/CJM-1988-038-5},
}

@article {MR4747811,
    AUTHOR = {Gabe, James and Szab\'{o}, G\'{a}bor},
     TITLE = {The dynamical {K}irchberg-{P}hillips theorem},
   JOURNAL = {Acta Math.},
  FJOURNAL = {Acta Mathematica},
    VOLUME = {232},
      YEAR = {2024},
    NUMBER = {1},
     PAGES = {1--77},
      ISSN = {0001-5962},
   MRCLASS = {37A20 (19K35 37A55 46L55)},
  MRNUMBER = {4747811},
       DOI = {10.4310/acta.2024.v232.n1.a1},
       URL = {https://doi.org/10.4310/acta.2024.v232.n1.a1},
}

@article{doi:10.1142/S0129055X24610026,
author = {Izumi, Masaki},
title = {Minimal compact group actions on {C}$^*$-algebras with simple fixed point algebras},
journal = {Reviews in Mathematical Physics (to appear)},
doi = {10.1142/S0129055X24610026},
URL = {https://doi.org/10.1142/S0129055X24610026}
}

@article {MR4377313,
    AUTHOR = {Suzuki, Yuhei},
     TITLE = {Non-amenable tight squeezes by {K}irchberg algebras},
   JOURNAL = {Math. Ann.},
  FJOURNAL = {Mathematische Annalen},
    VOLUME = {382},
      YEAR = {2022},
    NUMBER = {1-2},
     PAGES = {631--653},
      ISSN = {0025-5831},
   MRCLASS = {46L55 (46L05 46L07)},
  MRNUMBER = {4377313},
MRREVIEWER = {Philip M. Gipson},
       DOI = {10.1007/s00208-021-02262-y},
       URL = {https://doi.org/10.1007/s00208-021-02262-y},
}

@article {MR394228,
    AUTHOR = {Connes, Alain},
     TITLE = {Outer conjugacy classes of automorphisms of factors},
   JOURNAL = {Ann. Sci. \'Ecole Norm. Sup. (4)},
  FJOURNAL = {Annales Scientifiques de l'\'Ecole Normale Sup\'erieure.
              Quatri\`eme S\'erie},
    VOLUME = {8},
      YEAR = {1975},
    NUMBER = {3},
     PAGES = {383--419},
      ISSN = {0012-9593},
   MRCLASS = {46L10},
  MRNUMBER = {394228},
MRREVIEWER = {Hisashi\ Choda},
       URL = {http://www.numdam.org/item?id=ASENS_1975_4_8_3_383_0},
}

@article {MR3242743,
    AUTHOR = {Etingof, Pavel and Gelaki, Shlomo and Nikshych, Dmitri and
              Ostrik, Victor},
     TITLE = {Tensor categories, {M}athematical {S}urveys and {M}onographs, {A}merican {M}athematical {S}ociety, {P}rovidence, {RI}, vol. 205,},
    SERIES = {Mathematical Surveys and Monographs},
 PUBLISHER = {American Mathematical Society, Providence, {RI}},
      YEAR = {2015},
      ISBN = {978-1-4704-2024-6},
   MRCLASS = {18D10 (16T05)},
  MRNUMBER = {3242743},
MRREVIEWER = {Julien\ Bichon},
       DOI = {10.1090/surv/205},
       URL = {https://doi.org/10.1090/surv/205},
}

@ARTICLE{2024arXiv240518429K,
       author = {{Kitamura}, Kan},
        title = "{Actions of tensor categories on Kirchberg algebras}",
      journal = {arXiv e-prints},
     keywords = {Mathematics - Operator Algebras, Mathematics - K-Theory and Homology, Mathematics - Quantum Algebra, Primary 18M40, Secondary 46L80, 46L37, 18D25},
         year = 2024,
        month = may,
          eid = {arXiv:2405.18429},
        pages = {arXiv:2405.18429},
          doi = {10.48550/arXiv.2405.18429},
archivePrefix = {arXiv},
       eprint = {2405.18429},
 primaryClass = {math.OA},
       adsurl = {https://ui.adsabs.harvard.edu/abs/2024arXiv240518429K},
      adsnote = {Provided by the SAO/NASA Astrophysics Data System}
}

@article {MR574038,
    AUTHOR = {Elliott, George A.},
     TITLE = {Some simple {$C\sp{\ast} $}-algebras constructed as crossed
              products with discrete outer automorphism groups},
   JOURNAL = {Publ. Res. Inst. Math. Sci.},
  FJOURNAL = {Kyoto University. Research Institute for Mathematical
              Sciences. Publications},
    VOLUME = {16},
      YEAR = {1980},
    NUMBER = {1},
     PAGES = {299--311},
      ISSN = {0034-5318},
   MRCLASS = {46L40 (22D25)},
  MRNUMBER = {574038},
MRREVIEWER = {Jonathan M. Rosenberg},
       DOI = {10.2977/prims/1195187509},
       URL = {https://doi.org/10.2977/prims/1195187509},
}

@article {MR4485960,
    AUTHOR = {Kwa\'{s}niewski, Bartosz Kosma and Meyer, Ralf},
     TITLE = {Aperiodicity: the almost extension property and uniqueness of
              pseudo-expectations},
   JOURNAL = {Int. Math. Res. Not. IMRN},
  FJOURNAL = {International Mathematics Research Notices. IMRN},
      YEAR = {2022},
    NUMBER = {18},
     PAGES = {14384--14426},
      ISSN = {1073-7928},
   MRCLASS = {46L05 (46L55)},
  MRNUMBER = {4485960},
MRREVIEWER = {Qing Meng},
       DOI = {10.1093/imrn/rnab098},
       URL = {https://doi.org/10.1093/imrn/rnab098},
}

@article {MR1344136,
    AUTHOR = {Kishimoto, Akitaka},
     TITLE = {The {R}ohlin property for automorphisms of {UHF} algebras},
   JOURNAL = {J. Reine Angew. Math.},
  FJOURNAL = {Journal f\"{u}r die Reine und Angewandte Mathematik. [Crelle's
              Journal]},
    VOLUME = {465},
      YEAR = {1995},
     PAGES = {183--196},
      ISSN = {0075-4102},
   MRCLASS = {46L40 (46L55 46L80)},
  MRNUMBER = {1344136},
MRREVIEWER = {Berndt Brenken},
       DOI = {10.1515/crll.1995.465.183},
       URL = {https://doi.org/10.1515/crll.1995.465.183},
}

@article {MR2414327,
    AUTHOR = {Katsura, Takeshi and Matui, Hiroki},
     TITLE = {Classification of uniformly outer actions of {$\mathbb Z^2$} on
              {UHF} algebras},
   JOURNAL = {Adv. Math.},
  FJOURNAL = {Advances in Mathematics},
    VOLUME = {218},
      YEAR = {2008},
    NUMBER = {3},
     PAGES = {940--968},
      ISSN = {0001-8708},
   MRCLASS = {46L55 (37A20)},
  MRNUMBER = {2414327},
MRREVIEWER = {Toshihiko Masuda},
       DOI = {10.1016/j.aim.2008.01.014},
       URL = {https://doi.org/10.1016/j.aim.2008.01.014},
}

@article {MR2644109,
    AUTHOR = {Sato, Yasuhiko},
     TITLE = {The {R}ohlin property for automorphisms of the {J}iang-{S}u
              algebra},
   JOURNAL = {J. Funct. Anal.},
  FJOURNAL = {Journal of Functional Analysis},
    VOLUME = {259},
      YEAR = {2010},
    NUMBER = {2},
     PAGES = {453--476},
      ISSN = {0022-1236},
   MRCLASS = {46L05 (46L35 46L40 46L80)},
  MRNUMBER = {2644109},
MRREVIEWER = {Hua Xin Lin},
       DOI = {10.1016/j.jfa.2010.04.006},
       URL = {https://doi.org/10.1016/j.jfa.2010.04.006},
}

@article {MR4916153,
    AUTHOR = {Evans, David E. and Jones, Corey},
     TITLE = {Quantum symmetries of noncommutative tori},
   JOURNAL = {Comm. Math. Phys.},
  FJOURNAL = {Communications in Mathematical Physics},
    VOLUME = {406},
      YEAR = {2025},
    NUMBER = {7},
     PAGES = {Paper No. 167, 23},
      ISSN = {0010-3616},
   MRCLASS = {18M20 (46L87 81R50 81R60)},
  MRNUMBER = {4916153},
       DOI = {10.1007/s00220-025-05337-y},
       URL = {https://doi.org/10.1007/s00220-025-05337-y},
}

@ARTICLE{2023arXiv231018125G,
       author = {Gir{\'o}n Pacheco, Sergio and {Neagu}, Robert},
        title = "{An Elliott intertwining approach to classifying actions of C$^*$-tensor categories}",
      journal = {arXiv e-prints},
     keywords = {Mathematics - Operator Algebras, Mathematics - Quantum Algebra, 46L35, 46L37, 46L55, 18D10},
         year = 2023,
        month = oct,
          eid = {arXiv:2310.18125},
        pages = {arXiv:2310.18125},
          doi = {10.48550/arXiv.2310.18125},
archivePrefix = {arXiv},
       eprint = {2310.18125},
 primaryClass = {math.OA},
       adsurl = {https://ui.adsabs.harvard.edu/abs/2023arXiv231018125G},
      adsnote = {Provided by the SAO/NASA Astrophysics Data System}
}

@ARTICLE{2024arXiv240114238E,
       author = {{Evington}, Samuel and Gir{\'o}n Pacheco, Sergio and {Jones}, Corey},
        title = "{Equivariant $\mathcal{D}$-stability for Actions of Tensor Categories}",
      journal = {arXiv e-prints (To appear in Journal of Operator Theory)},
     keywords = {Mathematics - Operator Algebras, Mathematics - Quantum Algebra, 46L35, 46L37, 46L55, 18D10},
         year = {2024},
        month = {jan},
          eid = {arXiv:2401.14238},
archivePrefix = {arXiv},
       eprint = {2401.14238},
       adsurl = {https://ui.adsabs.harvard.edu/abs/2024arXiv240114238E},
      adsnote = {Provided by the SAO/NASA Astrophysics Data System}
}

@article {MR454645,
    AUTHOR = {Brown, Lawrence G.},
     TITLE = {Stable isomorphism of hereditary subalgebras of
              {$C\sp*$}-algebras},
   JOURNAL = {Pacific J. Math.},
  FJOURNAL = {Pacific Journal of Mathematics},
    VOLUME = {71},
      YEAR = {1977},
    NUMBER = {2},
     PAGES = {335--348},
      ISSN = {0030-8730,1945-5844},
   MRCLASS = {46L05},
  MRNUMBER = {454645},
MRREVIEWER = {Jun\ Tomiyama},
       URL = {http://projecteuclid.org/euclid.pjm/1102811431},
}

@article {MR1174483,
    AUTHOR = {Zhang, Shuang},
     TITLE = {Certain {$C^\ast$}-algebras with real rank zero and their
              corona and multiplier algebras. {I}},
   JOURNAL = {Pacific J. Math.},
  FJOURNAL = {Pacific Journal of Mathematics},
    VOLUME = {155},
      YEAR = {1992},
    NUMBER = {1},
     PAGES = {169--197},
      ISSN = {0030-8730,1945-5844},
   MRCLASS = {46L80 (19K14 19K99 46L05)},
  MRNUMBER = {1174483},
MRREVIEWER = {J.\ A.\ Mingo},
       URL = {http://projecteuclid.org/euclid.pjm/1102635475},
}
% common bib file
%% if required, the content of .bbl file can be included here once bbl is generated
%\input sn-article.bbl

\end{document}